\numberwithin{equation}{section}
\begin{document}

\title{Multilinear $\theta$-type Calder\'on--Zygmund operators and their commutators on products of weighted amalgam spaces}
\author{Xia Han and Hua Wang\footnote{E-mail address: 1044381894@qq.com, wanghua@pku.edu.cn.} \\
\textbf{Dedicated to the memory of Li Xue}.\\
\footnotesize{School of Mathematics and System Sciences, Xinjiang University, Urumqi 830046, P. R. China}}
\date{}
\maketitle

\begin{abstract}
In this paper, we first introduce several new classes of weighted amalgam spaces. Then we discuss both strong type and weak type estimates for certain multilinear $\theta$-type Calder\'on--Zygmund operators $T_\theta$ recently introduced in the literature on products of these spaces with multiple weights. Furthermore, the strong type and weak end-point estimates for both multilinear commutators and iterated commutators of $T_\theta$ and pointwise multiplication with BMO functions are established as well.\\
MSC(2020): 42B20; 42B25; 47B38; 47G10\\
Keywords: Multilinear $\theta$-type Calder\'on--Zygmund operators; multilinear commutators; iterated commutators; weighted amalgam spaces; multiple weights; Orlicz spaces
\end{abstract}
\tableofcontents

\section{Introduction}
\label{sec1}
In this paper, the symbols $\mathbb R$ and $\mathbb N$ stand for the sets of all real numbers and natural numbers, respectively. Let $\mathbb R^n$ be the $n$-dimensional Euclidean space of points $x=(x_1,x_2,\dots,x_n)$ with norm $|x|=(\sum_{i=1}^n x_i^2)^{1/2}$. Let $m\in\mathbb N$ and $(\mathbb R^n)^m=\overbrace{\mathbb R^n\times\cdots\times\mathbb R^n}^m$ be the $m$-fold product space. We use the standard notation $\mathcal{S}(\mathbb R^n)$ for the Schwartz space of test functions on $\mathbb R^n$ and $\mathcal{S}'(\mathbb R^n)$ its dual, the space of tempered distributions on $\mathbb R^n$. Calder\'on--Zygmund singular integral operators and their generalizations on the Euclidean space $\mathbb R^n$ have been extensively studied in the literature, see for example \cite{duoand,grafakos2,stein1,stein2,yabuta} for the standard theory. In particular, Yabuta \cite{yabuta} introduced certain $\theta$-type Calder\'on--Zygmund operators to facilitate his study of certain classes of pseudo-differential operators. Following the terminology of Yabuta \cite{yabuta}, we introduce the so-called $\theta$-type Calder\'on--Zygmund operators as follows.

\newtheorem{defn}{Definition}[section]

\begin{defn}
Let $\theta$ be a nonnegative, nondecreasing function on $\mathbb R^+:=(0,+\infty)$ with $0<\theta(1)<\infty$ and
\begin{equation*}
\int_0^1\frac{\theta(t)}{t}\,dt<\infty.
\end{equation*}
A measurable function $K(x,y)$ on $\mathbb R^n\times\mathbb R^n\setminus\{(x,y):x=y\}$ is said to be a $\theta$-type Calder\'on--Zygmund kernel, if there exists a constant $A>0$ such that
\begin{itemize}
  \item $\big|K(x,y)\big|\leq \frac{A}{|x-y|^{n}}$,\quad for any $x\neq y;$
  \item $\big|K(x,y)-K(z,y)\big|+\big|K(y,x)-K(y,z)\big|\leq \frac{A}{|x-y|^{n}}\cdot\theta\Big(\frac{|x-z|}{|x-y|}\Big)$, \quad for $|x-z|<\frac{|x-y|}{2}$.
\end{itemize}
\end{defn}

\begin{defn}
Let ${\mathcal T}_\theta$ be a linear operator from $\mathcal S(\mathbb R^n)$ into its dual ${\mathcal S}'(\mathbb R^n)$. We say that ${\mathcal T}_\theta$ is a $\theta$-type Calder\'on--Zygmund operator with associated kernel $K$, if
\begin{itemize}
  \item ${\mathcal T}_\theta$ can be extended to be a bounded linear operator on $L^2(\mathbb R^n);$
  \item for any $f\in C^\infty_0(\mathbb R^n)$ and for all $x\notin\mathrm{supp\,}f$, there is a $\theta$-type Calder\'on--Zygmund kernel $K(x,y)$ such that
\begin{equation*}
{\mathcal T}_\theta f(x):=\int_{\mathbb R^n}K(x,y)f(y)\,dy,
\end{equation*}
where $C^\infty_0(\mathbb R^n)$ is the space consisting of all infinitely differentiable functions on $\mathbb R^n$ with compact support.
\end{itemize}
\end{defn}
Note that the classical Calder\'on--Zygmund operator with standard kernel (see \cite{duoand,stein1}) is a special case of $\theta$-type operator ${\mathcal T}_{\theta}$ when $\theta(t)=t^{\delta}$ with $0<\delta\leq1$.

In 2009, Maldonado and Naibo \cite{ma} considered the bilinear $\theta$-type Calder\'on--Zygmund operators which are natural generalizations of the linear case, and established weighted norm inequalities for bilinear $\theta$-type Calder\'on--Zygmund operators on products of weighted Lebesgue spaces with Muckenhoupt weights. Moreover, they applied these operators to the study of certain paraproducts and bilinear pseudo-differential operators with mild regularity. Later, in 2014, Lu and Zhang \cite{lu} introduced the general $m$-linear $\theta$-type Calder\'on--Zygmund operators and their commutators for $m\geq2$, and established boundedness properties of these multilinear operators and multilinear commutators on products of weighted Lebesgue spaces with multiple weights. In addition, as applications, Lu and Zhang \cite{lu} obtained multiple-weighted norm inequalities for the paraproducts and bilinear pseudo-differential operators with mild regularity and their commutators as well. Following \cite{lu}, we now give the definition of the multilinear $\theta$-type Calder\'on--Zygmund operators.
\begin{defn}
Let $\theta$ be a nonnegative, nondecreasing function on $\mathbb R^+$ with $0<\theta(1)<\infty$ and
\begin{equation}\label{theta1}
\int_0^1\frac{\theta(t)}{t}\,dt<\infty.
\end{equation}
A measurable function $K(x,y_1,\ldots,y_m)$, defined away from the diagonal $x=y_1=\cdots=y_m$ in $(\mathbb R^n)^{m+1}$, is called an $m$-linear $\theta$-type Calder\'on--Zygmund kernel, if there exists a constant $A>0$ such that
\begin{itemize}
  \item for all $(x,y_1,\dots,y_m)\in(\mathbb R^n)^{m+1}$ with $x\neq y_k$ for some $k\in\{1,2,\dots,m\}$,
\begin{equation}\label{size}
\big|K(x,y_1,\ldots,y_m)\big|\leq\frac{A}{(|x-y_1|+\cdots+|x-y_m|)^{mn}}
\end{equation}
and
  \item for all $x,x'\in \mathbb R^n$,
\begin{equation}\label{regular1}
\begin{split}
&\big|K(x,y_1,\dots,y_m)-K(x',y_1,\dots,y_m)\big|\\
\leq& \frac{A}{(|x-y_1|+\cdots+|x-y_m|)^{mn}}\cdot\theta\bigg(\frac{|x-x'|}{|x-y_1|+\cdots+|x-y_m|}\bigg)
\end{split}
\end{equation}
whenever $|x-x'|\leq\frac{\,1\,}{2}\max_{1\leq i\leq m}|x-y_i|$, and
  \item for each fixed $k$ with $1\leq k\leq m$,
\begin{equation}\label{regular2}
\begin{split}
&\big|K(x,y_1,\dots,y_k,\dots,y_m)-K(x,y_1,\dots,y'_k,\dots,y_m)\big|\\
\leq& \frac{A}{(|x-y_1|+\cdots+|x-y_m|)^{mn}}\cdot\theta\bigg(\frac{|y_k-y'_k|}{|x-y_1|+\cdots+|x-y_m|}\bigg)
\end{split}
\end{equation}
whenever $|y_k-y'_k|\leq\frac{\,1\,}{2}\max_{1\leq i\leq m}|x-y_i|$.
\end{itemize}
\end{defn}

\begin{defn}
Let $m\in\mathbb N$ and $T_{\theta}$ be an $m$-linear operator initially defined on the $m$-fold product of Schwartz spaces
and taking values into the space of tempered distributions, i.e.,
\begin{equation*}
T_{\theta}:\overbrace{\mathcal S(\mathbb R^n)\times\cdots\times\mathcal S(\mathbb R^n)}^m\longrightarrow{\mathcal S}'(\mathbb R^n).
\end{equation*}
We say that $T_\theta$ is an $m$-linear $\theta$-type Calder\'on--Zygmund operator, if
\begin{itemize}
  \item $T_\theta$ can be extended to be a bounded multilinear operator from $L^{q_1}(\mathbb R^n)\times\cdots\times L^{q_m}(\mathbb R^n)$ into $L^q(\mathbb R^n)$ for some $q_1,\ldots,q_m\in[1,\infty)$ and $q\in[1/m,\infty)$ with $1/q=\sum_{k=1}^m 1/{q_k};$
  \item for any given $m$-tuples $\vec{f}=(f_1,\ldots,f_m)$, there is an $m$-linear $\theta$-type Calder\'on--Zygmund kernel $K(x,y_1,\dots,y_m)$(the conditions \eqref{size}, \eqref{regular1} and \eqref{regular2} are satisfied) such that
\begin{equation*}
\begin{split}
T_\theta(\vec{f})(x)&=T_\theta(f_1,\dots,f_m)(x)\\
&:=\int_{(\mathbb R^n)^m}K(x,y_1,\dots,y_m)f_1(y_1)\cdots f_m(y_m)\,dy_1\cdots dy_m
\end{split}
\end{equation*}
whenever $x\notin\bigcap_{k=1}^m\mathrm{supp\,}f_k$ and each $f_k\in C^\infty_0(\mathbb R^n)$ for $k=1,2,\dots,m$.
\end{itemize}
\end{defn}

\newtheorem{theorem}{Theorem}[section]

\newtheorem{corollary}{Corollary}[section]

\newtheorem{rek}{Remark}[section]

\newtheorem{lemma}{Lemma}[section]

We note that, if we simply take $\theta(t)=t^{\varepsilon}$ for some $0<\varepsilon\leq1$, then the multilinear $\theta$-type operator $T_{\theta}$ is exactly the multilinear Calder\'on--Zygmund singular integral operator, which was systematically studied by many authors. There is a vast literature of results of this nature, originated from the work of Grafakos and Torres \cite{grafakos}. The theory of multilinear Calder\'on--Zygmund singular integral operators plays an important role in harmonic analysis and other fields, this direction of research has been attracting a lot of attention in the last two decades, we refer the reader to \cite{grafakos,lerner,perez3} and the references therein for the standard theory of multilinear Calder\'on--Zygmund singular integrals. In 2014, Lu and Zhang \cite{lu} investigated weighted norm inequalities of multilinear $\theta$-type Calder\'on--Zygmund operators and their commutators with BMO functions. The following weighted strong-type and weak-type estimates of multilinear $\theta$-type Calder\'on--Zygmund operators on products of weighted Lebesgue spaces were given by Lu and Zhang in \cite[Theorem 1.2]{lu}.

\begin{theorem}[\cite{lu}]\label{strong}
Let $m\in\mathbb N$ and $T_{\theta}$ be an $m$-linear $\theta$-type Calder\'on--Zygmund operator with $\theta$ satisfying the condition \eqref{theta1}. If $p_1,\ldots,p_m\in(1,\infty)$ and $p\in(1/m,\infty)$ with $1/p=\sum_{k=1}^m 1/{p_k}$, and $\vec{w}=(w_1,\ldots,w_m)$ satisfies the multilinear $A_{\vec{P}}$ condition, then there exists a constant $C>0$ independent of $\vec{f}=(f_1,\ldots,f_m)$ such that
\begin{equation*}
\big\|T_{\theta}(\vec{f})\big\|_{L^p(\nu_{\vec{w}})}\leq C\prod_{k=1}^m\big\|f_k\big\|_{L^{p_k}(w_k)},
\end{equation*}
where $\nu_{\vec{w}}=\prod_{k=1}^m w_k^{p/{p_k}}$.
\end{theorem}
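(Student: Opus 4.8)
The plan is to follow the by-now standard route for multiple-weight bounds of multilinear singular integrals, reducing the estimate to two ingredients: a Fefferman--Stein sharp maximal function inequality for $T_\theta$, and the mapping properties of the multi(sub)linear maximal operator of Lerner--Ombrosi--P\'erez--Torres--Trujillo. Write $\mathcal{M}(\vec f)(x)=\sup_{Q\ni x}\prod_{k=1}^m\frac{1}{|Q|}\int_Q|f_k(y_k)|\,dy_k$, and for $0<\delta<\infty$ set $M_\delta g=\big(M(|g|^\delta)\big)^{1/\delta}$ and $M_\delta^\sharp g=\big(M^\sharp(|g|^\delta)\big)^{1/\delta}$, where $M$ and $M^\sharp$ denote the Hardy--Littlewood and Fefferman--Stein sharp maximal operators.

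First I would check that the hypotheses on $T_\theta$ --- the a priori $L^{q_1}\times\cdots\times L^{q_m}\to L^{q}$ boundedness together with the kernel conditions \eqref{size}, \eqref{regular1}, \eqref{regular2} and the Dini-type condition \eqref{theta1} --- furnish the full multilinear Calder\'on--Zygmund theory for $T_\theta$; in particular $T_\theta$ maps $L^1(\mathbb R^n)\times\cdots\times L^1(\mathbb R^n)$ into $L^{1/m,\infty}(\mathbb R^n)$ and $L^{r_1}(\mathbb R^n)\times\cdots\times L^{r_m}(\mathbb R^n)\to L^{r}(\mathbb R^n)$ for all $r_k\in(1,\infty)$ with $1/r=\sum 1/r_k$. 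This comes from the multilinear Calder\'on--Zygmund decomposition followed by multilinear interpolation; the one place where \eqref{theta1} is used is in summing the tails $\sum_{j\geq1}\theta(2^{-j})\lesssim\int_0^1\theta(t)\,t^{-1}\,dt<\infty$ produced by \eqref{regular1} and \eqref{regular2}.

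The core of the argument is the pointwise estimate
\begin{equation*}
M_\delta^\sharp\big(T_\theta(\vec f)\big)(x)\leq C\,\mathcal{M}(\vec f)(x)\qquad(0<\delta<1/m),
\end{equation*}
valid for all $\vec f$ with each $f_k\in L^\infty$ of compact support. To prove it, fix a cube $Q\ni x$ with centre $c_Q$, put $Q^*=3\sqrt n\,Q$, split $f_k=f_k^0+f_k^\infty$ with $f_k^0=f_k\chi_{Q^*}$, and expand the product so that $T_\theta(\vec f)=T_\theta(f_1^0,\dots,f_m^0)+\sum_\alpha T_\theta(f_1^{\alpha_1},\dots,f_m^{\alpha_m})$, the sum being over $\alpha\in\{0,\infty\}^m\setminus\{(0,\dots,0)\}$. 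Choosing the subtracted constant $c=\sum_\alpha T_\theta(f_1^{\alpha_1},\dots,f_m^{\alpha_m})(c_Q)$, the all-local term is handled by Kolmogorov's inequality together with the weak $(1,\dots,1)$ bound of the previous step, giving $\big(\frac1{|Q|}\int_Q|T_\theta(f_1^0,\dots,f_m^0)|^\delta\big)^{1/\delta}\lesssim\prod_k\frac1{|Q^*|}\int_{Q^*}|f_k|\lesssim\mathcal{M}(\vec f)(x)$; for each remaining $\alpha$, since the integrand forces at least one variable outside $Q^*$ one has $|c_Q-y|\leq\frac12\max_i|c_Q-y_i|$ for $y\in Q$, so \eqref{regular1} applies, and decomposing the integration domain into the dyadic annuli $2^{j}Q^*\setminus2^{j-1}Q^*$ and summing $\sum_{j\geq1}\theta(2^{-j})<\infty$ yields $\frac1{|Q|}\int_Q\big|T_\theta(f_1^{\alpha_1},\dots,f_m^{\alpha_m})(y)-T_\theta(f_1^{\alpha_1},\dots,f_m^{\alpha_m})(c_Q)\big|\,dy\lesssim\mathcal{M}(\vec f)(x)$. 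Summing over $\alpha$ and taking the supremum over cubes $Q\ni x$ gives the claim.

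To conclude, recall that $\vec w\in A_{\vec P}$ implies $\nu_{\vec w}\in A_{mp}$ --- hence $\nu_{\vec w}\in A_\infty$ and, by openness of the Muckenhoupt classes, $\nu_{\vec w}\in A_{mp-\varepsilon}$ for some $\varepsilon>0$ --- and that $A_{\vec P}$ is precisely the class for which $\mathcal{M}\colon L^{p_1}(w_1)\times\cdots\times L^{p_m}(w_m)\to L^p(\nu_{\vec w})$ is bounded. Working first with each $f_k\in L^\infty$ of compact support, the decay $|T_\theta(\vec f)(x)|\lesssim(1+|x|)^{-mn}$ far from the supports combined with $\nu_{\vec w}\in A_{mp-\varepsilon}$ shows $M_\delta\big(T_\theta(\vec f)\big)\in L^p(\nu_{\vec w})$, which legitimizes the Fefferman--Stein inequality on $A_\infty$ weights; then
\begin{equation*}
\begin{split}
\big\|T_\theta(\vec f)\big\|_{L^p(\nu_{\vec w})}&\leq\big\|M_\delta(T_\theta(\vec f))\big\|_{L^p(\nu_{\vec w})}\leq C\big\|M_\delta^\sharp(T_\theta(\vec f))\big\|_{L^p(\nu_{\vec w})}\\
&\leq C\big\|\mathcal{M}(\vec f)\big\|_{L^p(\nu_{\vec w})}\leq C\prod_{k=1}^m\big\|f_k\big\|_{L^{p_k}(w_k)},
\end{split}
\end{equation*}
and a density argument removes the restriction on $\vec f$. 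The main obstacle is the sharp maximal estimate, and within it the mixed terms in which only some of the $f_k^{\alpha_k}$ are the global pieces: there one must verify that the relevant sums of radii are still comparable to the largest one, and that the Dini condition \eqref{theta1} --- not a H\"older bound on $\theta$ --- already makes the annular sums converge; checking the a priori finiteness needed to invoke Fefferman--Stein without circularity is the other delicate point.
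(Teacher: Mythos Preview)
The paper does not actually prove this theorem: it is quoted verbatim from \cite{lu} (Lu--Zhang) as a known input, and no argument is given here. Your sketch is nonetheless a faithful outline of the proof in \cite{lu}, which in turn adapts the Lerner--Ombrosi--P\'erez--Torres--Trujillo scheme from \cite{lerner}: establish unweighted endpoint bounds via the multilinear Calder\'on--Zygmund decomposition (using \eqref{theta1} to sum $\sum_j\theta(2^{-j})$), prove the pointwise sharp-maximal estimate $M_\delta^\sharp(T_\theta(\vec f))\lesssim\mathcal{M}(\vec f)$ for $0<\delta<1/m$, and then chain Fefferman--Stein on $A_\infty$ weights with the $A_{\vec P}$-characterizing bound for $\mathcal{M}$. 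So your proposal is correct and coincides with the intended proof from the cited source; the delicate points you flag (mixed local/global terms, a priori finiteness for Fefferman--Stein) are exactly the ones that need care there.
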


\begin{theorem}[\cite{lu}]\label{weak}
Let $m\in\mathbb N$ and $T_{\theta}$ be an $m$-linear $\theta$-type Calder\'on--Zygmund operator with $\theta$ satisfying the condition \eqref{theta1}. If $p_1,\ldots,p_m\in[1,\infty)$, $\min\{p_1,\ldots,p_m\}=1$ and $p\in[1/m,\infty)$ with $1/p=\sum_{k=1}^m 1/{p_k}$, and $\vec{w}=(w_1,\ldots,w_m)$ satisfies the multilinear $A_{\vec{P}}$ condition, then there exists a constant $C>0$ independent of $\vec{f}=(f_1,\ldots,f_m)$ such that
\begin{equation*}
\big\|T_{\theta}(\vec{f})\big\|_{WL^p(\nu_{\vec{w}})}\leq C\prod_{k=1}^m\big\|f_k\big\|_{L^{p_k}(w_k)},
\end{equation*}
where $\nu_{\vec{w}}=\prod_{k=1}^m w_k^{p/{p_k}}$. In particular, if $\vec{w}=(w_1,\ldots,w_m)$ satisfies the multilinear $A_{(1,\dots,1)}$ condition, then
\begin{equation*}
\big\|T_{\theta}(\vec{f})\big\|_{WL^{1/m}(\nu_{\vec{w}})}\leq C\prod_{k=1}^m\big\|f_k\big\|_{L^{1}(w_k)},
\end{equation*}
where $\nu_{\vec{w}}=\prod_{k=1}^m w_k^{1/{m}}$.
\end{theorem}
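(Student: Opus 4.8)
The plan is to derive the weak-type bound from a pointwise sharp maximal function estimate for $T_\theta(\vec f)$ together with the known weighted weak-type boundedness, under the $A_{\vec P}$ condition, of the multilinear maximal operator
\[
\mathcal M(\vec f)(x)=\sup_{Q\ni x}\ \prod_{k=1}^m\frac{1}{|Q|}\int_Q|f_k(y_k)|\,dy_k .
\]
By a routine approximation (take each $f_k$ bounded with compact support and, if necessary, truncate the kernel of $T_\theta$, passing to the limit at the end) we may assume that the left-hand side of the asserted inequality is a priori finite and, after normalizing, that $\|f_k\|_{L^{p_k}(w_k)}=1$ for all $k$. We shall use two facts from the multilinear Muckenhoupt theory: if $\vec w\in A_{\vec P}$ then $\nu_{\vec w}\in A_{mp}\subset A_\infty$; and if $\vec w\in A_{\vec P}$ then $\mathcal M$ maps $L^{p_1}(w_1)\times\cdots\times L^{p_m}(w_m)$ into $WL^{p}(\nu_{\vec w})$, which in the all-ones case reads $\mathcal M:L^1(w_1)\times\cdots\times L^1(w_m)\to WL^{1/m}(\nu_{\vec w})$.

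The first and most substantial step is the unweighted endpoint estimate $\|T_\theta(\vec f)\|_{WL^{1/m}(\mathbb R^n)}\le C\prod_{k=1}^m\|f_k\|_{L^1(\mathbb R^n)}$, which I would prove by a multilinear Calder\'on--Zygmund decomposition: at height $\alpha>0$ write $f_k=g_k+b_k$ with $b_k=\sum_Q b_{k,Q}$, each $b_{k,Q}$ supported in a Whitney-type cube $Q$, of zero integral, with $\|b_{k,Q}\|_{L^1}\lesssim\alpha|Q|$, and expand $T_\theta(\vec f)$ into the $2^m$ pieces $T_\theta(h_1,\dots,h_m)$, $h_k\in\{g_k,b_k\}$. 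The all-good piece is controlled by the assumed a priori $L^{q_1}\times\cdots\times L^{q_m}\to L^{q}$ boundedness and Chebyshev's inequality; every piece with at least one $b_k$ is estimated off a dilate of the relevant cubes using the size bound \eqref{size}, the regularity \eqref{regular2} in the $y_k$-variables, and the cancellation of $b_{k,Q}$, the attendant dyadic sums being summable because
\[
\sum_{j\ge1}\theta(2^{-j})\le\frac{1}{\log2}\int_0^1\frac{\theta(t)}{t}\,dt<\infty
\]
by the Dini-type hypothesis \eqref{theta1}; this is the multilinear $\theta$-type version of the classical weak $(1,\dots,1)$ endpoint and is contained in \cite{lu}.

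The second step is the pointwise inequality: for every $0<\delta<1/m$,
\[
M^{\#}_\delta\big(T_\theta(\vec f)\big)(x):=\Big(M^{\#}\big(|T_\theta(\vec f)|^\delta\big)(x)\Big)^{1/\delta}\le C\,\mathcal M(\vec f)(x),\qquad x\in\mathbb R^n .
\]
To prove it, fix a ball $B=B(x_0,r)\ni x$, split each $f_k=f_k^0+f_k^\infty$ with $f_k^0=f_k\chi_{B^{*}}$ for a fixed dilate $B^{*}\supset B$, write $T_\theta(\vec f)=T_\theta(f_1^0,\dots,f_m^0)+\sum_{\alpha\ne\mathbf 0}T_\theta(f_1^{\alpha_1},\dots,f_m^{\alpha_m})$, and subtract the (finite) value at $x_0$ of the second sum; since $0<\delta<1$ and $\big||a|^\delta-|b|^\delta\big|\le|a-b|^\delta$, it suffices to bound the corresponding $\delta$-th averages over $B$. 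The all-inner term is handled by Kolmogorov's inequality and Step~1, giving $\lesssim\prod_k|B^{*}|^{-1}\int_{B^{*}}|f_k|\lesssim\mathcal M(\vec f)(x)$; for each $\alpha\ne\mathbf 0$ one uses the regularity \eqref{regular1} in the $x$-variable, splits the integration domain into the annuli where $\max_k|x_0-y_k|\sim 2^jr$, and sums a series dominated by $\big(\sum_{j\ge1}\theta(2^{-j})\big)\mathcal M(\vec f)(x)$, finite once more by \eqref{theta1}.

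Finally, since $\nu_{\vec w}\in A_\infty$, the weak-type Fefferman--Stein inequality---obtained by the Coifman--Fefferman good-$\lambda$ method, the required a priori finiteness being supplied by the opening reduction---gives
\[
\big\|T_\theta(\vec f)\big\|_{WL^{p}(\nu_{\vec w})}\le\big\|M_\delta\big(T_\theta(\vec f)\big)\big\|_{WL^{p}(\nu_{\vec w})}\le C\,\big\|M^{\#}_\delta\big(T_\theta(\vec f)\big)\big\|_{WL^{p}(\nu_{\vec w})},
\]
and combining this with Step~2 and then the weak-type bound for $\mathcal M$ recalled above yields
\[
\big\|T_\theta(\vec f)\big\|_{WL^{p}(\nu_{\vec w})}\le C\,\big\|\mathcal M(\vec f)\big\|_{WL^{p}(\nu_{\vec w})}\le C\prod_{k=1}^m\|f_k\|_{L^{p_k}(w_k)},
\]
as desired; the stated particular case is simply $p_1=\cdots=p_m=1$, $p=1/m$, $\vec w\in A_{(1,\dots,1)}$. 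The main obstacle is Step~1, and within it the convergence of the $\theta$-series---precisely where the Dini-type condition \eqref{theta1} is indispensable; a secondary technical point is guaranteeing the a priori finiteness needed to run the good-$\lambda$ self-improvement in the last step, which is why one first reduces to nice $f_k$ and, if necessary, truncated kernels.
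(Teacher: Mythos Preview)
The paper does not supply its own proof of this theorem: it is quoted verbatim as a result of Lu and Zhang \cite{lu} (see also Remark~1.1), and is used as a black box in the proof of Theorem~\ref{mainthm:2}. Your outline is essentially the argument carried out in \cite{lu}, which in turn follows the template of Lerner et al.\ \cite{lerner}: first the unweighted weak endpoint via multilinear Calder\'on--Zygmund decomposition (the Dini series $\sum_j\theta(2^{-j})$ replacing the geometric series from the standard kernel case), then the sharp maximal control $M^{\#}_\delta(T_\theta(\vec f))\lesssim\mathcal M(\vec f)$ for $0<\delta<1/m$, and finally the $A_\infty$ good-$\lambda$/Fefferman--Stein step combined with the weak-type bound for $\mathcal M$ under $A_{\vec P}$ from \cite{lerner}. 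So your approach is correct and coincides with the source the paper cites; there is nothing further to compare within the present paper.
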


\begin{rek}
We remark that in the linear case $m=1$, the above weighted results were given by Quek and Yang in \cite{quek}. For the bilinear case $m=2$, Theorems \ref{strong} and \ref{weak} were proved by Maldonado and Naibo in \cite{ma} when some additional conditions imposed on $\theta$. And when $\theta(t)=t^{\varepsilon}$ for some $0<\varepsilon\leq1$, Theorems \ref{strong} and \ref{weak} were obtained by Lerner et al. \cite{lerner}.
\end{rek}

Next, we give the definition of the commutators for the multilinear $\theta$-type Calder\'on--Zygmund operator. Given a collection of locally integrable functions $\vec{b}=(b_1,\ldots,b_m)$, the $m$-linear commutator of $T_{\theta}$ with $\vec{b}$ is defined by
\begin{equation}\label{multicomm}
\begin{split}
\big[\Sigma\vec{b},T_\theta\big](\vec{f})(x)&=\big[\Sigma\vec{b},T_\theta\big](f_1,\ldots,f_m)(x)\\
&:=\sum_{k=1}^m\big[b_k,T_{\theta}\big]_{k}(f_1,\ldots,f_m)(x),
\end{split}
\end{equation}
where each term is the commutator of $b_k$ and $T_{\theta}$ in the $k$-th entry of $T_{\theta}$; that is
\begin{equation*}
\begin{split}
&\big[b_k,T_{\theta}\big]_{k}(f_1,\ldots,f_m)(x)\\
&=b_k(x)\cdot T_{\theta}(f_1,\ldots,f_k,\dots,f_m)(x)-T_{\theta}(f_1,\dots,b_kf_k,\dots,f_m)(x).
\end{split}
\end{equation*}
Then, at a formal level, we can see that
\begin{equation*}
\begin{split}
&\big[\Sigma\vec{b},T_\theta\big](\vec{f})(x)=\big[\Sigma\vec{b},T_\theta\big](f_1,\ldots,f_m)(x)\\
&=\int_{(\mathbb R^n)^m}\sum_{k=1}^m\big[b_k(x)-b_k(y_k)\big]K(x,y_1,\dots,y_m)f_1(y_1)\cdots f_m(y_m)\,dy_1\cdots dy_m.
\end{split}
\end{equation*}
Obviously, when $m=1$ in the above definition, this operator coincides with the linear commutator $[b,{\mathcal T}_{\theta}]$~(see \cite{liu,zhang2}), which is defined as
\begin{equation*}
[b,{\mathcal T}_{\theta}](f)=b{\mathcal T}_{\theta}(f)-{\mathcal T}_{\theta}(bf).
\end{equation*}
Let us now recall the definition of the space of $\mathrm{BMO}(\mathbb R^n)$(see \cite{duoand,john}). A locally integrable function $b(x)$ on $\mathbb R^n$ is said to be in $\mathrm{BMO}(\mathbb R^n)$ if it satisfies
\begin{equation*}
\sup_{B}\frac{1}{|B|}\int_B|b(x)-b_B|\,dx<\infty,
\end{equation*}
where the supremum is taken over all balls $B$ in $\mathbb R^n$, and $b_B$ stands for the average of $b$ over $B$, i.e.,
$b_B:=\frac{1}{|B|}\int_B b(y)\,dy$. The BMO norm of $b(x)$ is defined by
\begin{equation*}
\|b\|_*:=\sup_{B}\frac{1}{|B|}\int_B|b(x)-b_B|\,dx.
\end{equation*}
 In the multilinear setting, we say that $\vec{b}=(b_1,\ldots,b_m)\in \mathrm{BMO}^m$, if $b_k\in \mathrm{BMO}(\mathbb R^n)$ for all $k=1,2,\dots,m$. For convenience, we will use the following notation
\begin{equation*}
\big\|\vec{b}\big\|_{\mathrm{BMO}^m}:=\max_{1\leq k\leq m}\big\|b_k\big\|_{\ast},\quad\mbox{for} \; \vec{b}=(b_1,\ldots,b_m)\in \mathrm{BMO}^m.
\end{equation*}
In 2014, Lu and Zhang \cite[Theorems 1.3 and 1.4]{lu} also proved some weighted estimate and the $L\log L$-type estimate for multilinear commutators $\big[\Sigma\vec{b},T_\theta\big]$ defined in \eqref{multicomm} under a stronger condition \eqref{theta2} assumed on $\theta$, if $\vec{b}\in \mathrm{BMO}^m$.

\begin{theorem}[\cite{lu}]\label{comm}
Let $m\in\mathbb N$ and $\big[\Sigma\vec{b},T_\theta\big]$ be the $m$-linear commutator generated by $\theta$-type Calder\'on--Zygmund operator $T_{\theta}$ with $\vec{b}=(b_1,\ldots,b_m)\in \mathrm{BMO}^m;$ let $\theta$ satisfy
\begin{equation}\label{theta2}
\int_0^1\frac{\theta(t)\cdot(1+|\log t|)}{t}\,dt<\infty.
\end{equation}
If $p_1,\ldots,p_m\in(1,\infty)$ and $p\in(1/m,\infty)$ with $1/p=\sum_{k=1}^m 1/{p_k}$, and $\vec{w}=(w_1,\ldots,w_m)\in A_{\vec{P}}$, then there exists a constant $C>0$ independent of $\vec{b}$ and $\vec{f}=(f_1,\ldots,f_m)$ such that
\begin{equation*}
\big\|\big[\Sigma\vec{b},T_\theta\big](\vec{f})\big\|_{L^p(\nu_{\vec{w}})}\le C\cdot\big\|\vec{b}\big\|_{\mathrm{BMO}^m}\prod_{k=1}^m\big\|f_k\big\|_{L^{p_k}(w_k)},
\end{equation*}
where $\nu_{\vec{w}}=\prod_{k=1}^m w_k^{p/{p_k}}$.
\end{theorem}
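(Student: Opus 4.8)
The plan is to follow the classical route for weighted estimates of multilinear commutators: reduce the bound to a pointwise control of the Fefferman--Stein sharp maximal function of $\big[\Sigma\vec b,T_\theta\big](\vec f)$, and then combine that with the weighted Fefferman--Stein inequality, the boundedness of $T_\theta$ itself (Theorem \ref{strong}), and the boundedness of the multilinear $L\log L$-type maximal operator on weighted Lebesgue products. Throughout I write $M^\sharp_\delta f:=\big(M^\sharp(|f|^\delta)\big)^{1/\delta}$, $M_\varepsilon f:=\big(M(|f|^\varepsilon)\big)^{1/\varepsilon}$, and I let $\mathcal M_{L(\log L)}(\vec f)$ denote the multilinear maximal operator built from the Luxemburg $L\log L$-averages of the $f_k$ over cubes; recall that $\vec w\in A_{\vec P}$ with all $p_k>1$ forces $\nu_{\vec w}\in A_\infty$ and, as is well known, also guarantees $\big\|\mathcal M_{L(\log L)}(\vec f)\big\|_{L^p(\nu_{\vec w})}\lesssim\prod_{k=1}^m\|f_k\|_{L^{p_k}(w_k)}$.

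The heart of the proof is the pointwise estimate: for any $0<\delta<\varepsilon$ with $0<\delta<1/m$ there is $C>0$ such that, for all $\vec f$ bounded with compact support and all $x\in\mathbb R^n$,
\begin{equation*}
M^\sharp_\delta\big(\big[\Sigma\vec b,T_\theta\big](\vec f)\big)(x)\le C\,\big\|\vec b\big\|_{\mathrm{BMO}^m}\Big(\mathcal M_{L(\log L)}(\vec f)(x)+M_\varepsilon\big(T_\theta(\vec f)\big)(x)\Big).
\end{equation*}
By linearity one reduces to a single term $[b_k,T_\theta]_k(\vec f)$ and, fixing a cube $Q\ni x$ and subtracting the constant $b_k(x)$, writes $[b_k,T_\theta]_k(\vec f)=(b_k(x)-b_k(\cdot))T_\theta(\vec f)-T_\theta(f_1,\dots,(b_k-b_k(x))f_k,\dots,f_m)$. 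Splitting each $f_j=f_j^0+f_j^\infty$ with $f_j^0=f_j\chi_{3Q}$ and expanding the $m$-fold product, the mean oscillation of $[b_k,T_\theta]_k(\vec f)$ over $Q$ (with a suitably chosen subtracted constant) is dominated by three kinds of terms: (i) the contribution of $(b_k(x)-b_k(\cdot))T_\theta(\vec f)$, handled by H\"older's inequality in $y$ together with the John--Nirenberg inequality, giving $\|b_k\|_*\,M_\varepsilon(T_\theta\vec f)(x)$; (ii) the fully local term $T_\theta\big(\dots,(b_k-b_k(x))f_k^0,\dots,f_m^0\big)$, handled by a Kolmogorov inequality together with the unweighted weak-type endpoint bound for $T_\theta$ (Theorem \ref{weak}) and the generalized H\"older inequality in Orlicz spaces $\|fg\|_{L^1(Q)}\lesssim\|f\|_{\exp L,Q}\|g\|_{L\log L,Q}$, which produces $\|\vec b\|_{\mathrm{BMO}^m}\mathcal M_{L(\log L)}(\vec f)(x)$; (iii) the remaining $2^m-1$ ``mixed'' terms in which at least one $f_j$ is replaced by its global part $f_j^\infty$, where one subtracts the value of the kernel at the center of $Q$, invokes the smoothness estimate \eqref{regular1} (and the size estimate \eqref{size}), sums over the dyadic annuli $3\cdot2^{\ell}Q\setminus3\cdot2^{\ell-1}Q$, and on the $\ell$-th annulus absorbs the factor $\big|(b_k)_{3\cdot2^\ell Q}-b_k(x)\big|\lesssim \ell\,\|b_k\|_*$ against the gain $\theta(2^{-\ell})$; since $L\log L$-averages again appear at each scale, this contributes $\|\vec b\|_{\mathrm{BMO}^m}\mathcal M_{L(\log L)}(\vec f)(x)$ as well.

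Granting the pointwise estimate, one finishes as follows. After a routine truncation/limiting argument one may assume each $f_k$ is bounded with compact support, so that $\big[\Sigma\vec b,T_\theta\big](\vec f)$ lies in $L^{p_0}(\nu_{\vec w})$ for some $p_0>0$ and the weighted Fefferman--Stein inequality (valid since $\nu_{\vec w}\in A_\infty$) applies, yielding
\begin{equation*}
\big\|\big[\Sigma\vec b,T_\theta\big](\vec f)\big\|_{L^p(\nu_{\vec w})}\lesssim\big\|M^\sharp_\delta\big(\big[\Sigma\vec b,T_\theta\big](\vec f)\big)\big\|_{L^p(\nu_{\vec w})}.
\end{equation*}
Taking the $L^p(\nu_{\vec w})$ norm of the pointwise bound and using, on the one hand, that for $\varepsilon$ small enough $\nu_{\vec w}\in A_{p/\varepsilon}$ so that $M_\varepsilon$ is bounded on $L^p(\nu_{\vec w})$, combined with Theorem \ref{strong} for $\|T_\theta(\vec f)\|_{L^p(\nu_{\vec w})}$, and on the other hand the stated boundedness of $\mathcal M_{L(\log L)}$ on $L^{p_1}(w_1)\times\cdots\times L^{p_m}(w_m)$, we obtain $\big\|\big[\Sigma\vec b,T_\theta\big](\vec f)\big\|_{L^p(\nu_{\vec w})}\lesssim\|\vec b\|_{\mathrm{BMO}^m}\prod_{k=1}^m\|f_k\|_{L^{p_k}(w_k)}$, as desired.

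The main obstacle is step (iii) of the pointwise estimate: one must organize the genuinely multilinear sum over all mixed local/global configurations and, on each dyadic shell, extract the decay $\theta(2^{-\ell})$ from \eqref{regular1} while simultaneously carrying along the BMO factor that grows like $\ell$. The resulting series $\sum_{\ell\ge1}\ell\,\theta(2^{-\ell})$ converges precisely because of the strengthened Dini hypothesis \eqref{theta2} --- this is the single place where \eqref{theta2} rather than \eqref{theta1} is needed --- and the bookkeeping must be arranged so that Orlicz ($L\log L$) averages, not merely $L^1$-averages, appear on every annulus, which is what ultimately forces the $L\log L$ maximal operator into the right-hand side.
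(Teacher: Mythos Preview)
Your sketch is correct and follows the sharp-maximal-function route of Lerner et al., which is indeed the method used in the cited source \cite{lu} from which Theorem~\ref{comm} is quoted; the outline is accurate, including the identification of the single place where \eqref{theta2} enters (the convergence of $\sum_{\ell\ge1}\ell\,\theta(2^{-\ell})$ on the dyadic annuli in step~(iii)).

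Note, however, that the present paper does not re-prove Theorem~\ref{comm}; it merely cites it. What the paper does prove is Theorem~\ref{comm2}, which obtains the same conclusion under the \emph{weaker} Dini condition \eqref{theta1}, by a genuinely different argument: the Cauchy-integral trick of Alvarez--Bagby--Kurtz--P\'erez writes $b_k(x)-b_k(y_k)=\frac{1}{2\pi}\int_0^{2\pi}e^{e^{i\varphi_k}[b_k(x)-b_k(y_k)]}e^{-i\varphi_k}\,d\varphi_k$, so that the commutator becomes an average over $\varphi$ of $T_\theta$ applied to exponentially twisted inputs against a twisted weight $\nu_{\vec w}\prod_k e^{p\cos\varphi_k b_k}$. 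One then verifies, via the self-improvement property of $A_{\vec P}$ and the fact that $e^{\eta b}\in A_{p_k}$ for $\eta$ small, that these twisted weight vectors remain in $A_{\vec P}$, and concludes by Theorem~\ref{strong} and Stein--Weiss interpolation with change of measure. That argument never opens the kernel on dyadic shells, hence never produces the linear factor $\ell$, which is precisely why it bypasses \eqref{theta2}. Your approach is the natural direct one and is what \cite{lu} does; the paper's own contribution in Section~\ref{sec5} is exactly to circumvent it.
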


\begin{theorem}[\cite{lu}]\label{Wcomm}
Let $m\in\mathbb N$ and $\big[\Sigma\vec{b},T_\theta\big]$ be the $m$-linear commutator generated by $\theta$-type Calder\'on--Zygmund operator $T_{\theta}$ with $\vec{b}=(b_1,\ldots,b_m)\in\mathrm{BMO}^m;$ let $\theta$ satisfy the condition \eqref{theta2}. If $p_k=1$, $k=1,2,\dots,m$ and $\vec{w}=(w_1,\ldots,w_m)\in A_{(1,\dots,1)}$, then for any given $\lambda>0$, there exists a constant $C>0$ independent of $\vec{f}=(f_1,\ldots,f_m)$ and $\lambda$ such that
\begin{equation*}
\begin{split}
&\nu_{\vec{w}}\Big(\Big\{x\in\mathbb R^n:\big|\big[\Sigma\vec{b},T_\theta\big](\vec{f})(x)\big|>\lambda^m\Big\}\Big)\\
&\leq C\cdot\Phi\big(\big\|\vec{b}\big\|_{\mathrm{BMO}^m}\big)^{1/m}
\prod_{k=1}^m\bigg(\int_{\mathbb R^n}\Phi\bigg(\frac{|f_k(x)|}{\lambda}\bigg)w_k(x)\,dx\bigg)^{1/m},
\end{split}
\end{equation*}
where $\nu_{\vec{w}}=\prod_{k=1}^m w_k^{1/{m}}$, $\Phi(t):=t\cdot(1+\log^+t)$ and $\log^+t:=\max\{\log t,0\}$.
\end{theorem}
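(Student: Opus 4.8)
\emph{Plan.} I would follow the template used by Pérez for linear commutators and by Lerner--Ombrosi--Pérez--Torres--Trujillo-González in the multilinear Calderón--Zygmund theory, the new feature being that the sharper regularity \eqref{theta2} has to be invoked exactly where the $\mathrm{BMO}$ estimates generate a logarithm. First I would reduce to a single commutator: since $[\Sigma\vec b,T_\theta](\vec f)=\sum_{k=1}^m[b_k,T_\theta]_k(\vec f)$, the level set $\{|[\Sigma\vec b,T_\theta](\vec f)|>\lambda^m\}$ is contained in $\bigcup_{k}\{|[b_k,T_\theta]_k(\vec f)|>(\lambda/m^{1/m})^m\}$, so by subadditivity of $\nu_{\vec w}$, the submultiplicativity $\Phi(st)\le\Phi(s)\Phi(t)$ and the monotonicity of $\Phi$ it is enough to treat one commutator $[b_j,T_\theta]_j$, with $\|b_j\|_*\le\|\vec b\|_{\mathrm{BMO}^m}$ replacing $\|\vec b\|_{\mathrm{BMO}^m}$. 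Using the $m$-linearity of $T_\theta$ to rescale $f_k\mapsto f_k/\lambda$ I may take $\lambda=1$, and by density I may assume each $f_k$ is nonnegative, bounded and compactly supported; the goal becomes
\[
\nu_{\vec w}\big(\{x:|[b_j,T_\theta]_j(\vec f)(x)|>1\}\big)\le C\,\Phi(\|b_j\|_*)^{1/m}\prod_{k=1}^m\Big(\int_{\mathbb R^n}\Phi(f_k)\,w_k\Big)^{1/m}.
\]

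\emph{Calderón--Zygmund decomposition.} Next I would perform, for each $k$, a Calderón--Zygmund decomposition of $f_k$ at a height $\sim1$: disjoint dyadic cubes $\{Q_{k,i}\}_i$ with $\frac1{|Q_{k,i}|}\int_{Q_{k,i}}f_k\sim1$, $f_k\lesssim1$ a.e.\ off $\Omega_k:=\bigcup_iQ_{k,i}$, and $f_k=g_k+h_k$ where $g_k=f_k\mathbf 1_{\Omega_k^c}+\sum_i(f_k)_{Q_{k,i}}\mathbf 1_{Q_{k,i}}$ and $h_k=\sum_ih_{k,i}$ with $h_{k,i}=(f_k-(f_k)_{Q_{k,i}})\mathbf 1_{Q_{k,i}}$, $\int h_{k,i}=0$, $\mathrm{supp}\,h_{k,i}\subseteq Q_{k,i}$; convexity of $\Phi$ then gives the key bounds $\sum_i|Q_{k,i}|\le\int\Phi(f_k)$ and $\|h_{k,i}\|_{L\log L,Q_{k,i}}\,|Q_{k,i}|\lesssim\int_{Q_{k,i}}\Phi(f_k)$. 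Expanding $T_\theta$ by multilinearity writes $[b_j,T_\theta]_j(\vec f)$ as the ``all good'' term $[b_j,T_\theta]_j(g_1,\dots,g_m)$ plus finitely many terms in which at least one slot carries a bad part $h_k$, and I would estimate the $\nu_{\vec w}$-measure of each level set separately.

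\emph{Good and bad terms.} For the good term I would invoke the strong-type commutator bound of Theorem \ref{comm} with fixed exponents $p_1=\dots=p_m=p_0\in(1,\infty)$ (legitimate since $A_{(1,\dots,1)}\subseteq A_{(p_0,\dots,p_0)}$) together with Chebyshev's inequality, converting $\|g_k\|_{L^{p_0}(w_k)}$ into $\int\Phi(f_k)w_k$ via $g_k^{\,p_0}\lesssim g_k$ off $\Omega_k$, the $A_1$-property of $\nu_{\vec w}$ (which holds because $\vec w\in A_{(1,\dots,1)}$), and the weak-type endpoint bound for the multilinear maximal operator $\mathcal M(\vec f)(x)=\sup_{Q\ni x}\prod_{k=1}^m\frac1{|Q|}\int_Q|f_k|$ on $\prod_{k=1}^m L^1(w_k)$. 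For the bad terms I would first discard $E=\bigcup_k\bigcup_i cQ_{k,i}$ ($c$ a fixed dilation), bounding $\nu_{\vec w}(E)$ once more through the $A_{(1,\dots,1)}$ structure of $\vec w$ and the endpoint for $\mathcal M$; then, fixing a bad slot $k_0$ and $x\notin cQ_{k_0,i}$, I would exploit $\int h_{k_0,i}=0$ together with \eqref{regular1}--\eqref{regular2}, so that integrating the kernel difference over a dyadic annulus $2^{\ell+1}Q_{k_0,i}\setminus2^\ell Q_{k_0,i}$ produces the factor $\theta(2^{-\ell})$, while integrating the symbol $b_j-(b_j)_{Q_{k_0,i}}$ against that annulus contributes, by John--Nirenberg, the extra factor $\ell+1$ (and on $Q_{k_0,i}$ itself the Orlicz--Hölder inequality $\frac1{|Q|}\int_Q|b_j-(b_j)_Q|\,G\lesssim\|b_j\|_*\|G\|_{L\log L,Q}$ is used, which is where $\|b_j\|_*$, and after reassembly the factor $\Phi(\|b_j\|_*)^{1/m}$, enters). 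The summation in $\ell$ requires precisely $\sum_{\ell\ge1}(\ell+1)\theta(2^{-\ell})\sim\int_0^1\theta(t)(1+|\log t|)\,t^{-1}\,dt<\infty$, i.e.\ hypothesis \eqref{theta2}; the other slots contribute bounded factors ($g_k$) or $L^1$-integrable ones ($h_k$), and a final application of the $A_{(1,\dots,1)}$ bound recombines everything.

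\emph{Main difficulty.} The crux is the bad-term analysis: one must order the integrations so that the \emph{single} gain $\theta(\cdot)$ from kernel regularity simultaneously absorbs the logarithm coming from the $\mathrm{BMO}$ integration — which is exactly why \eqref{theta2} rather than the weaker \eqref{theta1} is needed — and the $L\log L$ Orlicz norm of the bad part, all the while tracking the $m$ weights so that the pieces recombine into $\prod_k(\int\Phi(f_k)w_k)^{1/m}$ with the constant $\Phi(\|\vec b\|_{\mathrm{BMO}^m})^{1/m}$. This multilinear weighted bookkeeping (including coupling the decomposition to the multilinear maximal operator so that $\nu_{\vec w}(E)$ and $\|g_k\|_{L^{p_0}(w_k)}$ are controlled without assuming $w_k\in A_1$), together with establishing the auxiliary weak-type endpoint for $\mathcal M$ on products of weighted Lebesgue spaces, is what makes the argument substantially longer than its $m=1$ prototype.
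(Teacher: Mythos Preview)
The paper does not prove this statement at all: Theorem~\ref{Wcomm} is quoted verbatim from \cite{lu} (Lu--Zhang) and is used only as a black box later, in the proof of Theorem~\ref{mainthm:4}. So there is no ``paper's own proof'' to compare against. What you have written is, in outline, the standard Lerner--Ombrosi--P\'erez--Torres--Trujillo-Gonz\'alez argument from \cite{lerner}, transported to $\theta$-type kernels exactly as \cite{lu} does it; your identification of where \eqref{theta2} enters (the series $\sum_\ell(\ell+1)\theta(2^{-\ell})$ coming from coupling kernel smoothness with the John--Nirenberg logarithmic growth) is correct and is precisely the point of \cite{lu}.

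One place where your sketch is thinner than it should be is the good-part estimate. You propose to apply Theorem~\ref{comm} at exponents $p_1=\cdots=p_m=p_0>1$ and then convert $\|g_k\|_{L^{p_0}(w_k)}^{p_0}\lesssim\int g_k\,w_k$ into $\int\Phi(f_k)\,w_k$. Off $\Omega_k$ this is fine, but on the bad cubes $g_k$ equals the average $(f_k)_{Q_{k,i}}\sim 1$, so $\int_{Q_{k,i}} g_k\,w_k\sim w_k(Q_{k,i})$, and you still have to pass from $\sum_i w_k(Q_{k,i})$ back to $\int\Phi(f_k)\,w_k$. Since $\vec w\in A_{(1,\dots,1)}$ does \emph{not} force $w_k\in A_1$ (only $w_k^{1/m}\in A_1$), the obvious move $w_k(Q_{k,i})\lesssim|Q_{k,i}|\inf_{Q_{k,i}}w_k$ is unavailable. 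The actual argument in \cite{lerner,lu} gets around this by taking the $m$ Calder\'on--Zygmund decompositions simultaneously and controlling $\nu_{\vec w}$ of the union of all bad cubes via the weak-type $(1,\dots,1)\to 1/m$ bound for the multilinear maximal function $\mathcal M$; the good part is then handled with an $L^{p_0}(\nu_{\vec w})$ estimate on the complement, not with the individual $L^{p_0}(w_k)$ norms. You allude to the $\mathcal M$ endpoint, but the way you have wired it into the good term is not quite right; making this step precise is where the multilinear bookkeeping actually bites.
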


\begin{rek}
As is well known, (multilinear) commutator has a greater degree of singularity than the underlying (multilinear) $\theta$-type Calder\'on--Zygmund operator, so more regular condition imposed on $\theta(t)$ is reasonable. Obviously, our condition \eqref{theta2} is slightly stronger than the condition \eqref{theta1}. For such type of commutators, the condition that $\theta(t)$ satisfying \eqref{theta2} is needed in the linear case (see \cite{liu,zhang2} for more details), so does in the multilinear case. Moreover, it is straightforward to check that when $\theta(t)=t^{\varepsilon}$ for some $\varepsilon>0$,
\begin{equation*}
\int_0^1\frac{t^{\varepsilon} \cdot(1+|\log t|)}{t}dt=\int_0^1 t^{\varepsilon-1}\cdot\left(1+\log\frac{\,1\,}{t}\right)dt<\infty.
\end{equation*}
Thus, the multilinear Calder\'on--Zygmund operator is also the multilinear $\theta$-type operator $T_{\theta}$ with $\theta(t)$ satisfying the condition \eqref{theta2}.
\end{rek}

\begin{rek}
When $m=1$, the above weighted endpoint estimate for the linear commutator $[b,\mathcal{T}_{\theta}]$ was given by Zhang and Xu in \cite{zhang2} (for the unweighted case, see \cite{liu}). Since $\mathcal{T}_{\theta}$ is bounded on $L^p(w)$ for $1<p<\infty$ and $w\in A_p$ as mentioned earlier, then by the well-known boundedness criterion for commutators of linear operators, which was obtained by Alvarez et al. in \cite{alvarez}, we know that $[b,\mathcal{T}_{\theta}]$ is also bounded on $L^p(w)$ for all $1<p<\infty$ and $w\in A_p$, whenever $b\in \mathrm{BMO}(\mathbb R^n)$. When $m\geq2$, $w_1=\cdots=w_m\equiv1$ and $\theta(t)=t^{\varepsilon}$ for some $\varepsilon>0$, P\'erez and Torres \cite{perez3} proved that if $\vec{b}=(b_1,\ldots,b_m)\in\mathrm{BMO}^m$, then
\begin{equation*}
\big[\Sigma\vec{b},T_\theta\big]:L^{p_1}(\mathbb R^n)\times\cdots\times L^{p_m}(\mathbb R^n)\longrightarrow L^p(\mathbb R^n)
\end{equation*}
for $1<p_k<\infty$ and $1<p<\infty$ with $1/p=1/{p_1}+\cdots+1/{p_m}$, where $k=1,2,\dots,m$. And when $m\geq2$ and $\theta(t)=t^{\varepsilon}$ for some $\varepsilon>0$, Theorems \ref{comm} and \ref{Wcomm} were obtained by Lerner et al. in \cite{lerner} (the range of $p$ is enlarged). Namely, Lerner et al.\cite{lerner} proved that if $\vec{b}=(b_1,\ldots,b_m)\in\mathrm{BMO}^m$ and $\vec{w}=(w_1,\ldots,w_m)\in A_{\vec{P}}$, then
\begin{equation*}
\big[\Sigma\vec{b},T_\theta\big]:L^{p_1}(w_1)\times\cdots\times L^{p_m}(w_m)\longrightarrow L^p(\nu_{\vec{w}})
\end{equation*}
for $1<p_k<\infty$ and $1/m<p<\infty$ with $1/p=1/{p_1}+\cdots+1/{p_m}$(the full range of $p$ is achieved), where $k=1,2,\dots,m$.
\end{rek}

\begin{rek}
Note that when $m=1$ and $\theta$ satisfies the condition \eqref{theta1}, the linear commutator $[b,\mathcal{T}_{\theta}]$ is bounded on $L^p(w)$ for all $1<p<\infty$ and $w\in A_p$. Thus, it is natural to ask whether the condition on $\theta(t)$ in Theorem $\ref{comm}$ can be weakened when $m\ge2$. Below, we will show that the conclusion of Theorem $\ref{comm}$ still holds provided that $\theta(t)$ only fulfills \eqref{theta1}. It should be pointed out that in the multilinear case $m\geq2$, the method used in this paper is different from the one in \cite{lu}. The idea of the proof is essentially that of \cite{alvarez,ding,perez3}.
\end{rek}
Motivated by \cite{perez} and \cite{lu}, we will consider another type of commutators on $\mathbb R^n$. Assume that $\vec{b}=(b_1,\dots,b_m)$ is a collection of locally integrable functions, we define the iterated commutator $\big[\Pi\vec{b},T_\theta\big]$ as
\begin{equation}\label{iteratedcomm}
\begin{split}
\big[\Pi\vec{b},T_\theta\big](\vec{f})(x)&=\big[\Pi\vec{b},T_\theta\big](f_1,\ldots,f_m)(x)\\
&:=[b_1,[b_2,\dots[b_{m-1},[b_m,T_{\theta}]_m]_{m-1}\dots]_2]_1(f_1,\ldots,f_m)(x),
\end{split}
\end{equation}
where
\begin{equation*}
\begin{split}
&\big[b_k,T_{\theta}\big]_{k}(f_1,\ldots,f_m)(x)\\
&=b_k(x)\cdot T_{\theta}(f_1,\ldots,f_k,\dots,f_m)(x)-T_{\theta}(f_1,\dots,b_kf_k,\dots,f_m)(x).
\end{split}
\end{equation*}
Then $\big[\Pi\vec{b},T_\theta\big]$ can be expressed in the following way:
\begin{align}\label{iteratedc}
&\big[\Pi\vec{b},T_\theta\big](\vec{f})(x)=\big[\Pi\vec{b},T_\theta\big](f_1,\ldots,f_m)(x)\\
&=\int_{(\mathbb R^n)^m}\prod_{k=1}^m\big[b_k(x)-b_k(y_k)\big]K(x,y_1,\dots,y_m)f_1(y_1)\cdots f_m(y_m)\,dy_1\cdots dy_m\notag.
\end{align}
Following the arguments used in \cite{perez} and \cite{lu} with some minor modifications, we can also establish the corresponding results (strong type and weak endpoint estimates) for iterated commutators $\big[\Pi\vec{b},T_\theta\big]$ defined in \eqref{iteratedcomm} under a stronger condition \eqref{theta3} imposed on $\theta$, if $\vec{b}\in \mathrm{BMO}^m$.

\begin{theorem}\label{commwh}
Let $m\in\mathbb N$ and $\big[\Pi\vec{b},T_\theta\big]$ be the iterated commutator generated by $\theta$-type Calder\'on--Zygmund operator $T_{\theta}$ and $\vec{b}=(b_1,\ldots,b_m)\in\mathrm{BMO}^m;$ let $\theta$ satisfy
\begin{equation}\label{theta3}
\int_0^1\frac{\theta(t)\cdot(1+|\log t|^m)}{t}\,dt<\infty.
\end{equation}
If $p_1,\ldots,p_m\in(1,\infty)$ and $p\in(1/m,\infty)$ with $1/p=\sum_{k=1}^m 1/{p_k}$, and $\vec{w}=(w_1,\ldots,w_m)\in A_{\vec{P}}$, then there exists a constant $C>0$ independent of $\vec{b}$ and $\vec{f}=(f_1,\ldots,f_m)$ such that
\begin{equation*}
\big\|\big[\Pi\vec{b},T_\theta\big](\vec{f})\big\|_{L^p(\nu_{\vec{w}})}\leq C\cdot\prod_{k=1}^m\big\|b_k\big\|_{*}\prod_{k=1}^m\big\|f_k\big\|_{L^{p_k}(w_k)},
\end{equation*}
where $\nu_{\vec{w}}=\prod_{k=1}^m w_k^{p/{p_k}}$.
\end{theorem}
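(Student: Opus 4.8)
The plan is to derive the estimate from a pointwise sharp maximal inequality, the Fefferman--Stein inequality, and a finite induction on the number of symbols $b_k$ actually present in the commutator. Since $\vec w\in A_{\vec P}$ forces $\nu_{\vec w}\in A_{mp}\subseteq A_\infty$, the Fefferman--Stein inequality is available; after the routine reduction to the case where each $b_k\in L^\infty$ and each $f_k$ is bounded with compact support (so that the left-hand side is finite a priori), it will suffice to bound $\big\|M^{\sharp}_\delta\big([\Pi\vec b,T_\theta](\vec f)\big)\big\|_{L^p(\nu_{\vec w})}$, where $M^{\sharp}_\delta g=\big(M^{\sharp}(|g|^\delta)\big)^{1/\delta}$ and $\delta$ is fixed with $0<\delta<1/m$. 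Here $M^{\sharp}$ is the usual sharp maximal function and, below, $M_\varepsilon g=\big(M(|g|^\varepsilon)\big)^{1/\varepsilon}$ with $M$ the Hardy--Littlewood maximal operator.

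The heart of the matter is the pointwise estimate
\[
M^{\sharp}_\delta\big([\Pi\vec b,T_\theta](\vec f)\big)(x)\le C\prod_{k=1}^m\|b_k\|_*\,\mathcal M_{L(\log L)}(\vec f)(x)+C\sum_{\sigma\subsetneq\{1,\dots,m\}}\Big(\prod_{k\notin\sigma}\|b_k\|_*\Big)M_\varepsilon\big([\Pi\vec b_\sigma,T_\theta]_\sigma(\vec f)\big)(x),
\]
valid for suitable $\delta<\varepsilon<1/m$, in which $[\Pi\vec b_\sigma,T_\theta]_\sigma$ denotes the iterated commutator formed only in the entries indexed by $\sigma$ (with the convention $[\Pi\vec b_\emptyset,T_\theta]_\emptyset:=T_\theta$) and $\mathcal M_{L(\log L)}$ is the multi-sublinear maximal operator built from the Young function $\Phi(t)=t(1+\log^+t)$. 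To prove it I would fix a cube $Q\ni x$, a suitably large dilate $Q^*$ of $Q$, set $\lambda_k=(b_k)_{Q^*}$, use the invariance $[b_k-\lambda_k,T_\theta]_k=[b_k,T_\theta]_k$, and expand, for $z\in Q$, the product $\prod_{k}\big((b_k(z)-\lambda_k)-(b_k(y_k)-\lambda_k)\big)$ under the kernel integral. This writes $[\Pi\vec b,T_\theta](\vec f)(z)$ as $\pm T_\theta\big((b_1-\lambda_1)f_1,\dots,(b_m-\lambda_m)f_m\big)(z)$ plus finitely many terms of the form $\prod_{k\notin\sigma}(b_k(z)-\lambda_k)\,T_\theta(\vec g_\sigma)(z)$ with $\sigma\subsetneq\{1,\dots,m\}$, where $\vec g_\sigma$ carries $(b_k-\lambda_k)f_k$ in the entries of $\sigma$. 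For the latter terms, Hölder's inequality on $Q$ together with the John--Nirenberg inequality peels off the factor $\prod_{k\notin\sigma}\|b_k\|_*$, and re-expanding $T_\theta(\vec g_\sigma)$ back in terms of the lower iterated commutators $[\Pi\vec b_\tau,T_\theta]_\tau$ (again using constant-invariance, through a short chain of exponents $\delta<\dots<\varepsilon<1/m$) reproduces the sum on the right-hand side.

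The ``full'' term $\pm T_\theta\big((b_1-\lambda_1)f_1,\dots,(b_m-\lambda_m)f_m\big)(z)$ is where the hypothesis \eqref{theta3} enters, and this is the step I expect to be the main obstacle. Writing $f_k=f_k\chi_{Q^*}+f_k\chi_{(Q^*)^c}$ and expanding, the sub-piece with all entries $f_k\chi_{Q^*}$ is controlled by Kolmogorov's inequality, the unweighted weak $(1,\dots,1;1/m)$ estimate for $T_\theta$ contained in Theorem \ref{weak}, and the generalized Hölder inequality in Orlicz spaces $\frac{1}{|Q^*|}\int_{Q^*}|b_k-\lambda_k|\,|f_k|\le C\|b_k\|_*\|f_k\|_{L(\log L),Q^*}$, which produces $C\prod_k\|b_k\|_*\,\mathcal M_{L(\log L)}(\vec f)(x)$. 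For any sub-piece in which at least one entry is global ($f_k\chi_{(Q^*)^c}$), I would subtract the value at the center $x_0$ of $Q$ and apply the regularity estimate \eqref{regular1}; decomposing the global variables over the shells $2^{J+1}Q^*\setminus 2^JQ^*$ and placing all the outer integrations on the common cube $2^{J+1}Q^*$, the kernel difference on the $J$-th shell contributes $\theta(c2^{-J})\big(2^J\ell(Q)\big)^{-mn}$, the $m$ factors $|2^{J+1}Q^*|$ coming from the integrations exactly cancel $\big(2^J\ell(Q)\big)^{mn}$, and each telescoping sum $(b_k)_{2^{J+1}Q^*}-(b_k)_{Q^*}$ costs a factor $\lesssim(1+J)\|b_k\|_*$; hence the $J$-th shell is bounded by $C\,\theta(c2^{-J})(1+J)^m\prod_k\|b_k\|_*\,\mathcal M_{L(\log L)}(\vec f)(x)$. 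Summing in $J$ then demands precisely $\sum_{J\ge0}\theta(c2^{-J})(1+J)^m<\infty$, which is equivalent to $\int_0^1\theta(t)(1+|\log t|^m)\,t^{-1}\,dt<\infty$, i.e.\ to \eqref{theta3}. This is the only point where the strengthened condition on $\theta$ (rather than \eqref{theta1} or \eqref{theta2}) is used; when $\theta(t)=t^\varepsilon$ the series converges automatically.

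Finally I would integrate the pointwise estimate against $\nu_{\vec w}$. Since $\varepsilon<1/m$ gives $\nu_{\vec w}\in A_{mp}\subseteq A_{p/\varepsilon}$, the maximal operator $M$ is bounded on $L^{p/\varepsilon}(\nu_{\vec w})$, so $\|M_\varepsilon g\|_{L^p(\nu_{\vec w})}\le C\|g\|_{L^p(\nu_{\vec w})}$; and $\mathcal M_{L(\log L)}$ maps $L^{p_1}(w_1)\times\cdots\times L^{p_m}(w_m)$ boundedly into $L^p(\nu_{\vec w})$ (for $\vec w\in A_{\vec P}$ and every $p_k>1$ one can dominate $\mathcal M_{L(\log L)}(\vec f)$ by $\mathcal M_r(\vec f)$ for some $r>1$ and invoke the openness of the $A_{\vec P}$ condition, cf.\ \cite{lerner}). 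Combining these two facts with the Fefferman--Stein inequality yields
\[
\big\|[\Pi\vec b,T_\theta](\vec f)\big\|_{L^p(\nu_{\vec w})}\le C\prod_{k=1}^m\|b_k\|_*\prod_{k=1}^m\|f_k\|_{L^{p_k}(w_k)}+C\sum_{\sigma\subsetneq\{1,\dots,m\}}\Big(\prod_{k\notin\sigma}\|b_k\|_*\Big)\big\|[\Pi\vec b_\sigma,T_\theta]_\sigma(\vec f)\big\|_{L^p(\nu_{\vec w})}.
\]
An induction on $|\sigma|$ then closes the argument: the base case $\sigma=\emptyset$ is precisely Theorem \ref{strong}, and for $1\le|\sigma|\le m-1$ the same pointwise/Fefferman--Stein scheme applied to $[\Pi\vec b_\sigma,T_\theta]_\sigma$ (note that \eqref{theta3} implies the analogous condition with $|\sigma|$ in place of $m$), together with the inductive hypothesis, gives $\big\|[\Pi\vec b_\sigma,T_\theta]_\sigma(\vec f)\big\|_{L^p(\nu_{\vec w})}\le C\prod_{k\in\sigma}\|b_k\|_*\prod_{k=1}^m\|f_k\|_{L^{p_k}(w_k)}$. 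Taking $\sigma=\{1,\dots,m\}$ is the assertion.
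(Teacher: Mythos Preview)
Your approach is correct and is precisely the sharp-maximal-function/induction scheme of \cite{perez,lu} that the paper alludes to (without giving details) as the proof of this theorem. However, the paper itself goes further: in Theorem~\ref{comm2} it reproves the same conclusion under the \emph{weaker} hypothesis \eqref{theta1} (in place of \eqref{theta3}) by a completely different route. There each factor $b_k(x)-b_k(y_k)$ is rewritten via the Cauchy integral formula as $\frac{1}{2\pi}\int_0^{2\pi} e^{e^{i\varphi_k}[b_k(x)-b_k(y_k)]}e^{-i\varphi_k}\,d\varphi_k$, which turns $[\Pi\vec b,T_\theta](\vec f)(x)$ into an average over $[0,2\pi]^m$ of $\big(\prod_k e^{e^{i\varphi_k}b_k(x)}\big)\,T_\theta\big(e^{-e^{i\varphi_1}b_1}f_1,\dots,e^{-e^{i\varphi_m}b_m}f_m\big)(x)$. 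The self-improvement property of $A_{\vec P}$ gives $(w_1^{1+\varepsilon},\dots,w_m^{1+\varepsilon})\in A_{\vec P}$ for some $\varepsilon>0$, while for $\|b_k\|_*$ small enough $e^{\frac{1+\varepsilon}{\varepsilon}p_k\cos\varphi_k\,b_k}\in A_{p_k}$; Stein--Weiss interpolation with change of measures between the two resulting instances of Theorem~\ref{strong} then yields $T_\theta:\prod_k L^{p_k}(w_k e^{p_k\cos\varphi_k b_k})\to L^p(\nu_{\vec w}\prod_k e^{p\cos\varphi_k b_k})$, which unwinds to the desired bound (the general BMO case follows by rescaling $b_k$). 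The advantage of your method is that it also underlies the sharp endpoint $L(\log L)$-type estimate of Theorem~\ref{Wcommwh}; the advantage of the paper's Cauchy-integral method is that it bypasses the pointwise kernel analysis entirely and therefore needs only the plain Dini condition \eqref{theta1}, not the logarithmically strengthened condition \eqref{theta3} that your shell-by-shell summation $\sum_J\theta(c\,2^{-J})(1+J)^m$ forces.
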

\begin{rek}
Below, we will also show that the conclusion of Theorem $\ref{commwh}$ still holds provided that $\theta(t)$ only fulfills \eqref{theta1}.
\end{rek}

\begin{theorem}\label{Wcommwh}
Let $m\in\mathbb N$ and $\big[\Pi\vec{b},T_\theta\big]$ be the iterated commutator generated by $\theta$-type Calder\'on--Zygmund operator $T_{\theta}$ and $\vec{b}=(b_1,\ldots,b_m)\in\mathrm{BMO}^m;$ let $\theta$ satisfy the condition \eqref{theta3}.
If $p_k=1$, $k=1,2,\dots,m$ and $\vec{w}=(w_1,\ldots,w_m)\in A_{(1,\dots,1)}$, then for any given $\lambda>0$, there exists a constant $C>0$ independent of $\vec{f}=(f_1,\ldots,f_m)$ and $\lambda$ such that
\begin{equation*}
\begin{split}
&\nu_{\vec{w}}\Big(\Big\{x\in\mathbb R^n:\big|\big[\Pi\vec{b},T_\theta\big](\vec{f})(x)\big|>\lambda^m\Big\}\Big)\\
&\leq C\cdot\prod_{k=1}^m\bigg(\int_{\mathbb R^n}\Phi^{(m)}\bigg(\frac{|f_k(x)|}{\lambda}\bigg)w_k(x)\,dx\bigg)^{1/m},
\end{split}
\end{equation*}
where $\nu_{\vec{w}}=\prod_{k=1}^m w_k^{1/{m}}$, $\Phi(t):=t\cdot(1+\log^+t)$ and $\Phi^{(m)}:=\overbrace{\Phi\circ\cdots\circ\Phi}^m$.
\end{theorem}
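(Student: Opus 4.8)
The plan is to prove this weak-type endpoint bound by a Calder\'on--Zygmund decomposition, following the scheme of \cite{perez} (the iterated commutator in the case $\theta(t)=t^{\varepsilon}$) and of \cite{lu} (the sum-commutator $\big[\Sigma\vec b,T_\theta\big]$). First I would normalize: by the $m$-linearity of $\big[\Pi\vec b,T_\theta\big]$ one may replace each $f_k$ by $f_k/\lambda$, and by the homogeneity $\big[\Pi(c_1b_1,\dots,c_mb_m),T_\theta\big]=\big(\prod_{k}c_k\big)\big[\Pi\vec b,T_\theta\big]$ one may rescale the $b_k$ so that $\|b_k\|_{\ast}=1$, absorbing $\prod_k\|b_k\|_{\ast}$ into $C$. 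Hence it suffices to treat $\lambda=1$ and $\|b_k\|_{\ast}=1$ for every $k$; we may also take $f_k\ge 0$ and assume $\int_{\mathbb R^n}\Phi^{(m)}(f_k)\,w_k<\infty$ for each $k$, since otherwise there is nothing to prove. Because \eqref{theta3} is stronger than \eqref{theta2} and than \eqref{theta1}, the conclusions of Theorems \ref{strong}, \ref{weak}, \ref{comm} and \ref{commwh} are all at our disposal.

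Next, for each $k\in\{1,\dots,m\}$ I would run the Calder\'on--Zygmund decomposition of $f_k$ at height $1$, obtaining pairwise disjoint dyadic cubes $\{Q_{k,j}\}_j$ with $1<\frac{1}{|Q_{k,j}|}\int_{Q_{k,j}}f_k\le 2^n$, with $f_k\le 1$ a.e.\ on $\mathbb R^n\setminus\Omega_k$ (where $\Omega_k:=\bigcup_jQ_{k,j}$), and the splitting $f_k=g_k+h_k$ with $0\le g_k\le 2^n$, $h_k=\sum_jh_{k,j}$, $\mathrm{supp}\,h_{k,j}\subseteq Q_{k,j}$, $\int_{\mathbb R^n}h_{k,j}=0$ and $\frac{1}{|Q_{k,j}|}\int_{Q_{k,j}}|h_{k,j}|\lesssim 1$. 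Expanding $f_k=g_k+h_k$ in each of the $m$ entries breaks $\big[\Pi\vec b,T_\theta\big](\vec f)$ into $2^m$ terms $\big[\Pi\vec b,T_\theta\big](f_1^{\sigma_1},\dots,f_m^{\sigma_m})$ with $\sigma\in\{g,h\}^m$ ($f_k^{g}=g_k$, $f_k^{h}=h_k$); since one of the $2^m$ summands of a sum of modulus $>1$ must have modulus $>2^{-m}$, it is enough to bound $\nu_{\vec w}(\{x:|\big[\Pi\vec b,T_\theta\big](f_1^{\sigma_1},\dots,f_m^{\sigma_m})(x)|>2^{-m}\})$ by $C\prod_{k=1}^m\big(\int_{\mathbb R^n}\Phi^{(m)}(f_k)w_k\big)^{1/m}$ for each fixed $\sigma$.

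For the all-good term $\sigma=(g,\dots,g)$ I would use the standard containment $A_{(1,\dots,1)}\subseteq A_{(m,\dots,m)}$ and apply Theorem \ref{commwh} with $p_1=\dots=p_m=m$, $p=1$; together with Chebyshev's inequality this controls the level set by $C\prod_k\|g_k\|_{L^m(w_k)}\lesssim C\prod_k(\int g_kw_k)^{1/m}$ (as $0\le g_k\le 2^n$), and for each $k$ the quantity $\int g_kw_k$ equals $\int f_kw_k$ up to a multiple of $w_k(\Omega_k)$, hence is $\lesssim\int\Phi^{(m)}(f_k)w_k$ by the weighted Calder\'on--Zygmund / multilinear maximal estimates used for Theorems \ref{weak} and \ref{Wcomm} in \cite{lerner,lu}. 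For a term with nonempty set of ``bad'' slots $B\subseteq\{1,\dots,m\}$, I would set $\Omega^{\ast}:=\bigcup_{k\in B}\bigcup_j c_nQ_{k,j}$ (the concentric $c_n$-fold dilates of the bad cubes), observe $\nu_{\vec w}(\Omega^{\ast})\lesssim\prod_k(\int\Phi^{(m)}(f_k)w_k)^{1/m}$ by the same input, and estimate the contribution on $\mathbb R^n\setminus\Omega^{\ast}$ pointwise: for $k\in B$ the cancellation $\int h_{k,j}=0$ lets one replace the variable $y_k$ by the center $c_{Q_{k,j}}$ at the price of the regularity factor $\theta\big(|y_k-c_{Q_{k,j}}|/(|x-y_1|+\dots+|x-y_m|)\big)$ supplied by \eqref{regular2}, and one expands the commutator product as $\prod_{k=1}^m\big[(b_k(x)-(b_k)_{Q})-(b_k(y_k)-(b_k)_{Q})\big]$ against the relevant cubes $Q$ (keeping the factor of ``$x$-type'' when $k\notin B$, since $g_k$ is not supported on a cube). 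The ``$x$-type'' factors $(b_k(x)-(b_k)_Q)$ are pulled outside the $y$-integration and, on the dyadic annulus $2^{\ell+1}Q\setminus 2^{\ell}Q$, contribute a growth $\lesssim(1+\ell)^m$ in $x$; the ``$y$-type'' factors $(b_k(y_k)-(b_k)_Q)$ (only for $k\in B$) are absorbed on the cubes, together with the $h_{k,j}$, via the generalized H\"older inequality in the Orlicz duality $(\exp L,\,L(\log L)^{(\cdot)})$, producing local $L(\log L)^{(\cdot)}$-averages of the $f_k$ which the analysis of $\Omega^{\ast}$ turns into $\int\Phi^{(m)}(f_k)w_k$, using $\Phi^{(m)}(t)\approx t(1+\log^+t)^m$.

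The main obstacle --- and the only place where \eqref{theta3} is needed in full strength --- is the convergence, after summing the annular contributions over $\ell\ge 1$, of the series $\sum_{\ell\ge 1}\theta(c_n2^{-\ell})(1+\ell)^m$, which is comparable to $\int_0^1\theta(t)(1+|\log t|^m)/t\,dt$; here the exponent $m$ on the logarithm is precisely the worst-case growth over the dilates of $Q$ of a product of $m$ $\mathrm{BMO}$ functions, i.e.\ exactly what the $m$-fold nesting in $\big[\Pi\vec b,T_\theta\big]$ generates (terms in which only $r<m$ BMO differences remain of $x$-type need only the weaker series $\sum_\ell\theta(c_n2^{-\ell})(1+\ell)^r$). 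The rest is bookkeeping: one must check that in each of the $\sim 3^m$ terms coming from the good/bad split and the expansion of $\prod_k[(b_k(x)-(b_k)_Q)-(b_k(y_k)-(b_k)_Q)]$ the total logarithmic loss is split consistently --- whatever is not placed on the $x$-side (and absorbed by \eqref{theta3}) being placed on the cube-side and absorbed by the $\Phi^{(m)}=\Phi\circ\cdots\circ\Phi$-Orlicz norm via generalized H\"older --- and that the cube-summations close with the correct geometric-mean structure $\prod_k(\cdot)^{1/m}$, exactly as for Theorems \ref{weak} and \ref{Wcomm} and in \cite{perez,lu}. Summing the $2^m$ terms and undoing the normalization then yields the claimed estimate.
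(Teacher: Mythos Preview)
Your proposal is correct and aligns with the paper's own treatment: the paper does not give a detailed proof of this theorem but states that it ``follows the arguments used in \cite{perez} and \cite{lu} with some minor modifications,'' and your outline is precisely the Calder\'on--Zygmund decomposition scheme of \cite{perez} adapted to the $\theta$-type kernel as in \cite{lu}, with the crucial observation that the $m$-fold BMO nesting produces the series $\sum_{\ell}\theta(c_n2^{-\ell})(1+\ell)^m\approx\int_0^1\theta(t)(1+|\log t|^m)\,t^{-1}\,dt$, which is exactly condition \eqref{theta3}. One small point worth tightening: in the all-good piece you invoke $\int g_kw_k\lesssim\int\Phi^{(m)}(f_k)w_k$ and $\nu_{\vec w}(\Omega^{\ast})\lesssim\prod_k\big(\int\Phi^{(m)}(f_k)w_k\big)^{1/m}$ ``by the same input''; since the individual $w_k$ need not lie in $A_1$ (only $w_k^{1/m}\in A_1$ and $\nu_{\vec w}\in A_1$ are guaranteed by Lemma~\ref{multi}), these bounds should be justified via the multilinear $A_{(1,\dots,1)}$ structure exactly as in \cite[Lemma~4.7 and Section~5]{perez} rather than by a one-weight argument on each $w_k$ separately.
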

\begin{rek}
It was proved in \cite{perez} that when $\theta(t)=t^{\varepsilon}$ for some $\varepsilon>0$, the estimate in Theorem \ref{Wcommwh} is sharp in the sense that $\Phi^{(m)}$ cannot be replaced by $\Phi^{(k)}$ for any $k<m$.
\end{rek}
The main purpose of this paper is to investigate the behaviour of multilinear $\theta$-type Calder\'on--Zygmund operators and their commutators
with BMO functions in the context of weighted amalgam spaces.

\section{Notations and preliminaries}
\label{sec2}
\subsection{Multiple weights}
We equip the $n$-dimensional Euclidean space $\mathbb R^n$ with the Euclidean norm $|\cdot|$ and the Lebesgue measure $dx$. For any $r>0$ and $y\in\mathbb R^n$, let $B(y,r)=\big\{x\in\mathbb R^n:|x-y|<r\big\}$ denote the open ball centered at $y$ with radius $r$, $B(y,r)^{\complement}=\mathbb R^n\backslash B(y,r)$ denote its complement and $|B(y,r)|$ be the Lebesgue measure of the ball $B(y,r)$. We also use the notation $\chi_{B(y,r)}$ to denote the characteristic function of $B(y,r)$. For some $\lambda>0$, $\lambda B$ stands for the ball concentric with $B$ and having radius $\lambda$ times as large.

A weight $\omega$ is a nonnegative locally integrable function on $\mathbb R^n$ that takes values in $(0,+\infty)$ almost everywhere. First we recall the Muckenhoupt $A_p$ weight classes. A weight $\omega$ is said to be in the class $A_p$ for $1<p<\infty$, if there exists a constant $C>0$ such that
\begin{equation*}
\bigg(\frac{1}{|B|}\int_B \omega(x)\,dx\bigg)^{1/p}\bigg(\frac{1}{|B|}\int_B \omega(x)^{-p'/p}\,dx\bigg)^{1/{p'}}\leq C
\end{equation*}
for every ball $B$ in $\mathbb R^n$. Here, and in what follows, $p'$ is the conjugate exponent of $p$ such that $1/p+1/{p'}=1$. For $p=1$, we say that $\omega$ is in the class $A_1$, if there exists a constant $C>0$ such that
\begin{equation*}
\frac{1}{|B|}\int_B\omega(x)\,dx\leq C\cdot\underset{x\in B}{\mbox{ess\,inf}}\;\omega(x)
\end{equation*}
for every ball $B$ in $\mathbb R^n$. Since the $A_p$ classes are increasing with respect to $p$, the $A_\infty$ class of weights is defined in a natural way by
\begin{equation*}
A_\infty:=\bigcup_{1\leq p<\infty}A_p.
\end{equation*}
Moreover, for $\omega\in A_{\infty}$, the following characterization is often used in applications: there are positive constants $C$ and $\delta$ such that for any ball $B$ and any measurable set $E$ contained in $B$,
\begin{equation}\label{compare}
\frac{\omega(E)}{\omega(B)}\leq C\left(\frac{|E|}{|B|}\right)^\delta,
\end{equation}
where $\omega(E):=\int_{E}\omega(x)\,dx$ for any given Lebesgue measurable set $E\subset\mathbb R^n$. We say that a weight $\omega$ satisfies the doubling condition, simply denoted by $\omega\in\Delta_2$, if there is an absolute constant $C>0$ such that
\begin{equation}\label{weights}
\omega(2B)\leq C\,\omega(B)
\end{equation}
holds for any ball $B$ in $\mathbb R^n$. If $\omega\in A_p$ with $1\leq p<\infty$ (or $\omega\in A_\infty$), then we know that $\omega\in\Delta_2$.

Recently, the theory of multiple weights adapted to multilinear Calder\'on--Zygmund singular integral operators was established by Lerner et al.in \cite{lerner}. New more refined multilinear maximal function was defined and used in \cite{lerner} to characterize the class of multiple $A_{\vec{P}}$ weights, and to obtain some weighted norm inequalities for multilinear Calder\'on--Zygmund operators and their commutators with BMO functions. Now let us recall the definition of multiple weights. For given $m$ exponents $p_1,\ldots,p_m\in[1,\infty)$, we will often write $\vec{P}$ for the vector $\vec{P}=(p_1,\ldots,p_m)$, and $p$ for the number given by $1/p:=\sum_{k=1}^m 1/{p_k}$ with $p\in[1/m,\infty)$. Given $\vec{w}=(w_1,\ldots,w_m)$, set
\begin{equation*}
\nu_{\vec{w}}:=\prod_{k=1}^m w_k^{p/{p_k}}.
\end{equation*}
We say that $\vec{w}$ satisfies the multilinear $A_{\vec{P}}$ condition if it satisfies
\begin{equation}\label{multiweight}
\sup_B\bigg(\frac{1}{|B|}\int_B \nu_{\vec{w}}(x)\,dx\bigg)^{1/p}\prod_{k=1}^m\bigg(\frac{1}{|B|}\int_B w_k(x)^{-p'_k/{p_k}}\,dx\bigg)^{1/{p'_k}}<\infty.
\end{equation}
When $p_k=1$, we denote $p'_k=\infty$, and the condition $\big(\frac{1}{|B|}\int_B w_k(x)^{-p'_k/{p_k}}\,dx\big)^{1/{p'_k}}$ in \eqref{multiweight} is understood as $\big(\inf_{x\in B}w_k(x)\big)^{-1}$. In particular, when each $p_k=1$, $k=1,2,\dots,m$, we denote $A_{\vec{1}}=A_{(1,\dots,1)}$. One can easily check that $A_{(1,\dots,1)}$ is contained in $A_{\vec{P}}$ for each $\vec{P}=(p_1,\ldots,p_m)$ with $1\leq p_k<\infty$, however, the classes $A_{\vec{P}}$ are not increasing with the natural partial order. It means that for two vectors $\vec{P}=(p_1,\ldots,p_m)$ and $\vec{Q}=(q_1,\ldots,q_m)$ with $p_k\leq q_k$, $k=1,2,\dots,m$, the following relation may not be true $A_{\vec{P}}\subset A_{\vec{Q}}$, see \cite[Remark 7.3]{lerner}. The multilinear Riesz transforms are typical examples of multilinear Calder\'on--Zygmund singular integral operators. It was shown in \cite[Theorem 3.11]{lerner} that the classes $A_{\vec{P}}$ are also characterized by the boundedness of all multilinear Riesz transforms on weighted Lebesgue spaces. Moreover, in general, the condition $\vec{w}\in A_{\vec{P}}$ does not imply $w_k\in L^1_{\mathrm{loc}}(\mathbb R^n)$ for any $1\leq k\leq m$ (see \cite[Remark 7.2]{lerner}), but instead we have the following characterization of the classes $A_{\vec{P}}$.
\begin{lemma}[\cite{lerner}]\label{multi}
Let $p_1,\ldots,p_m\in[1,\infty)$ and $1/p=\sum_{k=1}^m 1/{p_k}$. Then $\vec{w}=(w_1,\ldots,w_m)\in A_{\vec{P}}$ if and only if
\begin{equation}\label{multi2}
\left\{
\begin{aligned}
&\nu_{\vec{w}}\in A_{mp},\\
&w_k^{1-p'_k}\in A_{mp'_k},\quad k=1,2,\ldots,m,
\end{aligned}\right.
\end{equation}
where $\nu_{\vec{w}}=\prod_{k=1}^m w_k^{p/{p_k}}$ and the condition $w_k^{1-p'_k}\in A_{mp'_k}$ in the case $p_k=1$ is understood as $w_k^{1/m}\in A_1$.
\end{lemma}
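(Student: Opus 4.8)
The plan is to follow the now-standard argument of Lerner--Ombrosi--P\'erez--Torres--Trujillo-Gonz\'alez (\cite{lerner}, Theorem 3.6), adapted verbatim to the present setting since the statement is literally theirs. The whole equivalence is a consequence of H\"older's inequality applied in two directions, so I would set up the notation first: for a fixed ball $B$ write
\begin{equation*}
\langle w\rangle_B:=\frac{1}{|B|}\int_B w(x)\,dx,
\end{equation*}
so that $\nu_{\vec w}\in A_{mp}$ means $\sup_B\langle\nu_{\vec w}\rangle_B^{1/(mp)}\big\langle\nu_{\vec w}^{-1/(mp-1)}\big\rangle_B^{(mp-1)/(mp)}<\infty$ (with the obvious $A_1$ reading when $mp=1$), and $w_k^{1-p_k'}\in A_{mp_k'}$ means $\sup_B\big\langle w_k^{1-p_k'}\big\rangle_B^{1/(mp_k')}\big\langle w_k^{(1-p_k')\cdot(-1/(mp_k'-1))}\big\rangle_B^{(mp_k'-1)/(mp_k')}<\infty$; note $(1-p_k')\cdot(-1/(mp_k'-1))=p_k'/p_k\cdot 1/(something)$—I would simplify this exponent early, using $1-p_k'=-p_k'/p_k$ and $mp_k'-1=mp_k'(1-1/(mp_k'))$, to bring both sides into a form directly comparable with the factors $\big(\frac{1}{|B|}\int_B w_k^{-p_k'/p_k}\big)^{1/p_k'}$ appearing in \eqref{multiweight}. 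The key algebraic identity underlying everything is $\frac{p}{mp}+\sum_{k=1}^m\frac{1}{mp_k'}\cdot\frac{?}{?}=1$, i.e. the fact that $1-\frac1p=\sum_k\frac{1}{p_k'}$ after dividing by $m$; I would record this at the outset.

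For the forward direction ($\vec w\in A_{\vec P}\Rightarrow$ \eqref{multi2}), I would fix $B$ and use H\"older's inequality to absorb the $\nu_{\vec w}$-average and the $m$ factors $\langle w_k^{-p_k'/p_k}\rangle_B^{1/p_k'}$ into the required single $A_{mp}$ or $A_{mp_k'}$ bound. Concretely, to get $\nu_{\vec w}\in A_{mp}$ one needs to bound $\big\langle\nu_{\vec w}^{-1/(mp-1)}\big\rangle_B^{(mp-1)/m}$; writing $\nu_{\vec w}^{-1/(mp-1)}=\prod_k\big(w_k^{-p_k'/p_k}\big)^{\alpha_k}$ for suitable $\alpha_k$ with $\sum\alpha_k\cdot(\text{conjugate indices})=1$ and applying the $m$-fold H\"older inequality turns the single average into a product of the $\langle w_k^{-p_k'/p_k}\rangle_B$'s, which is exactly what \eqref{multiweight} controls against $\langle\nu_{\vec w}\rangle_B^{-1/p}$. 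The estimate $w_k^{1-p_k'}\in A_{mp_k'}$ is obtained analogously, this time keeping the $k$-th factor as the ``numerator'' average and distributing the remaining $m-1$ weights plus $\nu_{\vec w}$ via H\"older; here one needs $w_j^{-p_j'/p_j}$ ($j\ne k$) and $\nu_{\vec w}$ to reconstruct $(w_k^{1-p_k'})^{1/(mp_k')}$, again a bookkeeping exercise with exponents summing correctly. For the converse, the idea is the same run backwards: given the $m+1$ conditions in \eqref{multi2}, multiply the corresponding quantities and use H\"older once more to recover the single $A_{\vec P}$ quantity in \eqref{multiweight}; one checks that the product of the displayed $A_{mp}$- and $A_{mp_k'}$-expressions, after re-grouping the averages over $B$ by H\"older, dominates $\langle\nu_{\vec w}\rangle_B^{1/p}\prod_k\langle w_k^{-p_k'/p_k}\rangle_B^{1/p_k'}$.

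I would also handle the degenerate cases separately and explicitly, since they are where sign errors creep in: when some $p_k=1$ the factor is read as $(\inf_B w_k)^{-1}$ and the claim $w_k^{1-p_k'}\in A_{mp_k'}$ becomes $w_k^{1/m}\in A_1$, which follows from the elementary fact that $\inf_B w_k\le\langle w_k^{1/m}\rangle_B^m$ combined with the other conditions; and when $mp=1$ (forcing every $p_k=1$) the $A_{mp}$ condition is the $A_1$ condition $\langle\nu_{\vec w}\rangle_B\le C\inf_B\nu_{\vec w}$, which is exactly the $A_{(1,\dots,1)}$ condition after one observes $\nu_{\vec w}=\prod w_k^{1/m}$. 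The main obstacle is purely notational rather than conceptual: keeping track of the web of conjugate exponents ($p$, $p_k$, $p_k'$, $mp$, $mp_k'$, and the further conjugates of those) and verifying in each H\"older application that the reciprocals of the indices sum to $1$. Once the exponent arithmetic is pinned down via the master identity $\frac1m\big(1-\frac1p\big)=\frac1m\sum_k\frac{1}{p_k'}$, each implication is a single invocation of (generalized) H\"older's inequality; there is no hard analysis, no covering lemma, and no use of the structure of $\theta$ or of $T_\theta$ at all—the lemma is a statement purely about weights, quoted here from \cite{lerner}, and the proof I would give is essentially a transcription of theirs with the verification of exponent identities spelled out.
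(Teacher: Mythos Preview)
The paper does not give its own proof of this lemma; it is stated there purely as a citation from \cite{lerner} and used as a black box. Your outlined approach---reducing both directions to repeated applications of H\"older's inequality with the exponent bookkeeping governed by $\frac{1}{m}\big(1-\frac{1}{p}\big)=\frac{1}{m}\sum_k\frac{1}{p_k'}$, and treating the endpoint cases $p_k=1$ and $mp=1$ separately---is exactly the argument in the original source (Theorem~3.6 of \cite{lerner}), so there is nothing to compare against in the present paper and your plan is the correct one.
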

\begin{rek}
Obviously, when $m=1$, $A_{\vec{P}}$ reduces to the classical $A_p$ class of Muckenhoupt.
Observe that in the linear case $m=1$ both conditions included in \eqref{multi2} represent the same $A_p$ condition. However, in the multilinear case $m\geq2$ neither of the conditions in \eqref{multi2} implies the other, see \cite[Remark 7.1]{lerner} for more details.
\end{rek}
Given a weight $\omega$ defined on $\mathbb R^n$, as usual, the weighted Lebesgue space $L^p(\omega)$ for $0<p<\infty$ is defined to be the set of all functions $f$ defined on $\mathbb R^n$ such that
\begin{equation*}
\big\|f\big\|_{L^p(\omega)}:=\bigg(\int_{\mathbb R^n}|f(x)|^p\omega(x)\,dx\bigg)^{1/p}<\infty.
\end{equation*}
We also denote by $WL^p(\omega)$~($0<p<\infty$), the weighted weak Lebesgue space consisting of all measurable functions $f$ defined on $\mathbb R^n$ such that
\begin{equation*}
\big\|f\big\|_{WL^p(\omega)}:=
\sup_{\lambda>0}\lambda\cdot\Big[\omega\big(\big\{x\in\mathbb R^n:|f(x)|>\lambda\big\}\big)\Big]^{1/p}<\infty.
\end{equation*}

\subsection{Orlicz spaces and Luxemburg norms}
Next we recall some basic definitions and facts from the theory of Orlicz spaces. For more information about these spaces, we refer the reader to the book \cite{rao}. A function $\mathcal A(t):[0,+\infty)\rightarrow[0,+\infty)$ is called a Young function if it is continuous, convex, strictly increasing and satisfies $\mathcal A(0)=0$ and $\mathcal A(t)\to+\infty$ as $t\to +\infty$. Given a Young function $\mathcal A$ and a ball $B$ in $\mathbb R^n$, we consider the $\mathcal A$-average of a function $f$ over a ball $B$ given by the following Luxemburg norm:
\begin{equation*}
\big\|f\big\|_{\mathcal A,B}
:=\inf\left\{\lambda>0:\frac{1}{|B|}\int_B\mathcal A\bigg(\frac{|f(x)|}{\lambda}\bigg)dx\leq1\right\}.
\end{equation*}
When $\mathcal A(t)=t^p$, $1\leq p<\infty$, it is easy to see that
\begin{equation*}
\big\|f\big\|_{\mathcal A,B}=\bigg(\frac{1}{|B|}\int_B\big|f(x)\big|^p\,dx\bigg)^{1/p};
\end{equation*}
that is, the Luxemburg norm coincides with the normalized $L^p$ norm. Associated to each Young function $\mathcal A$, one can define its complementary function $\bar{\mathcal A}$ by
\begin{equation*}
\bar{\mathcal A}(s):=\sup_{0\leq t<\infty}\big[st-\mathcal A(t)\big], \quad 0\leq s<\infty.
\end{equation*}
It can be proved that such $\bar{\mathcal A}$ is also a Young function. A direct computation shows that for all $t>0$,
\begin{equation*}
t\leq \mathcal{A}^{-1}(t)\bar{\mathcal A}^{-1}(t)\leq 2t.
\end{equation*}
Then the following generalized H\"older's inequality in Orlicz spaces holds for any given ball $B$ in $\mathbb R^n$:
\begin{equation*}
\frac{1}{|B|}\int_B\big|f(x)\cdot g(x)\big|\,dx\leq 2\big\|f\big\|_{\mathcal A,B}\big\|g\big\|_{\bar{\mathcal A},B}.
\end{equation*}
A particular case of interest, and especially in this paper, is the Young function $\Phi(t)=t\cdot(1+\log^+t)$, and we know that its complementary Young function is given by $\bar{\Phi}(t)\approx\exp(t)-1$. The corresponding averages will be denoted by
\begin{equation*}
\big\|f\big\|_{\Phi,B}=\big\|f\big\|_{L\log L,B}, \qquad\mbox{and}\qquad
\big\|g\big\|_{\bar{\Phi},B}=\big\|g\big\|_{\exp L,B}.
\end{equation*}
Thus, from the above generalized H\"older's inequality in Orlicz spaces, it follows that
\begin{equation}\label{holder}
\frac{1}{|B|}\int_B\big|f(x)\cdot g(x)\big|\,dx\leq 2\big\|f\big\|_{L\log L,B}\big\|g\big\|_{\exp L,B}.
\end{equation}
To obtain endpoint weak-type estimates for the multilinear commutators and iterated commutators on the product of weighted amalgam spaces, we need to define the weighted $\mathcal A$-average of a function $f$ over a ball $B$ by means of the mean Luxemburg norm; that is, given a Young function $\mathcal A$ and $\omega\in A_\infty$, we define (see \cite{rao,zhang})
\begin{equation*}
\big\|f\big\|_{\mathcal A(\omega),B}:=\inf\left\{\sigma>0:\frac{1}{\omega(B)}
\int_B\mathcal A\bigg(\frac{|f(x)|}{\sigma}\bigg)\cdot\omega(x)\,dx\leq1\right\}.
\end{equation*}
When $\mathcal A(t)=t$, we denote the mean Luxemburg norm with respect to $\omega$ by $\|\cdot\|_{L(\omega),B}$, and when $\Phi(t)=t\cdot(1+\log^+t)$, we denote this norm by $\|\cdot\|_{L\log L(\omega),B}$. The complementary Young function of $\Phi(t)$ is given by $\bar{\Phi}(t)\approx\exp(t)-1$ with the corresponding mean Luxemburg norm denoted by $\|\cdot\|_{\exp L(\omega),B}$. An analysis of the proof of the inequality \eqref{holder} reveals that the same conclusion still holds for the weighted case. For $\omega\in A_\infty$ and for every ball $B$ in $\mathbb R^n$, we can also show that the following generalized H\"older's inequality holds in the weighted setting (see \cite{zhang} for instance).
\begin{equation}\label{Wholder}
\frac{1}{\omega(B)}\int_B\big|f(x)\cdot g(x)\big|\omega(x)\,dx\leq C\big\|f\big\|_{L\log L(\omega),B}\big\|g\big\|_{\exp L(\omega),B}.
\end{equation}
This estimate will be used in the proofs of our main results.
\subsection{Weighted amalgam spaces}
Let $1\leq p,q\leq\infty$. A measurable function $f\in L^p_{\mathrm{loc}}(\mathbb R^n)$ is said to be in the Wiener amalgam space $(L^p,L^q)(\mathbb R^n)$ of $L^p(\mathbb R^n)$ and $L^q(\mathbb R^n)$(in the continuous case), if the function $y\mapsto\|f(\cdot)\cdot\chi_{B(y,1)}\|_{L^p}$ belongs to $L^q(\mathbb R^n)$. Define
\begin{equation*}
(L^p,L^q)(\mathbb R^n):=\left\{f:\big\|f\big\|_{(L^p,L^q)}
=\bigg(\int_{\mathbb R^n}\Big[\big\|f\cdot\chi_{B(y,1)}\big\|_{L^p}\Big]^qdy\bigg)^{1/q}<\infty\right\}.
\end{equation*}
It is easy to see that this space $(L^p,L^q)(\mathbb R^n)$ coincides with the usual Lebesgue space $L^{p}(\mathbb R^n)$ whenever $p=q$. The reader is also referred to the survey papers of Fournier--Stewart \cite{F} and Holland \cite{holland} for more information. In general, let $1\leq p,q,\alpha\leq\infty$. We define the amalgam space $(L^p,L^q)^{\alpha}(\mathbb R^n)$ of $L^p(\mathbb R^n)$ and $L^q(\mathbb R^n)$ as the set of all measurable functions $f$ satisfying $f\in L^p_{\mathrm{loc}}(\mathbb R^n)$ and $\big\|f\big\|_{(L^p,L^q)^{\alpha}}<\infty$, where
\begin{equation*}
\begin{split}
\big\|f\big\|_{(L^p,L^q)^{\alpha}}
:=&\sup_{r>0}\left\{\int_{\mathbb R^n}\Big[\big|B(y,r)\big|^{1/{\alpha}-1/p-1/q}\big\|f\cdot\chi_{B(y,r)}\big\|_{L^p}\Big]^qdy\right\}^{1/q}\\
=&\sup_{r>0}\Big\|\big|B(y,r)\big|^{1/{\alpha}-1/p-1/q}\big\|f\cdot\chi_{B(y,r)}\big\|_{L^p}\Big\|_{L^q},
\end{split}
\end{equation*}
with the usual modification when $p=\infty$ or $q=\infty$. This amalgam space(in the continuous case) arises naturally in harmonic analysis and was originally introduced by Fofana in \cite{fofana}. It turns out that the space $(L^p,L^q)^{\alpha}(\mathbb R^n)$ is closely related to Lebesgue and Morrey spaces. Many useful results in harmonic analysis (such as Fourier multipliers and boundedness properties of maximal operators and integral operators), well-known in the Lebesgue space $L^{p}(\mathbb R^n)$, have been extended within the framework of this space. As proved in \cite{fofana} the space $(L^p,L^q)^{\alpha}(\mathbb R^n)$ is nontrivial if and only if $p\leq\alpha\leq q$; thus in the remaining of the paper we will always assume that this condition $p\leq\alpha\leq q$ is fulfilled.

Note that
\begin{itemize}
  \item for $1\leq p\leq\alpha\leq q\leq\infty$, it is readily to see that $(L^p,L^q)^{\alpha}(\mathbb R^n)\subseteq(L^p,L^q)(\mathbb R^n)$ if we put $r=1$;
  \item if $1\leq p<\alpha$ and $q=\infty$, then $(L^p,L^q)^{\alpha}(\mathbb R^n)$ is just the classical Morrey space $\mathcal L^{p,\kappa}(\mathbb R^n)$, which is defined by (with $\kappa=1-p/{\alpha}$, see \cite{morrey,adams1})
\begin{equation*}
\mathcal L^{p,\kappa}(\mathbb R^n):=\Big\{f:\big\|f\big\|_{\mathcal L^{p,\kappa}}<\infty\Big\},
\end{equation*}
where
\begin{equation*}
\big\|f\big\|_{\mathcal L^{p,\kappa}}
:=\sup_{y\in\mathbb R^n,r>0}\bigg(\frac{1}{|B(y,r)|^\kappa}\int_{B(y,r)}|f(x)|^p\,dx\bigg)^{1/p};
\end{equation*}
  \item if $p=\alpha$ and $q=\infty$, then $(L^p,L^q)^{\alpha}(\mathbb R^n)$ reduces to the usual Lebesgue space $L^{p}(\mathbb R^n)$.
\end{itemize}
In \cite{wang}(see also \cite{wang2,wang3}), we considered a weighted version of the amalgam space $(L^p,L^q)^{\alpha}(\omega;\mu)$, and introduced some new classes of weighted amalgam spaces. Let us begin with the definitions of the weighted amalgam spaces. Moreover, in order to deal with the multilinear case $m\geq2$, we shall define these weighted spaces for all $0<p<\infty$.
\begin{defn}\label{amalgam}
Let $0<p\leq\alpha\leq q\leq\infty$, and let $\omega,\mu$ be two weights on $\mathbb R^n$. The weighted amalgam space $(L^p,L^q)^{\alpha}(\omega;\mu)$ is defined to be the set of all locally integrable functions $f$ such that
\begin{equation*}
\begin{split}
\big\|f\big\|_{(L^p,L^q)^{\alpha}(\omega;\mu)}
:=&\sup_{r>0}\left\{\int_{\mathbb R^n}\Big[\omega(B(y,r))^{1/{\alpha}-1/p-1/q}\big\|f\cdot\chi_{B(y,r)}\big\|_{L^p(\omega)}\Big]^q\mu(y)\,dy\right\}^{1/q}\\
=&\sup_{r>0}\Big\|\omega(B(y,r))^{1/{\alpha}-1/p-1/q}\big\|f\cdot\chi_{B(y,r)}\big\|_{L^p(\omega)}\Big\|_{L^q(\mu)}<\infty,
\end{split}
\end{equation*}
with the usual modification when $q=\infty$, where $\omega(B(y,r))=\int_{B(y,r)}\omega(x)\,dx$ is the weighted measure of $B(y,r)$.
\end{defn}
There is a routine procedure to define the weak space.
\begin{defn}\label{Wamalgam}
Let $0<p\leq\alpha\leq q\leq\infty$, and let $\omega,\mu$ be two weights on $\mathbb R^n$. The weighted weak amalgam space $(WL^p,L^q)^{\alpha}(\omega;\mu)$ is defined to be the set of all measurable functions $f$ for which
\begin{equation*}
\begin{split}
\big\|f\big\|_{(WL^p,L^q)^{\alpha}(\omega;\mu)}
:=&\sup_{r>0}\left\{\int_{\mathbb R^n}\Big[\omega(B(y,r))^{1/{\alpha}-1/p-1/q}\big\|f\cdot\chi_{B(y,r)}\big\|_{WL^p(\omega)}\Big]^q\mu(y)\,dy\right\}^{1/q}\\
=&\sup_{r>0}\Big\|\omega(B(y,r))^{1/{\alpha}-1/p-1/q}\big\|f\cdot\chi_{B(y,r)}\big\|_{WL^p(\omega)}\Big\|_{L^q(\mu)}<\infty,
\end{split}
\end{equation*}
with the usual modification when $q=\infty$.
\end{defn}
Note that
\begin{itemize}
 \item for $1\leq p\leq\alpha\leq q\leq\infty$, we can see that $(L^p,L^q)^{\alpha}(\omega;\mu)$ becomes a Banach function space with respect to the norm $\|\cdot\|_{(L^p,L^q)^{\alpha}(\omega;\mu)}$, and $(WL^p,L^q)^{\alpha}(\omega;\mu)$ becomes a quasi-Banach space with respect to the quasi-norm $\|\cdot\|_{(WL^p,L^q)^{\alpha}(\omega;\mu)}$;
 \item when $\mu\equiv1$, the weighted space $(L^p,L^q)^{\alpha}(\omega;\mu)$ was introduced by Feuto in \cite{feuto2} (see also \cite{feuto1,feuto3}), and when $\mu\equiv1$ and $p=1$, the weighted weak space $(WL^p,L^q)^{\alpha}(\omega;\mu)$ was also defined by Feuto in \cite{feuto2};
 \item if $1\leq p<\alpha$ and $q=\infty$, then $(L^p,L^q)^{\alpha}(\omega;\mu)$ is exactly the weighted Morrey space $\mathcal L^{p,\kappa}(\omega)$, which was first defined and studied by Komori and Shirai in \cite{komori}(with $\kappa=1-p/{\alpha}$).
\begin{equation*}
\begin{split}
&\mathcal L^{p,\kappa}(\omega):=\Big\{f :\big\|f\big\|_{\mathcal L^{p,\kappa}(\omega)}<\infty\Big\},
\end{split}
\end{equation*}
where
\begin{equation*}
\big\|f\big\|_{\mathcal L^{p,\kappa}(\omega)}
:=\sup_{y\in\mathbb R^n,r>0}\bigg(\frac{1}{\omega(B(y,r))^{\kappa}}\int_{B(y,r)}|f(x)|^p\omega(x)\,dx\bigg)^{1/p},
\end{equation*}
and $(WL^p,L^q)^{\alpha}(\omega;\mu)$ is exactly the weighted weak Morrey space $W\mathcal L^{p,\kappa}(\omega)$ defined by (with $\kappa=1-p/{\alpha}$, see \cite{wang1})
\begin{equation*}
\begin{split}
W\mathcal L^{p,\kappa}(\omega):=\Big\{f:\big\|f\big\|_{W\mathcal L^{p,\kappa}(\omega)}<\infty\Big\},
\end{split}
\end{equation*}
where
\begin{equation*}
\big\|f\big\|_{W\mathcal L^{p,\kappa}(\omega)}:=\sup_{y\in\mathbb R^n,r>0}\sup_{\lambda>0}\frac{1}{\omega(B(y,r))^{\kappa/p}}\lambda\cdot\Big[\omega\big(\big\{x\in B(y,r):|f(x)|>\lambda\big\}\big)\Big]^{1/p};
\end{equation*}
  \item if $p=\alpha$ and $q=\infty$, then $(L^p,L^q)^{\alpha}(\omega;\mu)$ reduces to the weighted Lebesgue space $L^{p}(\omega)$, and $(WL^p,L^q)^{\alpha}(\omega;\mu)$ reduces to the weighted weak Lebesgue space $WL^{p}(\omega)$.
\end{itemize}
In order to deal with the endpoint case of the commutators, we have to consider the following $L\log L$-type space.
Following \cite{wang}, we now introduce new weighted amalgam spaces of $L\log L$ type as follows.
\begin{defn}
Let $p=1$, $1\leq\alpha\leq q\leq\infty$, and let $\omega,\mu$ be two weights on $\mathbb R^n$. We denote by $(L\log L,L^q)^{\alpha}(\omega;\mu)$ the weighted amalgam space of $L\log L$ type, the space of all locally integrable functions $f$ defined on $\mathbb R^n$ with finite norm
$\big\|f\big\|_{(L\log L,L^q)^{\alpha}(\omega;\mu)}$.
\begin{equation*}
(L\log L,L^q)^{\alpha}(\omega;\mu):=\left\{f:\big\|f\big\|_{(L\log L,L^q)^{\alpha}(\omega;\mu)}<\infty\right\},
\end{equation*}
where
\begin{equation*}
\begin{split}
\big\|f\big\|_{(L\log L,L^q)^{\alpha}(\omega;\mu)}
:=&\sup_{r>0}\left\{\int_{\mathbb R^n}\Big[\omega(B(y,r))^{1/{\alpha}-1/q}\big\|f\big\|_{L\log L(\omega),B(y,r)}\Big]^q\mu(y)\,dy\right\}^{1/q}\\
=&\sup_{r>0}\Big\|\omega(B(y,r))^{1/{\alpha}-1/q}\big\|f\big\|_{L\log L(\omega),B(y,r)}\Big\|_{L^q(\mu)}.
\end{split}
\end{equation*}
\end{defn}
Observe that $t\leq t\cdot(1+\log^+t)$ for all $t>0$. Then for any given ball $B(y,r)\subset\mathbb R^n$ and $\omega\in A_\infty$, we have
\begin{equation*}
\big\|f\big\|_{L(\omega),B(y,r)}\leq \big\|f\big\|_{L\log L(\omega),B(y,r)}
\end{equation*}
by definition. This means that the following inequality (it can be seen as a generalized Jensen's inequality)
\begin{equation}\label{main esti1}
\big\|f\big\|_{L(\omega),B(y,r)}=\frac{1}{\omega(B(y,r))}\int_{B(y,r)}|f(x)|\cdot\omega(x)\,dx\leq\big\|f\big\|_{L\log L(\omega),B(y,r)}
\end{equation}
holds true for any ball $B(y,r)\subset\mathbb R^n$. Hence, for $1\leq\alpha\leq q\leq\infty$ and $\omega\in A_\infty$, we can further see the following inclusion relation from \eqref{main esti1}:
\begin{equation*}
(L\log L,L^q)^{\alpha}(\omega;\mu)\subset (L^1,L^q)^{\alpha}(\omega;\mu).
\end{equation*}

Recently, many works in classical harmonic analysis have been devoted to norm inequalities involving several integral operators in the setting of weighted amalgam spaces, see, for example \cite{feuto4,feuto1,feuto2,feuto3,wei}. These results obtained are extensions of well-known analogues in the weighted Lebesgue spaces. Inspired by the works mentioned above, it is therefore interesting to know the behavior of multilinear $\theta$-type Calder\'on--Zygmund operators and the corresponding commutators on products of weighted amalgam spaces, the aim of this paper is to give a positive answer to this problem. We are going to prove that multilinear $\theta$-type Calder\'on--Zygmund operators $T_{\theta}$ which are known to be bounded on products of weighted Lebesgue spaces with multiple weights, are also bounded from $(L^{p_1},L^{q_1})^{\alpha_1}(w_1;\mu)\times(L^{p_2},L^{q_2})^{\alpha_2}(w_2;\mu)\times\cdots
\times(L^{p_m},L^{q_m})^{\alpha_m}(w_m;\mu)$ into $(L^p,L^q)^{\alpha}(\nu_{\vec{w}};\mu)$ when $1<p_k<\infty$ for $k=1,2,\dots,m$, and bounded from $(L^{p_1},L^{q_1})^{\alpha_1}(w_1;\mu)\times(L^{p_2},L^{q_2})^{\alpha_2}(w_2;\mu)\times\cdots
\times(L^{p_m},L^{q_m})^{\alpha_m}(w_m;\mu)$ into $(WL^p,L^q)^{\alpha}(\nu_{\vec{w}};\mu)$ when $1\leq p_k<\infty$ for $k=1,2,\dots,m$ and at least one of the $p_k=1$. Moreover, the weighted strong-type and weak-type endpoint estimates for both types of multilinear commutators and iterated commutators in the context of weighted amalgam spaces are also obtained.

\section{Main results}
\label{sec3}
We will extend the results obtained in \cite{lu} for the $m$-linear $\theta$-type Calder\'on--Zygmund operators to the product of weighted amalgam spaces with multiple weights.
Our first two results on the boundedness properties of multilinear $\theta$-type Calder\'on--Zygmund operators are presented as follows.

\begin{theorem}\label{mainthm:1}
Let $m\geq 2$ and $T_{\theta}$ be an $m$-linear $\theta$-type Calder\'on--Zygmund operator with $\theta$ satisfying the condition \eqref{theta1}. Suppose that $1<p_k\leq\alpha_k<q_k<\infty$, $k=1,2,\ldots,m$ and $p\in(1/m,\infty)$ with $1/p=\sum_{k=1}^m 1/{p_k}$, $q\in[1,\infty)$ with $1/q=\sum_{k=1}^m 1/{q_k}$ and $1/{\alpha}=\sum_{k=1}^m 1/{\alpha_k};$ $\vec{w}=(w_1,\ldots,w_m)\in A_{\vec{P}}$ with $w_1,\ldots,w_m\in A_\infty$ and $\mu\in\Delta_2$.
In addition, suppose that
\begin{equation}\label{supp}
p_1\bigg(\frac{1}{\alpha_1}-\frac{1}{q_1}\bigg)=p_2\bigg(\frac{1}{\alpha_2}-\frac{1}{q_2}\bigg)
=\cdots=p_m\bigg(\frac{1}{\alpha_m}-\frac{1}{q_m}\bigg).
\end{equation}
Then there exists a constant $C>0$ such that for all $\vec{f}=(f_1,\ldots,f_m)\in(L^{p_1},L^{q_1})^{\alpha_1}(w_1;\mu)\times\cdots
\times(L^{p_m},L^{q_m})^{\alpha_m}(w_m;\mu)$,
\begin{equation*}
\big\|T_\theta(\vec{f})\big\|_{(L^p,L^q)^{\alpha}(\nu_{\vec{w}};\mu)}\leq C\prod_{k=1}^m\big\|f_k\big\|_{(L^{p_k},L^{q_k})^{\alpha_k}(w_k;\mu)}
\end{equation*}
with $\nu_{\vec{w}}=\prod_{k=1}^m w_k^{p/{p_k}}$.
\end{theorem}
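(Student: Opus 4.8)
The plan is to reduce everything, by the very definition of the amalgam norm, to a uniform-in-$r$ estimate on a single ball. Fix $r>0$ and $B=B(y,r)$; it suffices to bound the $L^q(\mu)$-norm (in the variable $y$) of
\begin{equation*}
\nu_{\vec w}(B)^{1/\alpha-1/p-1/q}\big\|T_\theta(\vec f)\chi_{B}\big\|_{L^p(\nu_{\vec w})}
\end{equation*}
by $C\prod_{k=1}^m\|f_k\|_{(L^{p_k},L^{q_k})^{\alpha_k}(w_k;\mu)}$ with $C$ independent of $r$. I would split $f_k=f_k^{0}+f_k^{\infty}$, $f_k^{0}:=f_k\chi_{2B}$, $f_k^{\infty}:=f_k\chi_{\mathbb R^n\setminus2B}$, and decompose $T_\theta(\vec f)$ on $B$ into the local piece $T_\theta(f_1^{0},\dots,f_m^{0})$ and the sum of the other $2^m-1$ pieces; for $x\in B$ each of the latter has $x$ outside the support of at least one factor, so their sum equals $\int_{(\mathbb R^n)^m\setminus(2B)^m}K(x,\vec y)\prod_k f_k(y_k)\,d\vec y$ (the reduction to $C^\infty_0$ data being the usual density argument). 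This scheme uses only the kernel size bound \eqref{size} and the weighted $L^p$-estimate of Theorem \ref{strong}; hence condition \eqref{theta1} suffices and the regularity conditions \eqref{regular1}, \eqref{regular2} enter only inside that black box.

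For the local piece, Theorem \ref{strong} gives $\|T_\theta(f_1^{0},\dots,f_m^{0})\chi_B\|_{L^p(\nu_{\vec w})}\le C\prod_k\|f_k\chi_{2B}\|_{L^{p_k}(w_k)}$. I would then write $\|f_k\chi_{2B}\|_{L^{p_k}(w_k)}=w_k(2B)^{-(1/\alpha_k-1/p_k-1/q_k)}\big[w_k(2B)^{1/\alpha_k-1/p_k-1/q_k}\|f_k\chi_{B(y,2r)}\|_{L^{p_k}(w_k)}\big]$, the bracket being an $L^{q_k}(\mu)$-function of $y$ with norm $\le\|f_k\|_{(L^{p_k},L^{q_k})^{\alpha_k}(w_k;\mu)}$ (take radius $2r$ in the defining supremum). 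After using that each $w_k$ is doubling, the residual scalar factor is $\nu_{\vec w}(B)^{1/\alpha-1/p-1/q}\prod_k w_k(B)^{-(1/\alpha_k-1/p_k-1/q_k)}$, and here condition \eqref{supp} is indispensable: it forces $1/\alpha_k-1/p_k-1/q_k=(\kappa_0-1)/p_k$ for a single $\kappa_0=p_k(1/\alpha_k-1/q_k)\in(0,1)$, so this factor is a positive power of $\prod_k w_k(B)^{p/p_k}\big/\nu_{\vec w}(B)$. This is bounded thanks to $w_1,\dots,w_m\in A_\infty$: by the reverse-Jensen characterization of $A_\infty$ and convexity of $\exp$,
\begin{equation*}
\frac{\nu_{\vec w}(B)}{|B|}=\frac1{|B|}\int_B\prod_{k=1}^m w_k^{p/p_k}\ge\prod_{k=1}^m\exp\!\Big(\frac1{|B|}\int_B\log w_k\Big)^{p/p_k}\gtrsim\prod_{k=1}^m\Big(\frac{w_k(B)}{|B|}\Big)^{p/p_k},
\end{equation*}
that is $\prod_k w_k(B)^{p/p_k}\lesssim\nu_{\vec w}(B)$. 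Taking the $L^q(\mu)$-norm and applying H\"older with $1/q=\sum_k1/q_k$ then controls the local contribution by $C\prod_k\|f_k\|_{(L^{p_k},L^{q_k})^{\alpha_k}(w_k;\mu)}$.

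For the global piece I would use \eqref{size} together with $(\mathbb R^n)^m\setminus(2B)^m=\bigcup_{j\ge1}\big[(2^{j+1}B)^m\setminus(2^jB)^m\big]$: on the $j$-th shell at least one $|x-y_k|\gtrsim2^jr$, so $(|x-y_1|+\dots+|x-y_m|)^{mn}\gtrsim|2^jB|^m$, whence for $x\in B$
\begin{equation*}
\Big|\int_{(\mathbb R^n)^m\setminus(2B)^m}K(x,\vec y)\prod_k f_k(y_k)\,d\vec y\Big|\lesssim\sum_{j\ge1}\prod_{k=1}^m\frac1{|2^jB|}\int_{2^{j+1}B}|f_k(y_k)|\,dy_k,
\end{equation*}
which is a constant on $B$. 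H\"older in each inner integral against $w_k$ and $w_k^{1-p'_k}$, followed by the multilinear $A_{\vec P}$ condition on the ball $2^{j+1}B$, gives $\prod_k\frac1{|2^jB|}\int_{2^{j+1}B}|f_k|\lesssim\nu_{\vec w}(2^{j+1}B)^{-1/p}\prod_k\|f_k\chi_{2^{j+1}B}\|_{L^{p_k}(w_k)}$. Multiplying by $\nu_{\vec w}(B)^{1/p}$ (from $\|\chi_B\|_{L^p(\nu_{\vec w})}$) and by $\nu_{\vec w}(B)^{1/\alpha-1/p-1/q}$, and re-inserting the $L^{q_k}(\mu)$-admissible quantities $w_k(B(y,2^{j+1}r))^{1/\alpha_k-1/p_k-1/q_k}\|f_k\chi_{B(y,2^{j+1}r)}\|_{L^{p_k}(w_k)}$, the scalar weight factor—after the same use of \eqref{supp}, doubling and $\prod_k w_k(2^{j+1}B)^{p/p_k}\lesssim\nu_{\vec w}(2^{j+1}B)$—reduces to a constant times $\big(\nu_{\vec w}(B)/\nu_{\vec w}(2^{j+1}B)\big)^{\kappa_0/p}$, which by the quantitative $A_\infty$ inequality \eqref{compare} for $\nu_{\vec w}$ is $\lesssim2^{-jn\delta\kappa_0/p}$ and hence summable in $j$ (this is exactly where $\kappa_0>0$, i.e.\ $\alpha_k<q_k<\infty$, is needed). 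Minkowski's inequality (valid since $q\ge1$) pulls the convergent series outside the $L^q(\mu)$-norm, and H\"older with $1/q=\sum_k1/q_k$ bounds the global contribution by $C\prod_k\|f_k\|_{(L^{p_k},L^{q_k})^{\alpha_k}(w_k;\mu)}$ as well. Adding the two estimates and taking the supremum over $r$ completes the argument.

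The hard part is entirely the weighted bookkeeping, not the operator theory. One must simultaneously (i) make the exponents attached to $\nu_{\vec w}$ match those attached to the individual $w_k$, which is precisely what the compatibility relation \eqref{supp} arranges; (ii) compare $\prod_k w_k(B)^{p/p_k}$ with $\nu_{\vec w}(B)$, which is possible only because of the extra hypothesis $w_k\in A_\infty$—the plain condition $\vec w\in A_{\vec P}$ (cf.\ Lemma \ref{multi}) delivers only the reverse, H\"older-type, inequality; and (iii) extract geometric decay from the dyadic shells via the $A_\infty$ estimate \eqref{compare} for $\nu_{\vec w}$. Once these three ingredients are in place, the operator-theoretic input (Theorem \ref{strong} and the size bound \eqref{size}) is used only as a black box, and the proof is insensitive to whether $\theta$ satisfies merely \eqref{theta1} or a stronger regularity condition.
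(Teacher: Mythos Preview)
Your proposal is correct and follows essentially the same approach as the paper: the same local/global split, the same use of Theorem~\ref{strong} and the size estimate \eqref{size}, the same reverse-Jensen comparison $\prod_k w_k(B)^{p/p_k}\lesssim\nu_{\vec w}(B)$ under $w_k\in A_\infty$ (this is the paper's \eqref{wanghua1}), the same reduction via \eqref{supp} to a single exponent, and the same $A_\infty$-decay \eqref{compare} for $\nu_{\vec w}$ to sum the dyadic shells. The only cosmetic difference is that you lump the $2^m-1$ off-diagonal terms into the single integral over $(\mathbb R^n)^m\setminus(2B)^m$, whereas the paper estimates each $(\beta_1,\dots,\beta_m)\in\mathfrak L$ separately and then observes they all obey the same bound; both routes give the same estimate.
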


\begin{theorem}\label{mainthm:2}
Let $m\geq 2$ and $T_{\theta}$ be an $m$-linear $\theta$-type Calder\'on--Zygmund operator with $\theta$ satisfying the condition \eqref{theta1}. Suppose that $1\leq p_k\leq\alpha_k<q_k<\infty$, $k=1,2,\ldots,m$, $\min\{p_1,\ldots,p_m\}=1$ and $p\in[1/m,\infty)$ with $1/p=\sum_{k=1}^m 1/{p_k}$, $q\in[1,\infty)$ with $1/q=\sum_{k=1}^m 1/{q_k}$ and $1/{\alpha}=\sum_{k=1}^m 1/{\alpha_k};$ $\vec{w}=(w_1,\ldots,w_m)\in A_{\vec{P}}$ with $w_1,\ldots,w_m\in A_\infty$ and $\mu\in\Delta_2$. In addition, suppose that
\begin{equation*}
p_1\bigg(\frac{1}{\alpha_1}-\frac{1}{q_1}\bigg)=p_2\bigg(\frac{1}{\alpha_2}-\frac{1}{q_2}\bigg)
=\cdots=p_m\bigg(\frac{1}{\alpha_m}-\frac{1}{q_m}\bigg).
\end{equation*}
Then there exists a constant $C>0$ such that for all $\vec{f}=(f_1,\ldots,f_m)\in(L^{p_1},L^{q_1})^{\alpha_1}(w_1;\mu)\times\cdots
\times(L^{p_m},L^{q_m})^{\alpha_m}(w_m;\mu)$,
\begin{equation*}
\big\|T_\theta(\vec{f})\big\|_{(WL^p,L^q)^{\alpha}(\nu_{\vec{w}};\mu)}\leq C\prod_{k=1}^m\big\|f_k\big\|_{(L^{p_k},L^{q_k})^{\alpha_k}(w_k;\mu)}
\end{equation*}
with $\nu_{\vec{w}}=\prod_{k=1}^m w_k^{p/{p_k}}$.
\end{theorem}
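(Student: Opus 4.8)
The plan is to adapt the one-ball localization technique for weighted amalgam spaces, running in parallel with the strong-type Theorem~\ref{mainthm:1} but with $WL^p$ in place of $L^p$ and the weak endpoint estimate of Theorem~\ref{weak} in place of Theorem~\ref{strong}. Fix a ball $B=B(y_0,r)$ and split each $f_k=f_k^0+f_k^\infty$, where $f_k^0=f_k\chi_{2B}$ and $f_k^\infty=f_k\chi_{(2B)^{\complement}}$. By $m$-linearity,
\begin{equation*}
T_\theta(\vec f)=T_\theta(f_1^0,\dots,f_m^0)+\sum_{\vec\sigma}T_\theta(f_1^{\sigma_1},\dots,f_m^{\sigma_m}),
\end{equation*}
where the sum runs over the $2^m-1$ tuples $\vec\sigma=(\sigma_1,\dots,\sigma_m)\in\{0,\infty\}^m$ with $\vec\sigma\neq(0,\dots,0)$. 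Since $\|\cdot\|_{WL^p(\nu_{\vec w})}$ is a quasi-norm whose triangle constant for finitely many summands depends only on $m$ (even when $p<1$) and $\|g\chi_B\|_{WL^p(\nu_{\vec w})}\leq\|g\|_{WL^p(\nu_{\vec w})}$, it suffices to estimate each piece restricted to $B$, then multiply by $\nu_{\vec w}(B)^{1/\alpha-1/p-1/q}$, take the $L^q(\mu)$-norm in $y_0$, and pass to the supremum over $r>0$. For the local piece, Theorem~\ref{weak} applies---$\min_k p_k=1$, $\vec w\in A_{\vec P}$, and $f_k^0\in L^{p_k}(w_k)$ because $f_k$ lies in the amalgam space---and yields
\begin{equation*}
\big\|T_\theta(f_1^0,\dots,f_m^0)\chi_B\big\|_{WL^p(\nu_{\vec w})}\leq C\prod_{k=1}^m\big\|f_k\chi_{2B}\big\|_{L^{p_k}(w_k)}.
\end{equation*}

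For a global piece with $\vec\sigma\neq(0,\dots,0)$, I would first establish a pointwise estimate on $B$. Decomposing $(2B)^{\complement}=\bigcup_{j\geq1}(2^{j+1}B\setminus 2^jB)$, for $x\in B$ and $y_k$ in the $j_k$-th annulus one has $|x-y_k|\approx|y_0-y_k|\approx 2^{j_k}r$, so the size condition~\eqref{size} gives $\big(\sum_{k}|x-y_k|\big)^{mn}\gtrsim|2^{J}B|^m$, where $J$ is the largest annulus index carried by an $\infty$-component; summing over configurations (which costs only a harmless polynomial factor $(J+1)^m$) and recalling that $f_k^0$ is supported in $2B$, one arrives at
\begin{equation*}
\big|T_\theta(f_1^{\sigma_1},\dots,f_m^{\sigma_m})(x)\big|\lesssim\sum_{j\geq1}(j+1)^m\,\frac{1}{|2^jB|^m}\prod_{k=1}^m\int_{2^{j+1}B}|f_k(y)|\,dy.
\end{equation*}
H\"older's inequality in each factor followed by the multilinear $A_{\vec P}$ condition~\eqref{multiweight} gives $|2^jB|^{-m}\prod_{k}\int_{2^{j+1}B}|f_k|\lesssim\nu_{\vec w}(2^{j+1}B)^{-1/p}\prod_{k}\|f_k\chi_{2^{j+1}B}\|_{L^{p_k}(w_k)}$, and since the right-hand side is essentially constant on $B$,
\begin{equation*}
\big\|T_\theta(f_1^{\sigma_1},\dots,f_m^{\sigma_m})\chi_B\big\|_{WL^p(\nu_{\vec w})}\lesssim\nu_{\vec w}(B)^{1/p}\sum_{j\geq1}(j+1)^m\,\nu_{\vec w}(2^{j+1}B)^{-1/p}\prod_{k=1}^m\big\|f_k\chi_{2^{j+1}B}\big\|_{L^{p_k}(w_k)}.
\end{equation*}
Absorbing the local piece as the $j=0$ term (legitimate since $\nu_{\vec w}$ is doubling) and multiplying through by $\nu_{\vec w}(B)^{1/\alpha-1/p-1/q}$, the problem reduces to bounding, uniformly in $r$, the $L^q(\mu)$-norm in $y_0$ of
\begin{equation*}
\sum_{j\geq0}(j+1)^m\bigg(\frac{\nu_{\vec w}(B)}{\nu_{\vec w}(2^{j+1}B)}\bigg)^{1/\alpha-1/q}\nu_{\vec w}(2^{j+1}B)^{1/\alpha-1/p-1/q}\prod_{k=1}^m\big\|f_k\chi_{2^{j+1}B}\big\|_{L^{p_k}(w_k)}.
\end{equation*}

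Two observations finish the proof. First, $\nu_{\vec w}\in A_{mp}\subset A_\infty$ by Lemma~\ref{multi}, so the characterization~\eqref{compare} and the positivity of $1/\alpha-1/q=\sum_{k}(1/\alpha_k-1/q_k)$ (which uses $\alpha_k<q_k$) give $\big(\nu_{\vec w}(B)/\nu_{\vec w}(2^{j+1}B)\big)^{1/\alpha-1/q}\leq C\,2^{-j\gamma}$ for some $\gamma>0$, absorbing the factor $(j+1)^m$. Second---and this is the crux---since each $w_k\in A_\infty$ one has the two-sided comparison $\nu_{\vec w}(2^{j+1}B)\approx\prod_{k=1}^m w_k(2^{j+1}B)^{p/p_k}$ (the bound ``$\leq$'' is H\"older's inequality applied to $\prod_k w_k^{p/p_k}$, while ``$\gtrsim$'' follows from Jensen's inequality together with the $A_\infty$ estimate $\frac{1}{|B|}\int_B w_k\lesssim\exp\big(\frac{1}{|B|}\int_B\log w_k\big)$), whereas the balance hypothesis~\eqref{supp} is exactly equivalent to $p_k\big(\frac{1}{\alpha_k}-\frac{1}{p_k}-\frac{1}{q_k}\big)=p\big(\frac{1}{\alpha}-\frac{1}{p}-\frac{1}{q}\big)$ for every $k$; combining the two gives
\begin{equation*}
\nu_{\vec w}(2^{j+1}B)^{1/\alpha-1/p-1/q}\approx\prod_{k=1}^m w_k(2^{j+1}B)^{1/\alpha_k-1/p_k-1/q_k}.
\end{equation*}
Thus each term of the $j$-sum is dominated by $C\,2^{-j\gamma}\prod_{k=1}^m\big[w_k(2^{j+1}B)^{1/\alpha_k-1/p_k-1/q_k}\big\|f_k\chi_{2^{j+1}B}\big\|_{L^{p_k}(w_k)}\big]$. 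Taking the $L^q(\mu)$-norm in $y_0$ and using Minkowski's inequality ($q\geq1$) to pull the $j$-summation outside, then the generalized H\"older inequality $\big\|\prod_k g_k\big\|_{L^q(\mu)}\leq\prod_k\|g_k\|_{L^{q_k}(\mu)}$ (valid since $1/q=\sum_k 1/q_k$), and finally recognizing each factor as bounded by $\|f_k\|_{(L^{p_k},L^{q_k})^{\alpha_k}(w_k;\mu)}$ directly from the definition of the amalgam norm at radius $2^{j+1}r$, one obtains the bound $C\big(\sum_{j\geq0}2^{-j\gamma}\big)\prod_{k=1}^m\|f_k\|_{(L^{p_k},L^{q_k})^{\alpha_k}(w_k;\mu)}$. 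The supremum over $r>0$ then completes the proof.

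The main obstacle is precisely this exponent bookkeeping: the target is measured against the single weight $\nu_{\vec w}$ while each input is measured against its own weight $w_k$, so the power of $\nu_{\vec w}(2^{j+1}B)$ produced by the estimate must be redistributed into powers of $w_k(2^{j+1}B)$ that match \emph{exactly} the exponents occurring in the amalgam norms of the $f_k$; this is what makes both the assumption $w_k\in A_\infty$ (which turns H\"older's inequality into a two-sided comparison) and the compatibility condition~\eqref{supp} (which aligns the exponents) essential. The remaining ingredients---the pointwise kernel bound for the global pieces, the quasi-triangle inequality for $WL^p$ (relevant when $p<1$), and the local $L^{p_k}(w_k)$-integrability of the $f_k$ coming from membership in the amalgam spaces---are routine and run as in the strong-type case.
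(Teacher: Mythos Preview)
Your proposal is correct and follows essentially the same route as the paper's proof: localize to a ball, split each $f_k$ into near and far parts, handle the all-near piece via Theorem~\ref{weak}, control the remaining pieces by the pointwise kernel bound combined with H\"older and the $A_{\vec P}$ condition, then convert powers of $\nu_{\vec w}$ into powers of the $w_k$ via the reverse Jensen inequality for $A_\infty$ weights together with the balance condition~\eqref{supp}, and finish with Minkowski in $j$ and H\"older in $k$ at the $L^q(\mu)$ level. Two minor remarks: the paper's annular decomposition of $(\mathbb R^n)^\ell\setminus(2B)^\ell$ (rather than of each factor separately) yields the pointwise bound without the extra $(j+1)^m$ factor you carry, and only the one-sided inequality $\prod_k w_k(\mathcal B)^{p/p_k}\lesssim\nu_{\vec w}(\mathcal B)$ is actually needed, since the exponent $1/\alpha-1/p-1/q$ is negative---your two-sided equivalence is true but superfluous here.
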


Our next theorem concerns norm inequalities for the multilinear commutator $\big[\Sigma\vec{b},T_\theta\big]$ with $\vec{b}\in \mathrm{BMO}^m$.
\begin{theorem}\label{mainthm:3}
Let $m\geq2$ and $\big[\Sigma\vec{b},T_\theta\big]$ be the $m$-linear commutator of $\theta$-type Calder\'on--Zygmund operator $T_{\theta}$ with $\theta$ satisfying the condition \eqref{theta1} and $\vec{b}\in\mathrm{BMO}^m$. Assume that $1<p_k\leq\alpha_k<q_k<\infty$, $k=1,2,\ldots,m$ and $p\in(1/m,\infty)$ with $1/p=\sum_{k=1}^m 1/{p_k}$, $q\in[1,\infty)$ with $1/q=\sum_{k=1}^m 1/{q_k}$ and $1/{\alpha}=\sum_{k=1}^m 1/{\alpha_k};$ $\vec{w}=(w_1,\ldots,w_m)\in A_{\vec{P}}$ with $w_1,\ldots,w_m\in A_\infty$ and $\mu\in\Delta_2$. Assume further that \eqref{supp} holds. Then there exists a constant $C>0$ such that for all $\vec{f}=(f_1,\ldots,f_m)\in(L^{p_1},L^{q_1})^{\alpha_1}(w_1;\mu)\times\cdots
\times(L^{p_m},L^{q_m})^{\alpha_m}(w_m;\mu)$,
\begin{equation*}
\big\|\big[\Sigma\vec{b},T_\theta\big](\vec{f})\big\|_{(L^p,L^q)^{\alpha}(\nu_{\vec{w}};\mu)}\leq C\prod_{k=1}^m\big\|f_k\big\|_{(L^{p_k},L^{q_k})^{\alpha_k}(w_k;\mu)}
\end{equation*}
with $\nu_{\vec{w}}=\prod_{k=1}^m w_k^{p/{p_k}}$.
\end{theorem}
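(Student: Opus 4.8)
The plan is to reduce the amalgam-space estimate to the already-known weighted Lebesgue estimate for $\big[\Sigma\vec b,T_\theta\big]$ (the $\theta$-satisfying-\eqref{theta1} strengthening of Theorem \ref{comm} announced in the earlier remark), combined with a geometric decomposition of each $f_k$ into a local part and a sum of annular pieces. Fix a ball $B=B(x_0,r)$. For each $k$, write $f_k=f_k^0+\sum_{j\ge1}f_k^j$ where $f_k^0=f_k\chi_{2B}$ and $f_k^j=f_k\chi_{2^{j+1}B\setminus 2^jB}$. By $m$-linearity of $\big[\Sigma\vec b,T_\theta\big]$, the value on $B$ splits into the $2^m$ terms $\big[\Sigma\vec b,T_\theta\big](f_1^{j_1},\dots,f_m^{j_m})$ with each $j_k\in\{0\}\cup\{1,2,\dots\}$; the main term is the purely-local one $(j_1,\dots,j_m)=(0,\dots,0)$, and all remaining terms have at least one $f_k^{j_k}$ supported away from $B$.

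The main term is handled directly: on $\chi_B\big[\Sigma\vec b,T_\theta\big](f_1^0,\dots,f_m^0)$ apply the weighted $L^{p_1}(w_1)\times\cdots\times L^{p_m}(w_m)\to L^p(\nu_{\vec w})$ bound (valid since $\vec w\in A_{\vec P}$ and $\theta$ satisfies \eqref{theta1}), giving $\big\|\chi_B[\Sigma\vec b,T_\theta](\vec f^{\,0})\big\|_{L^p(\nu_{\vec w})}\le C\|\vec b\|_{\mathrm{BMO}^m}\prod_k\|f_k\chi_{2B}\|_{L^{p_k}(w_k)}$, then multiply by the normalizing factor $\nu_{\vec w}(B)^{1/\alpha-1/p-1/q}$ and use $\nu_{\vec w}\in A_\infty$ together with the doubling of $\nu_{\vec w}$ and of the $w_k$ to pass from $B$ to $2B$. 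The condition \eqref{supp}, exactly as in the proof of Theorem \ref{mainthm:1}, is what makes the product of the $m$ local amalgam-type factors over the balls $2B$ combine with matching exponents so that, after taking $L^q(\mu)$-norms in $y=x_0$ over $r>0$, one recovers $\prod_k\|f_k\|_{(L^{p_k},L^{q_k})^{\alpha_k}(w_k;\mu)}$; here one uses H\"older's inequality in the $y$-variable with $1/q=\sum_k1/q_k$.

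For a term in which some subset $\emptyset\ne S\subseteq\{1,\dots,m\}$ has $j_k\ge1$, I would use the pointwise size/regularity of the kernel. For $x\in B$ and $y_k\in 2^{j_k+1}B\setminus 2^{j_k}B$ (for $k\in S$) and $y_k\in 2B$ (for $k\notin S$), the denominator $(\sum_k|x-y_k|)^{mn}$ is $\gtrsim (2^{\max_{k\in S}j_k}r)^{mn}$, so after inserting the factors $\prod_k|b_k(x)-b_k(y_k)|$ — each estimated via the standard BMO device $|b_k(x)-b_k(y_k)|\le |b_k(x)-(b_k)_{2^{j_k+1}B}|+|(b_k)_{2^{j_k+1}B}-(b_k)_{2B}|+\cdots\lesssim j_k\|b_k\|_\ast$ plus an averaged remainder — and applying H\"older with the $A_{\vec P}$ weights (concretely, $w_k^{1-p_k'}\in A_{mp_k'}$ from Lemma \ref{multi}), one bounds the $L^p(\nu_{\vec w})$-norm over $B$ of this term by a sum over $(j_k)_{k\in S}$ of $\Big(\prod_{k\in S}\frac{(1+j_k)}{2^{j_k n}}\cdot\frac{\nu_{\vec w}(B)^{?}}{\text{weight of }2^{j_k+1}B}\Big)\prod_k\|f_k\chi_{2^{j_k+1}B}\|_{L^{p_k}(w_k)}$, where the weighted volume ratios decay geometrically by \eqref{compare}. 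The extra logarithmic factors $(1+j_k)$ coming from the BMO terms are absorbed by this geometric decay — which is precisely why \eqref{theta1} alone suffices and no stronger hypothesis on $\theta$ is needed for this operator. Finally one again normalizes by $\nu_{\vec w}(B)^{1/\alpha-1/p-1/q}$, invokes \eqref{supp} to align exponents across $k$, takes the $L^q(\mu)$-norm in $x_0$ using $\mu\in\Delta_2$ to handle the dilated balls $2^{j_k+1}B$ (a change of variables together with doubling of $\mu$ produces a constant depending on $j_k$ that is again beaten by the geometric series), and sums the finitely many patterns $S$.

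The step I expect to be the main obstacle is the off-diagonal bookkeeping: tracking the several different dilation indices $j_k$ simultaneously, checking that the weighted-measure ratios produced by $\nu_{\vec w}\in A_\infty$ and by the individual $w_k$ (via Lemma \ref{multi}) decay fast enough to swallow both the BMO-generated factors $\prod_{k\in S}(1+j_k)$ and the $\mu$-doubling constants after the $L^q(\mu)$ integration, and confirming that \eqref{supp} still forces the exponents to match when only a proper subset $S$ of indices is "far." This is routine in spirit but delicate in the constants, and is the place where the argument genuinely uses all of the hypotheses ($A_{\vec P}$, $w_k\in A_\infty$, $\mu\in\Delta_2$, and \eqref{supp}).
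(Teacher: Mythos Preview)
Your overall architecture---local/far decomposition, weighted Lebesgue bound for the purely local piece, kernel size estimate plus BMO control for the off-diagonal pieces, then $L^q(\mu)$-integration using H\"older and \eqref{supp}---matches the paper's. But two points in your off-diagonal treatment need correction. First, you write ``inserting the factors $\prod_k|b_k(x)-b_k(y_k)|$''; that is the kernel of the \emph{iterated} commutator $[\Pi\vec b,T_\theta]$, not of $[\Sigma\vec b,T_\theta]$, whose integral representation carries $\sum_k[b_k(x)-b_k(y_k)]$. The paper exploits this by linearity: it fixes a single symbol $b$ and treats $[b,T_\theta]_1$, so only one BMO factor ever appears. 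Second, your ``standard BMO device'' $|b_k(x)-b_k(y_k)|\lesssim j_k\|b_k\|_\ast$ plus remainder is not a valid pointwise bound; neither $|b_k(x)-(b_k)_{2^{j_k+1}B}|$ nor $|b_k(y_k)-(b_k)_{2^{j_k+1}B}|$ is pointwise $O(j_k\|b_k\|_\ast)$. The paper avoids this by the algebraic split
\[
[b,T_\theta]_1(\vec f)(x)=\big[b(x)-b_B\big]\,T_\theta(\vec f)(x)-T_\theta\big((b-b_B)f_1,f_2,\dots,f_m\big)(x),
\]
which cleanly separates the $x$-dependence (handled by Lemma~\ref{BMO}(ii) with $\nu_{\vec w}\in A_\infty$) from the $y_1$-dependence (handled by H\"older against $w_1^{1-p_1'}\in A_{mp_1'}\subset A_\infty$ and the estimate \eqref{j1cb}, producing the factor $(j+1)\|b\|_\ast$).

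A further simplification the paper uses, and which you may want to adopt, is a \emph{single} dyadic index $j$ rather than a tuple $(j_k)_{k\in S}$: one writes $f_k=f_k^0+f_k^\infty$ with $f_k^\infty=f_k\chi_{(2B)^\complement}$ and then, for the far pieces, uses the geometric inclusion $(\mathbb R^n\setminus 2B)^\ell\subset\bigcup_{j\ge1}\big[(2^{j+1}B)^\ell\setminus(2^jB)^\ell\big]$, so that on each shell the kernel denominator is $\approx(2^{j+1}r)^{mn}$ regardless of which coordinates are ``far.'' This collapses your multi-index sum to a single series $\sum_{j\ge1}(j+1)\,2^{-(j+1)n\delta(1/\alpha-1/q)}$, and the bookkeeping you flagged as the main obstacle disappears.
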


For the endpoint case $p_1=\cdots=p_m=1$, we will also prove the following weak-type $L\log L$ estimate for the multilinear commutator $\big[\Sigma\vec{b},T_\theta\big]$ on the weighted amalgam spaces with multiple weights.

\begin{theorem}\label{mainthm:4}
Let $m\geq2$ and $\big[\Sigma\vec{b},T_\theta\big]$ be the $m$-linear commutator of $\theta$-type Calder\'on--Zygmund operator $T_{\theta}$ with $\theta$ satisfying the condition \eqref{theta2} and $\vec{b}\in \mathrm{BMO}^m$. Assume that $p_k=1$, $1\leq\alpha_k<q_k<\infty$, $k=1,2,\ldots,m$ and $p=1/m$, $q\in[1,\infty)$ with $1/q=\sum_{k=1}^m 1/{q_k}$ and $1/{\alpha}=\sum_{k=1}^m 1/{\alpha_k};$ $\vec{w}=(w_1,\ldots,w_m)\in A_{(1,\dots,1)}$ with $w_1,\ldots,w_m\in A_\infty$ and $\mu\in\Delta_2$. Assume further that \eqref{supp} holds. Then for any given $\lambda>0$ and any ball $B(y,r)\subset\mathbb R^n$ with $(y,r)\in\mathbb R^n\times(0,+\infty)$, there exists a constant $C>0$ independent of $\vec{f}=(f_1,\ldots,f_m)$, $B(y,r)$ and $\lambda$ such that
\begin{equation*}
\begin{split}
&\Big\|\nu_{\vec{w}}(B(y,r))^{1/{\alpha}-m-1/q}\cdot\Big[\nu_{\vec{w}}\Big(\Big\{x\in B(y,r):\big|\big[\Sigma\vec{b},T_\theta\big](\vec{f})(x)\big|>\lambda^m\Big\}\Big)\Big]^m\Big\|_{L^q(\mu)}\\
&\leq C\cdot
\prod_{k=1}^m\bigg\|\Phi\bigg(\frac{|f_k|}{\lambda}\bigg)\bigg\|_{(L\log L,L^{q_k})^{\alpha_k}(w_k;\mu)},
\end{split}
\end{equation*}
where $\nu_{\vec{w}}=\prod_{k=1}^m w_k^{1/{m}}$ and $\Phi(t):=t\cdot(1+\log^+t)$. Here the norm $\|\cdot\|_{L^q(\mu)}$ is taken with respect to the variable $y$, i.e.,
\begin{equation*}
\begin{split}
&\Big\|\nu_{\vec{w}}(B(y,r))^{1/{\alpha}-m-1/q}\cdot\Big[\nu_{\vec{w}}\Big(\Big\{x\in B(y,r):\big|\big[\Sigma\vec{b},T_\theta\big](\vec{f})(x)\big|>\lambda^m\Big\}\Big)\Big]^m\Big\|_{L^q(\mu)}\\
=&\left\{\int_{\mathbb R^n}\bigg[\nu_{\vec{w}}(B(y,r))^{1/{\alpha}-m-1/q}\cdot\Big[\nu_{\vec{w}}\Big(\Big\{x\in B(y,r):\big|\big[\Sigma\vec{b},T_\theta\big](\vec{f})(x)\big|>\lambda^m\Big\}\Big)\Big]^m\bigg]^q\mu(y)\,dy\right\}^{1/q}.
\end{split}
\end{equation*}
\end{theorem}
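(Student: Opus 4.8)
The plan is to run the localization argument used for Theorems \ref{mainthm:1}--\ref{mainthm:3}, now carried out at the $L\log L$ endpoint. By the $m$-linearity of $\big[\Sigma\vec{b},T_\theta\big]$ we may normalize $\lambda=1$ (replace each $f_k$ by $f_k/\lambda$). Fix $r>0$. For $y\in\mathbb R^n$ set $B_y:=B(y,r)$ and split $f_k=f_k^{0}+f_k^{\infty}$ with $f_k^{0}:=f_k\chi_{2B_y}$, so that
\[
\big[\Sigma\vec{b},T_\theta\big](\vec{f})=\big[\Sigma\vec{b},T_\theta\big](f_1^{0},\dots,f_m^{0})+\sum_{\vec j}\big[\Sigma\vec{b},T_\theta\big](f_1^{j_1},\dots,f_m^{j_m}),
\]
the sum running over the $2^m-1$ tuples $\vec j=(j_1,\dots,j_m)\in\{0,\infty\}^m$ with some $j_k=\infty$. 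Since $\{x:\big|\big[\Sigma\vec{b},T_\theta\big](\vec{f})(x)\big|>1\}$ is contained in the union of the level sets of the $2^m$ pieces at height $2^{-m}$, and $(\sum_i a_i)^m\lesssim\sum_i a_i^m$ over finitely many nonnegative $a_i$, it suffices to estimate each piece; the stated inequality then follows after taking $\|\cdot\|_{L^q(\mu)}$ in $y$ and the supremum over $r$.

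For the purely local piece I would apply the weighted weak $L\log L$ estimate of Theorem \ref{Wcomm} on $\mathbb R^n$ to $(f_1^{0},\dots,f_m^{0})$; absorbing $\Phi(\|\vec{b}\|_{\mathrm{BMO}^m})$ and the dilation constants (using $\Phi(at)\le a^{2}\Phi(t)$ for $a\ge1$) gives
\[
\nu_{\vec{w}}\big(\big\{x\in B_y:\big|\big[\Sigma\vec{b},T_\theta\big](f_1^{0},\dots,f_m^{0})(x)\big|>2^{-m}\big\}\big)\le C\prod_{k=1}^{m}\Big(\int_{2B_y}\Phi(|f_k(x)|)\,w_k(x)\,dx\Big)^{1/m}.
\]
Raising to the $m$-th power and using the Jensen-type inequality \eqref{main esti1} replaces each $\int_{2B_y}\Phi(|f_k|)w_k$ by $w_k(2B_y)\big\|\Phi(|f_k|)\big\|_{L\log L(w_k),2B_y}$. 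Since $w_k\in A_\infty$ one has $\nu_{\vec{w}}(B)\approx\prod_k w_k(B)^{1/m}$ (Hölder's inequality in one direction; the exponential characterization of $A_\infty$ together with Jensen's inequality in the other), and since $w_k\in\Delta_2$ one may pass freely between $B_y$ and $2B_y$. As $\tfrac1m\big(\tfrac1\alpha-m-\tfrac1q\big)=\tfrac1{\alpha_k}-\tfrac1{q_k}-1$ by \eqref{supp}, the weight powers telescope, leaving the local contribution to the integrand of $\|\cdot\|_{L^q(\mu)}$ bounded by $C\prod_k w_k(B(y,2r))^{1/\alpha_k-1/q_k}\big\|\Phi(|f_k|)\big\|_{L\log L(w_k),B(y,2r)}$; the generalized Hölder inequality in $L^q(\mu)$ for the exponents $q_1,\dots,q_m$, followed by the definition of the amalgam norm (supremum over radii), bounds this by $C\prod_k\big\|\Phi(|f_k|)\big\|_{(L\log L,L^{q_k})^{\alpha_k}(w_k;\mu)}$, as required.

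For the non-local pieces I would estimate pointwise on $B_y$. Fix $\vec j\neq(0,\dots,0)$ and a commutator index $i$, write $b_i(x)-b_i(z_i)=\big(b_i(x)-(b_i)_{2B_y}\big)+\big((b_i)_{2B_y}-b_i(z_i)\big)$, and decompose $\{(z_1,\dots,z_m):\text{some }z_k\notin2B_y\}=\bigcup_{l\ge1}A_l$ into the dyadic shells $A_l=\{\max_k|z_k-y|\in[2^lr,2^{l+1}r)\}$, on which $|x-z_1|+\cdots+|x-z_m|\gtrsim2^{l}r$, so that \eqref{size} gives $|K(x,\vec z)|\lesssim(2^lr)^{-mn}$. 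The $\big((b_i)_{2B_y}-b_i\big)$-part is handled by the generalized Orlicz--Hölder inequality \eqref{holder} together with $\|b_i-(b_i)_{2^{l+1}B_y}\|_{\exp L,2^{l+1}B_y}\lesssim\|b_i\|_*$ and $|(b_i)_{2^{l+1}B_y}-(b_i)_{2B_y}|\lesssim l\|b_i\|_*$, which contributes, per shell, a factor $\lesssim(1+l)\|b_i\|_*$ times $\prod_k|2^{l+1}B_y|\,\|\Phi(|f_k|)\|_{L\log L,2^{l+1}B_y}$ (using $t\le\Phi(t)$ in the non-commutator slots); passing to weighted averages via the $A_{(1,\dots,1)}$ condition and the comparison $\nu_{\vec{w}}(2^{l+1}B_y)\approx\prod_k w_k(2^{l+1}B_y)^{1/m}$ leads to a series $\sum_{l\ge1}(1+l)\,\varrho_l\prod_k\big[w_k(2^{l+1}B_y)^{1/\alpha_k-1/q_k}\|\Phi(|f_k|)\|_{L\log L(w_k),2^{l+1}B_y}\big]$, whose coefficients $\varrho_l$ --- ratios of weighted measures of $B_y$ to those of $2^{l+1}B_y$, with exponents pinned down by \eqref{supp} --- decay geometrically in $l$ thanks to \eqref{compare} and $\alpha_k<q_k$; summing and applying Hölder in $L^q(\mu)$ again yields $\le C\prod_k\big\|\Phi(|f_k|)\big\|_{(L\log L,L^{q_k})^{\alpha_k}(w_k;\mu)}$. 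The $\big(b_i(x)-(b_i)_{2B_y}\big)$-part equals $\big(b_i(x)-(b_i)_{2B_y}\big)\cdot\big[T_\theta(\vec{f})-T_\theta(\vec{f}^{\,0})\big](x)$, with $\big|\big[T_\theta(\vec{f})-T_\theta(\vec{f}^{\,0})\big](x)\big|\le N_0(B_y)$ for all $x\in B_y$, where $N_0(B_y)$ is a geometrically convergent sum of the same type (now without the $(1+l)$ losses); hence the corresponding level set at height $\eta$ lies in $\{x\in B_y:|b_i(x)-(b_i)_{2B_y}|>\eta/N_0(B_y)\}$, and its $\nu_{\vec{w}}$-measure is estimated by the weighted John--Nirenberg inequality for the $A_\infty$ (indeed $A_1$) weight $\nu_{\vec{w}}$; combining this with the elementary inequality $e^{-s}\le s$ trades the exponential gain for one extra power of $N_0(B_y)$, which is then reabsorbed into the weighted averages exactly as above.

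The main obstacle is this last point: the factor $b_i(x)-(b_i)_{2B_y}$ is $x$-dependent and unbounded on $B_y$, so --- in contrast with $T_\theta$ itself --- one cannot simply pass to a supremum in $x$, and the distribution-function formulation of the conclusion forces one to use the weighted John--Nirenberg inequality for $\nu_{\vec{w}}$, after which one must check that the polynomial-in-$l$ growth caused by the BMO factors (for $\big[\Sigma\vec{b},T_\theta\big]$ this is merely a single factor $1+l$ per shell, since only one entry carries a $b_i$) remains dominated by the geometric decay coming from \eqref{compare} and from the strict inequalities $\alpha_k<q_k$. A secondary, routine point is the weight comparison $\nu_{\vec{w}}(B)\approx\prod_k w_k(B)^{1/m}$, which is what makes all the weight exponents balance and which uses the hypothesis $w_1,\dots,w_m\in A_\infty$.
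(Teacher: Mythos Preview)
Your outline follows essentially the same architecture as the paper's proof: reduce to a single symbol, split $f_k=f_k^0+f_k^\infty$, handle the local piece via Theorem~\ref{Wcomm} together with \eqref{main esti1} and the weight comparison \eqref{C1} (specialized to $p_k=1$), and for each non-local piece write $b(x)-b(z_1)=(b(x)-b_B)+(b_B-b(z_1))$, the second term being treated via Orlicz--H\"older and the estimate \eqref{Jensen}, with the $(1+l)$ loss absorbed by the geometric decay from \eqref{compare}.

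The one place where your proposal diverges from the paper is the treatment of the $x$-dependent factor $b(x)-b_B$ in the non-local pieces. The paper does \emph{not} use the weighted John--Nirenberg level-set estimate you propose; instead it applies Chebyshev's inequality with exponent $1/m$, reducing matters to the claim
\[
\Big(\int_{B}|b(x)-b_B|^{1/m}\nu_{\vec w}(x)\,dx\Big)^m\lesssim\|b\|_*\,\nu_{\vec w}(B)^m,
\]
which it proves via the reverse H\"older property of $\nu_{\vec w}\in A_1$. Your John--Nirenberg route is a legitimate alternative and arguably more direct, but note that the elementary inequality you need is $e^{-s}\le 1/s$ (applied to $e^{-ms}$ after raising the measure to the $m$-th power), not ``$e^{-s}\le s$''; with that correction you indeed recover exactly one factor of $N_0(B_y)$ in the bound for $[\nu_{\vec w}(\{\cdot\})]^m$, matching the paper. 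A second minor point: in the $(b_B-b(z_i))$-part it is cleaner to insert the weights first via the $A_{(1,\dots,1)}$ condition and then apply the \emph{weighted} Orlicz--H\"older inequality \eqref{Wholder}, as the paper does, rather than the unweighted version \eqref{holder} you cite --- otherwise the passage from unweighted $\|f_i\|_{L\log L,2^{l+1}B}$ to the weighted Luxemburg norm $\|\Phi(|f_i|)\|_{L\log L(w_i),2^{l+1}B}$ that you ultimately need is not straightforward.
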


\begin{rek}
From the above definitions and Theorem \ref{mainthm:4}, we can roughly say that the multilinear commutator $\big[\Sigma\vec{b},T_\theta\big]$ is bounded from $(L\log L,L^{q_1})^{\alpha_1}(w_1;\mu)\times\cdots
\times(L\log L,L^{q_m})^{\alpha_m}(w_m;\mu)$ into $(WL^p,L^q)^{\alpha}(\nu_{\vec{w}};\mu)$ with $p=1/m$.
\end{rek}
\begin{rek}
It is worth pointing out that the results mentioned above were obtained by the second author in \cite{wang} when $m=1$. Moreover, in the discrete case, the boundedness of linear $\theta$-type Calder\'on--Zygmund operators on weighted Wiener amalgam spaces $(L^p,\ell^q_w)(\mathbb R^n)$ for $1<p,q<\infty$ was proved by Nakai et al.in \cite{ki}. When $\theta(t)=t^{\varepsilon}$ for some $\varepsilon>0$, the conclusions of Theorems \ref{mainthm:1}, \ref{mainthm:2} and \ref{mainthm:3} (in some special cases) have been obtained by Wang and Liu in \cite{wangpan} and \cite{wangpan1}.
\end{rek}
In what follows, the letter $C$ always stands for a positive constant independent of the main parameters and not necessarily the same at each occurrence. When a constant depends on some important parameters $\gamma_1,\gamma_2,\dots$, we denote it by $C(\gamma_1,\gamma_2,\dots)$. The symbol $\mathbf{X}\lesssim \mathbf{Y}$ means that there is a constant $C>0$ such that $\mathbf{X}\leq C\mathbf{Y}$. We use the symbol $\mathbf{X}\approx \mathbf{Y}$ to denote the equivalence of $\mathbf{X}$ and $\mathbf{Y}$; that is, there exist two positive constants $C_1$, $C_2$ independent of $\mathbf{X}$, $\mathbf{Y}$ such that $C_1 \mathbf{Y}\leq \mathbf{X}\leq C_2 \mathbf{Y}$.

\section{Proofs of Theorems \ref{mainthm:1} and \ref{mainthm:2}}
\label{sec4}
This section is concerned with the proofs of Theorems \ref{mainthm:1} and \ref{mainthm:2}.
Before proving the main theorems of this section, we first present the following important results without proof (see \cite{grafakos2} and \cite{duoand}).

\begin{lemma}[\cite{grafakos2}]\label{Min}
Let $\big\{f_k\big\}_{k=1}^N$ be a sequence of $L^p(\nu)$ functions with $0<p<\infty$ and $\nu\in A_\infty$. Then we have
\begin{equation*}
\Big\|\sum_{k=1}^N f_k\Big\|_{L^p(\nu)}\leq\mathcal{C}(p,N)\sum_{k=1}^N\big\|f_k\big\|_{L^p(\nu)},
\end{equation*}
where $\mathcal{C}(p,N)=\max\big\{1,N^{\frac{1-p}{p}}\big\}$. More specifically, $\mathcal{C}(p,N)=1$ for $1\leq p<\infty$, and $\mathcal{C}(p,N)=N^{\frac{1-p}{p}}$ for $0<p<1$.
\end{lemma}

\begin{lemma}[\cite{grafakos2}]\label{WMin}
Let $\big\{f_k\big\}_{k=1}^N$ be a sequence of $WL^p(\nu)$ functions with $0<p<\infty$ and $\nu\in A_\infty$. Then we have
\begin{equation*}
\Big\|\sum_{k=1}^N f_k\Big\|_{WL^p(\nu)}\leq\mathcal{C'}(p,N)\sum_{k=1}^N\big\|f_k\big\|_{WL^p(\nu)},
\end{equation*}
where $\mathcal{C'}(p,N)=\max\big\{N,N^{\frac{\,1\,}{p}}\big\}$. More specifically, $\mathcal{C'}(p,N)=N$ for $1\leq p<\infty$, and $\mathcal{C'}(p,N)=N^{\frac{\,1\,}{p}}$ for $0<p<1$.
\end{lemma}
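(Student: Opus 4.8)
The plan is to reduce the inequality to a completely elementary distributional estimate: $WL^p(\nu)$ is only a quasi-normed space (hence a constant is unavoidable), but the hypothesis $\nu\in A_\infty$ plays no genuine role beyond ensuring that $\nu\,dx$ is a positive Borel measure, so I would prove the statement for an arbitrary positive measure. First I would fix $\lambda>0$ and record the pointwise set inclusion
\[
\Big\{x\in\mathbb R^n:\Big|\sum_{k=1}^N f_k(x)\Big|>\lambda\Big\}\subseteq\bigcup_{k=1}^N\Big\{x\in\mathbb R^n:|f_k(x)|>\frac{\lambda}{N}\Big\},
\]
which holds because if $|f_k(x)|\le\lambda/N$ for every $k$ then $|\sum_k f_k(x)|\le\lambda$. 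Applying the measure $\nu$ and using finite subadditivity gives
\[
\nu\Big(\Big\{\Big|\sum_{k=1}^N f_k\Big|>\lambda\Big\}\Big)\le\sum_{k=1}^N\nu\Big(\Big\{|f_k|>\frac{\lambda}{N}\Big\}\Big).
\]
Choosing the threshold $\lambda/N$ here (rather than iterating a dyadic split at level $\lambda/2$) is precisely what yields the sharp power of $N$ in the final constant.

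Next I would raise both sides to the power $1/p$, multiply through by $\lambda$, and split into two cases according to the sign of $p-1$, since the behaviour of $t\mapsto t^{1/p}$ on finite sums differs. If $1\le p<\infty$, then $1/p\le1$ and $t\mapsto t^{1/p}$ is subadditive, so $\big(\sum_k a_k\big)^{1/p}\le\sum_k a_k^{1/p}$; combining this with the previous display and writing $\lambda=N\cdot(\lambda/N)$ yields
\[
\lambda\,\nu\Big(\Big\{\Big|\sum_{k=1}^N f_k\Big|>\lambda\Big\}\Big)^{1/p}\le N\sum_{k=1}^N\frac{\lambda}{N}\,\nu\Big(\Big\{|f_k|>\frac{\lambda}{N}\Big\}\Big)^{1/p}\le N\sum_{k=1}^N\big\|f_k\big\|_{WL^p(\nu)},
\]
and taking the supremum over $\lambda>0$ gives the claim with $\mathcal{C'}(p,N)=N$. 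If $0<p<1$, then $1/p>1$ and $t\mapsto t^{1/p}$ is convex, so Jensen's inequality applied to the uniform average of $a_1,\dots,a_N$ gives $\big(\sum_k a_k\big)^{1/p}\le N^{1/p-1}\sum_k a_k^{1/p}$; the same manipulation then produces the factor $N^{1/p-1}\cdot N=N^{1/p}$, i.e. $\mathcal{C'}(p,N)=N^{1/p}$. Finally I would observe that $\max\{N,N^{1/p}\}$ equals $N$ precisely when $p\ge1$ and equals $N^{1/p}$ precisely when $0<p<1$, so the two cases assemble into the single stated bound.

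There is essentially no obstacle in this argument; it is a soft, measure-theoretic fact and no deep property of the weight is invoked. The only points worth stating with care are the choice of threshold $\lambda/N$ and the convexity-versus-subadditivity dichotomy for $t\mapsto t^{1/p}$. Since the lemma is quoted from \cite{grafakos2}, I would also remark in passing that the hypothesis $\nu\in A_\infty$ is superfluous for the proof itself — it is recorded only because the lemma will later be applied with $\nu=\nu_{\vec{w}}$ and similar $A_\infty$ weights arising from the multiple-weight theory.
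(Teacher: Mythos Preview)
Your argument is correct and is exactly the standard proof one finds in the cited reference \cite{grafakos2}. The paper itself does not prove this lemma at all: it is stated as a known result ``without proof'' and attributed to Grafakos, so there is no paper-proof to compare against. Your observation that the hypothesis $\nu\in A_\infty$ is irrelevant to the argument and is recorded only for the later applications is also accurate.
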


\begin{lemma}[\cite{duoand}]\label{Ainfty}
Let $\omega\in A_\infty$. Then for any ball $\mathcal{B}$ in $\mathbb R^n$, the following reverse Jensen's formula holds.
\begin{equation*}
\int_{\mathcal{B}}\omega(x)\,dx\leq C|\mathcal{B}|\cdot\exp\bigg(\frac{1}{|\mathcal{B}|}\int_{\mathcal{B}}\log \omega(x)\,dx\bigg).
\end{equation*}
\end{lemma}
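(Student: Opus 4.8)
The plan is to derive this reverse Jensen inequality directly from the $A_p$ balance condition satisfied by $\omega$, combined with the ordinary (convexity) Jensen inequality applied to the conjugate weight. Since $A_\infty=\bigcup_{1\le p<\infty}A_p$ and the classes $A_p$ increase with $p$, I may fix an exponent $p\in(1,\infty)$ with $\omega\in A_p$; raising the defining inequality to the $p$-th power and using $p'/p=1/(p-1)$ together with $p/p'=p-1$, this says that there is a constant $C>0$, independent of the ball, such that
\[
\Big(\frac{1}{|\mathcal B|}\int_{\mathcal B}\omega(x)\,dx\Big)\Big(\frac{1}{|\mathcal B|}\int_{\mathcal B}\omega(x)^{-1/(p-1)}\,dx\Big)^{p-1}\le C
\]
for every ball $\mathcal B\subset\mathbb R^n$.

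Before using this, I would record that $\log\omega\in L^1(\mathcal B)$ for every ball, so that all the averages below are finite and the manipulations legitimate. Its positive part is controlled by $\log^+\omega\le\omega\in L^1_{\mathrm{loc}}(\mathbb R^n)$, while its negative part satisfies $(\log\omega)^-=(p-1)\log^+\big(\omega^{-1/(p-1)}\big)\le(p-1)\,\omega^{-1/(p-1)}$, and $\omega^{-1/(p-1)}\in L^1_{\mathrm{loc}}(\mathbb R^n)$ is precisely (part of) the $A_p$ condition.

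The heart of the argument is one application of Jensen's inequality, with the convex function $t\mapsto e^{t}$ and the probability measure $dx/|\mathcal B|$ on $\mathcal B$, to the function $-\tfrac{1}{p-1}\log\omega$:
\[
\frac{1}{|\mathcal B|}\int_{\mathcal B}\omega(x)^{-1/(p-1)}\,dx
=\frac{1}{|\mathcal B|}\int_{\mathcal B}\exp\Big(-\tfrac{1}{p-1}\log\omega(x)\Big)\,dx
\ge\exp\Big(-\frac{1}{(p-1)|\mathcal B|}\int_{\mathcal B}\log\omega(x)\,dx\Big).
\]
Raising to the power $p-1$ gives $\big(\tfrac{1}{|\mathcal B|}\int_{\mathcal B}\omega^{-1/(p-1)}\big)^{p-1}\ge\exp\big(-\tfrac{1}{|\mathcal B|}\int_{\mathcal B}\log\omega\big)$; inserting this lower bound into the $A_p$ inequality above and rearranging yields
\[
\frac{1}{|\mathcal B|}\int_{\mathcal B}\omega(x)\,dx\le C\exp\Big(\frac{1}{|\mathcal B|}\int_{\mathcal B}\log\omega(x)\,dx\Big),
\]
and multiplying through by $|\mathcal B|$ is exactly the asserted formula, with the same constant $C$ (hence independent of $\mathcal B$).

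I do not anticipate a real obstacle: the whole proof is the combination of the definition of $A_p$ with elementary Jensen. The only points needing a word of care — already dispatched above — are that one is free to take $p>1$ (so the conjugate exponent is finite and $\omega^{-1/(p-1)}$ is meaningful) and that $\log\omega$ is locally integrable, which makes the Jensen step legitimate. One could alternatively run the argument through the Coifman--Fefferman reverse H\"older inequality for $A_\infty$ weights, but routing it through the $A_p$ balance condition is shorter and uses nothing beyond what the excerpt has already recalled.
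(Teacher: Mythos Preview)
Your proof is correct. The paper does not actually prove this lemma: it is quoted from Duoandikoetxea's textbook \cite{duoand} and presented explicitly ``without proof'' alongside Lemmas~\ref{Min} and~\ref{WMin}. What you have written is precisely the standard argument found in that reference (and in Garc\'ia-Cuerva--Rubio de Francia): pass from $A_\infty$ to some $A_p$ with $p>1$, apply ordinary Jensen to $\omega^{-1/(p-1)}=\exp\bigl(-\tfrac{1}{p-1}\log\omega\bigr)$, and feed the resulting lower bound back into the $A_p$ balance condition. Your care in verifying $\log\omega\in L^1_{\mathrm{loc}}$ is appropriate and the argument is complete as written.
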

We are now in a position to prove Theorems $\ref{mainthm:1}$ and $\ref{mainthm:2}$.

\begin{proof}[Proof of Theorem $\ref{mainthm:1}$]
Let $1<p_k\leq\alpha_k<q_k<\infty$ and $\vec{f}=(f_1,\dots,f_m)$ be in $(L^{p_1},L^{q_1})^{\alpha_1}(w_1;\mu)\times\cdots
\times(L^{p_m},L^{q_m})^{\alpha_m}(w_m;\mu)$ with $(w_1,\dots,w_m)\in A_{\vec{P}}$ and $\mu\in\Delta_2$. We fix $y\in\mathbb R^n$ and $r>0$, and set $B=B(y,r)$ for the ball centered at $y$ and of radius $r$, $2B=B(y,2r)$. For any $1\leq k\leq m$, we decompose $f_k$ as
\begin{equation*}
f_k=f_k\cdot\chi_{2B}+f_k\cdot\chi_{(2B)^{\complement}}:=f^0_k+f^{\infty}_k;
\end{equation*}
then we write
\begin{equation*}
\begin{split}
\prod_{k=1}^m f_k(z_k)&=\prod_{k=1}^m\Big(f^0_k(z_k)+f^{\infty}_k(z_k)\Big)\\
&=\sum_{\beta_1,\ldots,\beta_m\in\{0,\infty\}}f^{\beta_1}_1(z_1)\cdots f^{\beta_m}_m(z_m)\\
&=\prod_{k=1}^m f^0_k(z_k)+\sum_{(\beta_1,\dots,\beta_m)\in\mathfrak{L}}f^{\beta_1}_1(z_1)\cdots f^{\beta_m}_m(z_m),
\end{split}
\end{equation*}
where
\begin{equation*}
\mathfrak{L}:=\big\{(\beta_1,\dots,\beta_m):\beta_k\in\{0,\infty\},\mbox{there is at least one $\beta_k\neq0$},1\leq k\leq m\big\};
\end{equation*}
that is, each term of $\sum$ contains at least one $\beta_k\neq0$. Since $T_{\theta}$ is an $m$-linear operator, then $T_\theta(\vec{f})$ can be written as
\begin{equation*}
T_\theta(\vec{f})=T_\theta(f^0_1,\dots,f^0_m)(x)
+\sum_{(\beta_1,\dots,\beta_m)\in\mathfrak{L}}T_\theta(f^{\beta_1}_1,\ldots,f^{\beta_m}_m)(x),
\end{equation*}
for an arbitrary fixed $x\in B(y,r)$. By using Lemma \ref{Min}($N=2^m$), we have
\begin{align}\label{I}
&\nu_{\vec{w}}(B(y,r))^{1/{\alpha}-1/p-1/q}\big\|T_\theta(\vec{f})\cdot\chi_{B(y,r)}\big\|_{L^p(\nu_{\vec{w}})}\notag\\
&=\nu_{\vec{w}}(B(y,r))^{1/{\alpha}-1/p-1/q}
\bigg(\int_{B(y,r)}\big|T_\theta(f_1,\dots,f_m)(x)\big|^p\nu_{\vec{w}}(x)\,dx\bigg)^{1/p}\notag\\
&\leq C\nu_{\vec{w}}(B(y,r))^{1/{\alpha}-1/p-1/q}
\bigg(\int_{B(y,r)}\big|T_\theta(f^0_1,\dots,f^0_m)(x)\big|^p\nu_{\vec{w}}(x)\,dx\bigg)^{1/p}\notag\\
&+\sum_{(\beta_1,\dots,\beta_m)\in\mathfrak{L}}C\nu_{\vec{w}}(B(y,r))^{1/{\alpha}-1/p-1/q}
\bigg(\int_{B(y,r)}\big|T_\theta(f^{\beta_1}_1,\ldots,f^{\beta_m}_m)(x)\big|^p\nu_{\vec{w}}(x)\,dx\bigg)^{1/p}\notag\\
&:=I^{0,\dots,0}(y,r)+\sum_{(\beta_1,\dots,\beta_m)\in\mathfrak{L}} I^{\beta_1,\dots,\beta_m}(y,r).
\end{align}
By the weighted strong-type estimate of $T_{\theta}$ (see Theorem \ref{strong}), we have
\begin{align}\label{I0}
I^{0,\dots,0}(y,r)&\leq C\cdot\nu_{\vec{w}}(B(y,r))^{1/{\alpha}-1/p-1/q}
\prod_{k=1}^m\bigg(\int_{B(y,2r)}|f_k(x)|^{p_k}w_k(x)\,dx\bigg)^{1/{p_k}}\notag\\
&=C\cdot\nu_{\vec{w}}(B(y,r))^{1/{\alpha}-1/p-1/q}\prod_{k=1}^m w_k(B(y,2r))^{1/{p_k}+1/{q_k}-1/{\alpha_k}}\notag\\
&\times\prod_{k=1}^m\bigg[w_k(B(y,2r))^{1/{\alpha_k}-1/{p_k}-1/{q_k}}
\big\|f_k\cdot\chi_{B(y,2r)}\big\|_{L^{p_k}(w_k)}\bigg].
\end{align}
Let $p_1,\ldots,p_m\in[1,\infty)$ and $p\in[1/m,\infty)$ with $1/p=\sum_{k=1}^m 1/{p_k}$. We first claim that under the assumptions of Theorem \ref{mainthm:1}(or Theorem \ref{mainthm:2}), the following result holds for any ball $\mathcal{B}=\mathcal{B}(y,r)$ with $y\in\mathbb R^n$ and $r>0$:
\begin{equation}\label{wanghua1}
\prod_{k=1}^m\bigg(\int_{\mathcal{B}} w_k(x)\,dx\bigg)^{p/{p_k}}
\lesssim \int_{\mathcal{B}}\nu_{\vec{w}}(x)\,dx,
\end{equation}
provided that $w_1,\ldots,w_m\in A_\infty$ and $\nu_{\vec{w}}=\prod_{k=1}^m w_k^{p/{p_k}}$. In fact, since $w_1,\ldots,w_m\in A_\infty$, by using Lemma \ref{Ainfty}, then we have
\begin{equation*}
\begin{split}
\prod_{k=1}^m\bigg(\int_{\mathcal{B}}w_k(x)\,dx\bigg)^{p/{p_k}}
&\leq C\prod_{k=1}^m\bigg[|\mathcal{B}|\cdot\exp\bigg(\frac{1}{|\mathcal{B}|}\int_{\mathcal{B}}\log w_k(x)\,dx\bigg)\bigg]^{p/{p_k}}\\
&=C\prod_{k=1}^m\bigg[|\mathcal{B}|^{p/{p_k}}\cdot\exp\bigg(\frac{1}{|\mathcal{B}|}\int_{\mathcal{B}}\log w_k(x)^{p/{p_k}}\,dx\bigg)\bigg]\\
&= C\cdot\big(|\mathcal{B}|\big)^{\sum_{k=1}^m p/{p_k}}\cdot
\exp\bigg(\sum_{k=1}^m\frac{1}{|\mathcal{B}|}\int_{\mathcal{B}}\log w_k(x)^{p/{p_k}}\,dx\bigg).
\end{split}
\end{equation*}
Note that
\begin{equation*}
\sum_{k=1}^m p/{p_k}=p\cdot\sum_{k=1}^m 1/{p_k}=1 \quad \mathrm{and} \quad \nu_{\vec{w}}(x)=\prod_{k=1}^m w_k(x)^{p/{p_k}}.
\end{equation*}
Thus, by Jensen's inequality, we obtain
\begin{equation*}
\begin{split}
\prod_{k=1}^m\bigg(\int_{\mathcal{B}}w_k(x)\,dx\bigg)^{p/{p_k}}
&\leq C\cdot|\mathcal{B}|\cdot\exp\bigg(\frac{1}{|\mathcal{B}|}\int_{\mathcal{B}}\log\nu_{\vec{w}}(x)\,dx\bigg)\\
&\leq C\int_{\mathcal{B}}\nu_{\vec{w}}(x)\,dx.
\end{split}
\end{equation*}
This gives \eqref{wanghua1}. Based on \eqref{wanghua1}, we can further show that under the assumption \eqref{supp}, the following result
\begin{equation}\label{C1}
\prod_{k=1}^m \bigg(\int_{\mathcal{B}(y,r)}w_k(x)\,dx\bigg)^{1/{p_k}+1/{q_k}-1/{\alpha_k}}
\lesssim\bigg(\int_{\mathcal{B}(y,r)}\nu_{\vec{w}}(x)\,dx\bigg)^{1/p+1/q-1/{\alpha}}
\end{equation}
holds for any ball $\mathcal{B}(y,r)$ in $\mathbb R^n$. In fact, since
\begin{equation*}
p_1\bigg(\frac{1}{\alpha_1}-\frac{1}{q_1}\bigg)=p_2\bigg(\frac{1}{\alpha_2}-\frac{1}{q_2}\bigg)
=\cdots=p_m\bigg(\frac{1}{\alpha_m}-\frac{1}{q_m}\bigg),
\end{equation*}
then the following equality certainly holds.
\begin{equation*}
p_1\bigg(\frac{1}{p_1}+\frac{1}{q_1}-\frac{1}{\alpha_1}\bigg)=p_2\bigg(\frac{1}{p_2}+\frac{1}{q_2}-\frac{1}{\alpha_2}\bigg)
=\cdots=p_m\bigg(\frac{1}{p_m}+\frac{1}{q_m}-\frac{1}{\alpha_m}\bigg).
\end{equation*}
Note that
\begin{equation*}
\frac{\,1\,}{p}=\sum_{k=1}^m\frac{1}{p_k}, \quad \frac{\,1\,}{q}=\sum_{k=1}^m\frac{1}{q_k}\quad \mbox{and} \quad\frac{\,1\,}{\alpha}=\sum_{k=1}^m \frac{1}{\alpha_k}.
\end{equation*}
A direct computation shows that
\begin{equation*}
p_k\bigg(\frac{1}{p_k}+\frac{1}{q_k}-\frac{1}{\alpha_k}\bigg)=p\bigg(\frac{\,1\,}{p}
+\frac{\,1\,}{q}-\frac{\,1\,}{\alpha}\bigg),\qquad k=1,2,\ldots,m,
\end{equation*}
or equivalently,
\begin{equation*}
\frac{1}{p_k}+\frac{1}{q_k}-\frac{1}{\alpha_k}=\frac{p}{p_k}\cdot\bigg(\frac{\,1\,}{p}
+\frac{\,1\,}{q}-\frac{\,1\,}{\alpha}\bigg)>0,\qquad k=1,2,\ldots,m.
\end{equation*}
This fact together with \eqref{wanghua1} implies that
\begin{equation*}
\begin{split}
&\prod_{k=1}^m \bigg(\int_{\mathcal{B}(y,r)}w_k(x)\,dx\bigg)^{1/{p_k}+1/{q_k}-1/{\alpha_k}}\\
&=\prod_{k=1}^m \bigg(\int_{\mathcal{B}(y,r)}w_k(x)\,dx\bigg)^{p/{p_k}\cdot(1/p+1/q-1/{\alpha})}\\
&=\bigg[\prod_{k=1}^m\bigg(\int_{\mathcal{B}(y,r)}w_k(x)\,dx\bigg)^{p/{p_k}}\bigg]^{1/p+1/q-1/{\alpha}}\\
&\lesssim\bigg(\int_{\mathcal{B}(y,r)}\nu_{\vec{w}}(x)\,dx\bigg)^{1/p+1/q-1/{\alpha}}.
\end{split}
\end{equation*}
Thus \eqref{C1} is proved. Substituting the inequality \eqref{C1} into \eqref{I0}, we thus obtain
\begin{equation*}
\begin{split}
I^{0,\dots,0}(y,r)
&\leq C\cdot\frac{\nu_{\vec{w}}(B(y,2r))^{1/p+1/q-1/{\alpha}}}{\nu_{\vec{w}}(B(y,r))^{1/p+1/q-1/{\alpha}}}\\
&\times\prod_{k=1}^m\bigg[w_k(B(y,2r))^{1/{\alpha_k}-1/{p_k}-1/{q_k}}
\big\|f_k\cdot\chi_{B(y,2r)}\big\|_{L^{p_k}(w_k)}\bigg].
\end{split}
\end{equation*}
In view of Lemma \ref{multi}, we have that $\nu_{\vec{w}}\in A_{mp}$ with $1/m<p<\infty$. Moreover, since $1/p+1/q-1/{\alpha}>0$, then by inequality \eqref{weights}, we get
\begin{equation}\label{doubling}
\frac{\nu_{\vec{w}}(B(y,2r))^{1/p+1/q-1/{\alpha}}}{\nu_{\vec{w}}(B(y,r))^{1/p+1/q-1/{\alpha}}}\leq C,
\end{equation}
from which we conclude that
\begin{equation}\label{I1yr}
\begin{split}
I^{0,\dots,0}(y,r)
&\leq C\cdot\prod_{k=1}^m\bigg[w_k(B(y,2r))^{1/{\alpha_k}-1/{p_k}-1/{q_k}}
\big\|f_k\cdot\chi_{B(y,2r)}\big\|_{L^{p_k}(w_k)}\bigg].
\end{split}
\end{equation}
To estimate the remaining terms in \eqref{I}, let us first consider the case when $\beta_1=\cdots=\beta_m=\infty$. We use a routine geometric observation(see Figure \ref{F} below for the bilinear case $m=2$):
\begin{equation*}
\overbrace{\big(\mathbb R^n\backslash B(y,2r)\big)\times\cdots\times\big(\mathbb R^n\backslash B(y,2r)\big)}^m
\subset
(\mathbb R^n)^m\backslash B(y,2r)^m,
\end{equation*}
and
\begin{equation*}
(\mathbb R^n)^m\backslash B(y,2r)^m=\bigcup_{j=1}^\infty B(y,2^{j+1}r)^m\backslash B(y,2^{j}r)^m,
\end{equation*}
where we have used the notation $E^m=\overbrace{E\times\cdots\times E}^m$ for a measurable set $E$ and a positive integer $m$.
\begin{figure}[htbp]
\centering
\setlength{\unitlength}{1mm}
\begin{picture}(60,70)(-20,8)
\linethickness{0.8pt}
\put(-40,40){\vector(1,0){78}}
\put(0,3){\vector(0,1){76}}
\put(-5,35){\line(1,0){10}}
\multiput(-7,35)(-3,0){12}{\line(1,0){2}}
\multiput(5,35)(3,0){11}{\line(1,0){2}}
\put(-5,45){\line(1,0){10}}
\multiput(-7,45)(-3,0){12}{\line(1,0){2}}
\multiput(5,45)(3,0){11}{\line(1,0){2}}
\put(-10,30){\line(1,0){20}}
\put(-10,50){\line(1,0){20}}
\put(-20,20){\line(1,0){40}}
\put(-20,60){\line(1,0){40}}
\put(-5,35){\line(0,1){10}}
\multiput(-5,45)(0,3){11}{\line(0,1){2}}
\multiput(-5,35)(0,-3){11}{\line(0,1){2}}
\put(5,35){\line(0,1){10}}
\multiput(5,45)(0,3){11}{\line(0,1){2}}
\multiput(5,35)(0,-3){11}{\line(0,1){2}}
\put(-10,30){\line(0,1){20}}
\put(10,30){\line(0,1){20}}
\put(-20,20){\line(0,1){40}}
\put(20,20){\line(0,1){40}}
\put(35,36.5){$\mathbb R^n$}
\put(1,77){$\mathbb R^n$}
\put(-4.5,37){$\scriptstyle 2B\times 2B$}
\put(-8,31){$\scriptstyle 2^2B\times 2^2B$}
\put(-18,23){$\scriptstyle 2^3B\times 2^3B$}
\thicklines
\end{picture}
\caption{$m=2$\label{F}}
\end{figure}
By the size condition \eqref{size} of the $\theta$-type Calder\'on--Zygmund kernel $K$, for any $x\in B(y,r)$, we obtain
\begin{align}\label{111}
&\big|T_{\theta}(f^\infty_1,\ldots,f^\infty_m)(x)\big|\notag\\
&\lesssim\int_{(\mathbb R^n)^m\backslash B(y,2r)^m}
\frac{|f_1(z_1)\cdots f_m(z_m)|}{(|x-z_1|+\cdots+|x-z_m|)^{mn}}\,dz_1\cdots dz_m\notag\\
&=\sum_{j=1}^\infty\int_{B(y,2^{j+1}r)^m\backslash B(y,2^{j}r)^m}
\frac{|f_1(z_1)\cdots f_m(z_m)|}{(|x-z_1|+\cdots+|x-z_m|)^{mn}}\,dz_1\cdots dz_m\notag\\
&\lesssim\sum_{j=1}^\infty\bigg(\frac{1}{|B(y,2^{j+1}r)|^m}
\int_{B(y,2^{j+1}r)^m\backslash B(y,2^{j}r)^m}\big|f_1(z_1)\cdots f_m(z_m)\big|dz_1\cdots dz_m\bigg)\notag\\
&\leq\sum_{j=1}^\infty\bigg(\frac{1}{|B(y,2^{j+1}r)|^m}\prod_{k=1}^m
\int_{B(y,2^{j+1}r)}\big|f_k(z_k)\big|\,dz_k\bigg)\notag\\
&=\sum_{j=1}^\infty\bigg(\prod_{k=1}^m\frac{1}{|B(y,2^{j+1}r)|}
\int_{B(y,2^{j+1}r)}\big|f_k(z_k)\big|\,dz_k\bigg),
\end{align}
where we have used the fact that $|x-z_1|+\cdots+|x-z_m|\approx2^{j+1}r\approx |B(y,2^{j+1}r)|^{1/n}$ when $x\in B(y,r)$ and $(z_1,\dots,z_m)\in B(y,2^{j+1}r)^m\backslash B(y,2^{j}r)^m$. Furthermore, by using H\"older's inequality, the multiple $A_{\vec{P}}$ condition on $\vec{w}$, we can deduce that
\begin{equation*}
\begin{split}
&\big|T_{\theta}(f^\infty_1,\ldots,f^\infty_m)(x)\big|\\
&\lesssim\sum_{j=1}^\infty\Bigg\{\prod_{k=1}^m
\frac{1}{|B(y,2^{j+1}r)|}\bigg(\int_{B(y,2^{j+1}r)}\big|f_k(z_k)\big|^{p_k}w_k(z_k)\,dz_k\bigg)^{1/{p_k}}
\bigg(\int_{B(y,2^{j+1}r)}w_k(z_k)^{-p'_k/{p_k}}\,dz_k\bigg)^{1/{p'_k}}\Bigg\}\\
&\lesssim\sum_{j=1}^\infty\Bigg\{\frac{1}{|B(y,2^{j+1}r)|^m}\cdot
\frac{|B(y,2^{j+1}r)|^{1/p+\sum_{k=1}^m(1-1/{p_k})}}{\nu_{\vec{w}}(B(y,2^{j+1}r))^{1/p}}
\prod_{k=1}^m\bigg(\big\|f_k\cdot\chi_{B(y,2^{j+1}r)}\big\|_{L^{p_k}(w_k)}\bigg)\Bigg\}\\
&=\sum_{j=1}^\infty\Bigg\{\frac{1}{\nu_{\vec{w}}(B(y,2^{j+1}r))^{1/p}}\cdot
\prod_{k=1}^m\big\|f_k\cdot\chi_{B(y,2^{j+1}r)}\big\|_{L^{p_k}(w_k)}\Bigg\},
\end{split}
\end{equation*}
where in the last step we have used the fact that $1/p+\sum_{i=1}^m(1-1/{p_i})=m$. Hence, from this pointwise estimate, we obtain
\begin{equation*}
\begin{split}
I^{\infty,\dots,\infty}(y,r)&\lesssim\nu_{\vec{w}}(B(y,r))^{1/{\alpha}-1/q}\\
&\times\sum_{j=1}^\infty\Bigg\{\frac{1}{\nu_{\vec{w}}(B(y,2^{j+1}r))^{1/p}}\cdot
\prod_{k=1}^m\big\|f_k\cdot\chi_{B(y,2^{j+1}r)}\big\|_{L^{p_k}(w_k)}\Bigg\}\\
&=\nu_{\vec{w}}(B(y,r))^{1/{\alpha}-1/q}
\times\sum_{j=1}^\infty\Bigg\{\frac{\prod_{k=1}^mw_k(B(y,2^{j+1}r))^{1/{p_k}+1/{q_k}-1/{\alpha_k}} }{\nu_{\vec{w}}(B(y,2^{j+1}r))^{1/p}}\\
&\prod_{k=1}^m\bigg[w_k(B(y,2^{j+1}r))^{1/{\alpha_k}-1/{p_k}-1/{q_k}}
\big\|f_k\cdot\chi_{B(y,2^{j+1}r)}\big\|_{L^{p_k}(w_k)}\bigg]\Bigg\}\\
&\lesssim\sum_{j=1}^\infty\Bigg\{\prod_{k=1}^m\bigg[w_k(B(y,2^{j+1}r))^{1/{\alpha_k}-1/{p_k}-1/{q_k}}
\big\|f_k\cdot\chi_{B(y,2^{j+1}r)}\big\|_{L^{p_k}(w_k)}\bigg]\\
&\times\frac{\nu_{\vec{w}}(B(y,r))^{1/{\alpha}-1/q}}{\nu_{\vec{w}}(B(y,2^{j+1}r))^{1/{\alpha}-1/q}}\Bigg\},
\end{split}
\end{equation*}
where in the last step we have used the estimate \eqref{C1}. We now consider the case where exactly $\ell$ of the $\beta_k$ are $\infty$ for some $1\leq\ell<m$. We only give the arguments for one of these cases. The rest are similar and can be easily obtained from the arguments below by permuting the indices. In this situation, by the same reason as above, we also have
\begin{equation*}
\overbrace{\big(\mathbb R^n\backslash B(y,2r)\big)\times\cdots\times\big(\mathbb R^n\backslash B(y,2r)\big)}^{\ell}
\subset
(\mathbb R^n)^{\ell}\backslash B(y,2r)^{\ell},
\end{equation*}
and
\begin{equation*}
(\mathbb R^n)^{\ell}\backslash B(y,2r)^{\ell}=\bigcup_{j=1}^\infty B(y,2^{j+1}r)^{\ell}\backslash B(y,2^{j}r)^{\ell},\quad 1\leq\ell<m.
\end{equation*}
Using the size condition \eqref{size} again, we deduce that for any $x\in B(y,r)$,
\begin{align}\label{112}
&\big|T_{\theta}(f^\infty_1,\ldots,f^\infty_\ell,f^0_{\ell+1},\ldots,f^0_m)(x)\big|\notag\\
&\lesssim\int_{(\mathbb R^n)^{\ell}\backslash B(y,2r)^{\ell}}\int_{B(y,2r)^{m-\ell}}
\frac{|f_1(z_1)\cdots f_m(z_m)|}{(|x-z_1|+\cdots+|x-z_m|)^{mn}}\,dz_1\cdots dz_m\notag\\
&\lesssim\prod_{k=\ell+1}^m\int_{B(y,2r)}\big|f_k(z_k)\big|\,dz_k\notag\\
&\times\sum_{j=1}^\infty\frac{1}{|B(y,2^{j+1}r)|^m}\int_{B(y,2^{j+1}r)^\ell\backslash B(y,2^{j}r)^\ell}
\big|f_1(z_1)\cdots f_{\ell}(z_\ell)\big|\,dz_1\cdots dz_\ell\notag\\
&\leq \prod_{k=\ell+1}^m\int_{B(y,2r)}\big|f_k(z_k)\big|\,dz_k
\times\sum_{j=1}^\infty\frac{1}{|B(y,2^{j+1}r)|^m}\prod_{k=1}^{\ell}
\int_{B(y,2^{j+1}r)}\big|f_k(z_k)\big|\,dz_k\notag\\
&\leq \sum_{j=1}^\infty\bigg(\prod_{k=1}^m\frac{1}{|B(y,2^{j+1}r)|}\int_{B(y,2^{j+1}r)}\big|f_k(z_k)\big|\,dz_k\bigg),
\end{align}
where in the last inequality we have used the inclusion relation $B(y,2r)\subseteq B(y,2^{j+1}r)$ for any $j\in \mathbb{N}$, and then we arrive at the same expression considered in the previous case. Hence, we can now argue exactly as we did in the estimation of $I^{\infty,\dots,\infty}(y,r)$ to get that for all $m$-tuples $(\beta_1,\dots,\beta_m)\in\mathfrak{L}$,
\begin{align}\label{I2yr}
I^{\beta_1,\dots,\beta_m}(y,r)
&\leq C\sum_{j=1}^\infty\Bigg\{\prod_{k=1}^m\bigg[w_k(B(y,2^{j+1}r))^{1/{\alpha_k}-1/{p_k}-1/{q_k}}
\big\|f_k\cdot\chi_{B(y,2^{j+1}r)}\big\|_{L^{p_k}(w_k)}\bigg]\notag\\
&\times\frac{\nu_{\vec{w}}(B(y,r))^{1/{\alpha}-1/q}}{\nu_{\vec{w}}(B(y,2^{j+1}r))^{1/{\alpha}-1/q}}\Bigg\}.
\end{align}
Furthermore, for given $\nu_{\vec{w}}\in A_{mp}\subset A_\infty$ with $1\leq mp<\infty$, it then follows directly from the inequality \eqref{compare} with exponent $\delta>0$ that
\begin{align}\label{psi1}
\frac{\nu_{\vec{w}}(B(y,r))^{1/{\alpha}-1/q}}{\nu_{\vec{w}}(B(y,2^{j+1}r))^{1/{\alpha}-1/q}}
&\leq C\left(\frac{|B(y,r)|}{|B(y,2^{j+1}r)|}\right)^{\delta(1/{\alpha}-1/q)}.
\end{align}
Therefore, by taking the $L^q({\mu})$-norm of both sides of \eqref{I}(with respect to the variable $y$), and then using Minkowski's inequality(note that $q\geq1$), \eqref{I1yr}, \eqref{I2yr} and \eqref{psi1}, we have
\begin{equation*}
\begin{split}
&\Big\|\nu_{\vec{w}}(B(y,r))^{1/{\alpha}-1/p-1/q}
\big\|T_\theta(\vec{f})\cdot\chi_{B(y,r)}\big\|_{L^p(\nu_{\vec{w}})}\Big\|_{L^q({\mu})}\\
&\leq\Big\|I^{0,\dots,0}(y,r)\Big\|_{L^q({\mu})}+
\sum_{(\beta_1,\dots,\beta_m)\in\mathfrak{L}}\Big\|I^{\beta_1,\dots,\beta_m}(y,r)\Big\|_{L^q({\mu})}\\
&\leq C\bigg\|\prod_{k=1}^m\bigg[w_k(B(y,2r))^{1/{\alpha_k}-1/{p_k}-1/{q_k}}
\big\|f_k\cdot\chi_{B(y,2r)}\big\|_{L^{p_k}(w_k)}\bigg]\bigg\|_{L^q({\mu})}\\
&+C\cdot2^m\sum_{j=1}^\infty\bigg\|\prod_{k=1}^m\bigg[w_k(B(y,2^{j+1}r))^{1/{\alpha_k}-1/{p_k}-1/{q_k}}
\big\|f_k\cdot\chi_{B(y,2^{j+1}r)}\big\|_{L^{p_k}(w_k)}\bigg]\bigg\|_{L^q({\mu})}\\
&\times\left(\frac{|B(y,r)|}{|B(y,2^{j+1}r)|}\right)^{\delta(1/{\alpha}-1/q)}.
\end{split}
\end{equation*}
Notice that
\begin{equation*}
\frac{\,1\,}{q}=\frac{1}{q_1}+\frac{1}{q_2}+\cdots+\frac{1}{q_m}.
\end{equation*}
A further application of H\"older's inequality leads to that
\begin{equation*}
\begin{split}
&\Big\|\nu_{\vec{w}}(B(y,r))^{1/{\alpha}-1/p-1/q}
\big\|T_\theta(\vec{f})\cdot\chi_{B(y,r)}\big\|_{L^p(\nu_{\vec{w}})}\Big\|_{L^q({\mu})}\\
&\leq C\prod_{k=1}^m\Big\|w_k(B(y,2r))^{1/{\alpha_k}-1/{p_k}-1/{q_k}}
\big\|f_k\cdot\chi_{B(y,2r)}\big\|_{L^{p_k}(w_k)}\Big\|_{L^{q_k}({\mu})}\\
&+C\sum_{j=1}^\infty\prod_{k=1}^m\Big\|w_k(B(y,2^{j+1}r))^{1/{\alpha_k}-1/{p_k}-1/{q_k}}
\big\|f_k\cdot\chi_{B(y,2^{j+1}r)}\big\|_{L^{p_k}(w_k)}\Big\|_{L^{q_k}({\mu})}\\
&\times\left(\frac{|B(y,r)|}{|B(y,2^{j+1}r)|}\right)^{\delta(1/{\alpha}-1/q)}\\
&\leq C\prod_{k=1}^m\big\|f_k\big\|_{(L^{p_k},L^{q_k})^{\alpha_k}(w_k;\mu)}+
C\prod_{k=1}^m\big\|f_k\big\|_{(L^{p_k},L^{q_k})^{\alpha_k}(w_k;\mu)}
\times\sum_{j=1}^\infty\left[\frac{1}{2^{(j+1)n}}\right]^{\delta(1/{\alpha}-1/q)}\\
&\leq C\prod_{k=1}^m\big\|f_k\big\|_{(L^{p_k},L^{q_k})^{\alpha_k}(w_k;\mu)},
\end{split}
\end{equation*}
where the last series is convergent since the exponent $\delta(1/{\alpha}-1/q)$ is positive.
Thus, by taking the supremum over all $r>0$, we complete the proof of Theorem \ref{mainthm:1}.
\end{proof}

\begin{proof}[Proof of Theorem $\ref{mainthm:2}$]
Let $1\leq p_k\leq\alpha_k<q_k<\infty$ and $\vec{f}=(f_1,\dots,f_m)$ be in $(L^{p_1},L^{q_1})^{\alpha_1}(w_1;\mu)\times\cdots
\times(L^{p_m},L^{q_m})^{\alpha_m}(w_m;\mu)$ with $(w_1,\dots,w_m)\in A_{\vec{P}}$ and $\mu\in\Delta_2$. For an arbitrary ball $B=B(y,r)\subset\mathbb R^n$ with $y\in\mathbb R^n$ and $r>0$, we decompose $f_k$ as
\begin{equation*}
f_k=f_k\cdot\chi_{2B}+f_k\cdot\chi_{(2B)^{\complement}}:=f^0_k+f^{\infty}_k,\quad \mbox{for}~~k=1,2,\dots,m;
\end{equation*}
then by Lemma \ref{WMin}($N=2^m$), one can write
\begin{align}\label{Iprime}
&\nu_{\vec{w}}(B(y,r))^{1/{\alpha}-1/p-1/q}\big\|T_\theta(\vec{f})\cdot\chi_{B(y,r)}\big\|_{WL^p(\nu_{\vec{w}})}\notag\\
&=\nu_{\vec{w}}(B(y,r))^{1/{\alpha}-1/p-1/q}
\big\|T_\theta(f_1,\dots,f_m)\cdot\chi_{B(y,r)}\big\|_{WL^p(\nu_{\vec{w}})} \notag\\
&\leq C\cdot\nu_{\vec{w}}(B(y,r))^{1/{\alpha}-1/p-1/q}
\big\|T_\theta(f^0_1,\dots,f^0_m)\cdot\chi_{B(y,r)}\big\|_{WL^p(\nu_{\vec{w}})}\notag\\
&+C\sum_{(\beta_1,\dots,\beta_m)\in\mathfrak{L}}\nu_{\vec{w}}(B(y,r))^{1/{\alpha}-1/p-1/q}
\big\|T_\theta(f^{\beta_1}_1,\ldots,f^{\beta_m}_m)\cdot\chi_{B(y,r)}\big\|_{WL^p(\nu_{\vec{w}})}\notag\\
&:=I^{0,\dots,0}_\ast(y,r)+\sum_{(\beta_1,\dots,\beta_m)\in\mathfrak{L}} I^{\beta_1,\dots,\beta_m}_\ast(y,r),
\end{align}
where
\begin{equation*}
\mathfrak{L}=\big\{(\beta_1,\dots,\beta_m):\beta_k\in\{0,\infty\},\mbox{there is at least one $\beta_k\neq0$},1\leq k\leq m\big\}.
\end{equation*}
By the weighted weak-type estimate of $T_{\theta}$ (see Theorem \ref{weak}) and \eqref{C1}, we have
\begin{align}\label{II0}
I^{0,\dots,0}_\ast(y,r)&\leq C\cdot\nu_{\vec{w}}(B(y,r))^{1/{\alpha}-1/p-1/q}
\prod_{k=1}^m\bigg(\int_{B(y,2r)}|f_k(x)|^{p_k}w_k(x)\,dx\bigg)^{1/{p_k}}\notag\\
&=C\cdot\nu_{\vec{w}}(B(y,r))^{1/{\alpha}-1/p-1/q}\prod_{k=1}^m w_k(B(y,2r))^{1/{p_k}+1/{q_k}-1/{\alpha_k}}\notag\\
&\times\prod_{k=1}^m\bigg[w_k(B(y,2r))^{1/{\alpha_k}-1/{p_k}-1/{q_k}}
\big\|f_k\cdot\chi_{B(y,2r)}\big\|_{L^{p_k}(w_k)}\bigg]\notag\\
&\leq C\cdot\frac{\nu_{\vec{w}}(B(y,2r))^{1/p+1/q-1/{\alpha}}}{\nu_{\vec{w}}(B(y,r))^{1/p+1/q-1/{\alpha}}}\notag\\
&\times\prod_{k=1}^m\bigg[w_k(B(y,2r))^{1/{\alpha_k}-1/{p_k}-1/{q_k}}
\big\|f_k\cdot\chi_{B(y,2r)}\big\|_{L^{p_k}(w_k)}\bigg].
\end{align}
Moreover, in view of Lemma \ref{multi} again, we also have $\nu_{\vec{w}}\in A_{mp}$ with $1/m\leq p<\infty$. Since $1/p+1/q-1/{\alpha}>0$, then we apply inequality \eqref{weights} to obtain that
\begin{equation}\label{doubling2}
\frac{\nu_{\vec{w}}(B(y,2r))^{1/p+1/q-1/{\alpha}}}{\nu_{\vec{w}}(B(y,2r))^{1/p+1/q-1/{\alpha}}}\leq C,
\end{equation}
from which we conclude that
\begin{equation}\label{WI1yr}
\begin{split}
I^{0,\dots,0}_\ast(y,r)
&\leq C\cdot\prod_{k=1}^m\bigg[w_k(B(y,2r))^{1/{\alpha_k}-1/{p_k}-1/{q_k}}
\big\|f_k\cdot\chi_{B(y,2r)}\big\|_{L^{p_k}(w_k)}\bigg].
\end{split}
\end{equation}
In the proof of Theorem \ref{mainthm:1}, we have already established the following pointwise estimate for all $m$-tuples $(\beta_1,\dots,\beta_m)\in\mathfrak{L}$ (see \eqref{111} and \eqref{112}).
\begin{equation}
\big|T_{\theta}(f^{\beta_1}_1,\ldots,f^{\beta_m}_m)(x)\big|
\lesssim\sum_{j=1}^\infty\bigg(\prod_{k=1}^m\frac{1}{|B(y,2^{j+1}r)|}\int_{B(y,2^{j+1}r)}\big|f_k(z_k)\big|\,dz_k\bigg).
\end{equation}
Without loss of generality, we may assume that
\begin{equation*}
p_1=\cdots=p_{\ell}=\min\{p_1,\ldots,p_m\}=1\quad \mbox{and} \quad p_{\ell+1},\ldots,p_m>1
\end{equation*}
with $1\leq\ell<m$. The case that $p_1=\cdots=p_m=1$ can be dealt with similarly and more easily. All the estimates given below continue to hold for $p_1=\cdots=p_m=1$. Using H\"older's inequality and the multiple $A_{\vec{P}}$ condition on $\vec{w}$, we obtain that for any $x\in B(y,r)$,
\begin{equation*}
\begin{split}
&\big|T_{\theta}(f^{\beta_1}_1,\ldots,f^{\beta_m}_m)(x)\big|\\
&\lesssim\sum_{j=1}^\infty\bigg(\prod_{k=1}^{\ell}\frac{1}{|B(y,2^{j+1}r)|}
\int_{B(y,2^{j+1}r)}\big|f_k(z_k)\big|\,dz_k\bigg)\\
&\times
\bigg(\prod_{k=\ell+1}^{m}\frac{1}{|B(y,2^{j+1}r)|}\int_{B(y,2^{j+1}r)}\big|f_k(z_k)\big|\,dz_k\bigg)\\
&\lesssim\sum_{j=1}^\infty\prod_{k=1}^{\ell}\frac{1}{|B(y,2^{j+1}r)|}
\int_{B(y,2^{j+1}r)}\big|f_k(z_k)\big|w_k(z_k)\,dz_k
\left(\inf_{z_k\in B(y,2^{j+1}r)}w_k(z_k)\right)^{-1}\\
&\times\prod_{k=\ell+1}^{m}\frac{1}{|B(y,2^{j+1}r)|}
\bigg(\int_{B(y,2^{j+1}r)}\big|f_k(z_k)\big|^{p_k}w_k(z_k)\,dz_k\bigg)^{1/{p_k}}
\bigg(\int_{B(y,2^{j+1}r)}w_k(z_k)^{-p'_k/{p_k}}\,dz_k\bigg)^{1/{p'_k}}\\
&\lesssim\sum_{j=1}^\infty\Bigg\{\frac{1}{\nu_{\vec{w}}(B(y,2^{j+1}r))^{1/p}}\cdot
\prod_{k=1}^m\big\|f_k\cdot\chi_{B(y,2^{j+1}r)}\big\|_{L^{p_k}(w_k)}\Bigg\}.
\end{split}
\end{equation*}
Observe that $\nu_{\vec{w}}\in A_{mp}$ with $1\le mp<\infty$. Thus, it follows directly from Chebyshev's inequality and the above pointwise estimate that
\begin{align}\label{WI2yr}
&I^{\beta_1,\dots,\beta_m}_\ast(y,r)\notag\\
&\leq C\cdot\nu_{\vec{w}}(B(y,r))^{1/{\alpha}-1/p-1/q}
\bigg(\int_{B(y,r)}\big|T_\theta(f^{\beta_1}_1,\ldots,f^{\beta_m}_m)(x)\big|^p\nu_{\vec{w}}(x)\,dx\bigg)^{1/p}\notag\\
&\leq C\cdot\nu_{\vec{w}}(B(y,r))^{1/{\alpha}-1/q}
\sum_{j=1}^\infty\Bigg\{\frac{1}{\nu_{\vec{w}}(B(y,2^{j+1}r))^{1/p}}\cdot
\prod_{k=1}^m\big\|f_k\cdot\chi_{B(y,2^{j+1}r)}\big\|_{L^{p_k}(w_k)}\Bigg\}\notag\\
&\leq C\sum_{j=1}^\infty\Bigg\{\prod_{k=1}^m\bigg[w_k(B(y,2^{j+1}r))^{1/{\alpha_k}-1/{p_k}-1/{q_k}}
\big\|f_k\cdot\chi_{B(y,2^{j+1}r)}\big\|_{L^{p_k}(w_k)}\bigg]\notag\\
&\times\frac{\nu_{\vec{w}}(B(y,r))^{1/{\alpha}-1/q}}{\nu_{\vec{w}}(B(y,2^{j+1}r))^{1/{\alpha}-1/q}}\Bigg\},
\end{align}
where the last inequality follows from \eqref{C1}. Therefore, by taking the $L^q({\mu})$-norm of both sides of \eqref{Iprime}(with respect to the variable $y$), and then using Minkowski's inequality($q\geq1$), \eqref{WI1yr} and \eqref{WI2yr}, we have
\begin{equation*}
\begin{split}
&\Big\|\nu_{\vec{w}}(B(y,r))^{1/{\alpha}-1/p-1/q}
\big\|T_\theta(\vec{f})\cdot\chi_{B(y,r)}\big\|_{WL^p(\nu_{\vec{w}})}\Big\|_{L^q({\mu})}\\
&\leq\Big\|I^{0,\dots,0}_\ast(y,r)\Big\|_{L^q({\mu})}+
\sum_{(\beta_1,\dots,\beta_m)\in\mathfrak{L}}\Big\|I^{\beta_1,\dots,\beta_m}_\ast(y,r)\Big\|_{L^q({\mu})}\\
&\leq C\bigg\|\prod_{k=1}^m\bigg[w_k(B(y,2r))^{1/{\alpha_k}-1/{p_k}-1/{q_k}}
\big\|f_k\cdot\chi_{B(y,2r)}\big\|_{L^{p_k}(w_k)}\bigg]\bigg\|_{L^q({\mu})}\\
&+C\cdot2^m\sum_{j=1}^\infty\bigg\|\prod_{k=1}^m\bigg[w_k(B(y,2^{j+1}r))^{1/{\alpha_k}-1/{p_k}-1/{q_k}}
\big\|f_k\cdot\chi_{B(y,2^{j+1}r)}\big\|_{L^{p_k}(w_k)}\bigg]\bigg\|_{L^q({\mu})}\\
&\times\left(\frac{|B(y,r)|}{|B(y,2^{j+1}r)|}\right)^{\delta(1/{\alpha}-1/q)},
\end{split}
\end{equation*}
where in the last step we have used inequality \eqref{psi1}. A further application of H\"older's inequality leads to that
\begin{equation*}
\begin{split}
&\Big\|\nu_{\vec{w}}(B(y,r))^{1/{\alpha}-1/p-1/q}
\big\|T_\theta(\vec{f})\cdot\chi_{B(y,r)}\big\|_{WL^p(\nu_{\vec{w}})}\Big\|_{L^q({\mu})}\\
&\leq C\prod_{k=1}^m\Big\|w_k(B(y,2r))^{1/{\alpha_k}-1/{p_k}-1/{q_k}}
\big\|f_k\cdot\chi_{B(y,2r)}\big\|_{L^{p_k}(w_k)}\Big\|_{L^{q_k}({\mu})}\\
&+C\sum_{j=1}^\infty\prod_{k=1}^m\Big\|w_k(B(y,2^{j+1}r))^{1/{\alpha_k}-1/{p_k}-1/{q_k}}
\big\|f_k\cdot\chi_{B(y,2^{j+1}r)}\big\|_{L^{p_k}(w_k)}\Big\|_{L^{q_k}({\mu})}\\
&\times\left(\frac{|B(y,r)|}{|B(y,2^{j+1}r)|}\right)^{\delta(1/{\alpha}-1/q)}\\
&\leq C\prod_{k=1}^m\big\|f_k\big\|_{(L^{p_k},L^{q_k})^{\alpha_k}(w_k;\mu)}+
C\prod_{k=1}^m\big\|f_k\big\|_{(L^{p_k},L^{q_k})^{\alpha_k}(w_k;\mu)}
\times\sum_{j=1}^\infty\left(\frac{|B(y,r)|}{|B(y,2^{j+1}r)|}\right)^{\delta(1/{\alpha}-1/q)}\\
&\leq C\prod_{k=1}^m\big\|f_k\big\|_{(L^{p_k},L^{q_k})^{\alpha_k}(w_k;\mu)},
\end{split}
\end{equation*}
where the last inequality holds since $\delta>0$ and $(1/{\alpha}-1/q)>0$. Thus, by taking the supremum over all $r>0$, we finish the proof of Theorem \ref{mainthm:2}.
\end{proof}

Let $1\leq p_1,\dots,p_m\leq+\infty$. We say that $\vec{w}=(w_1,\ldots,w_m)\in \prod_{k=1}^m A_{p_k}$, if each $w_k$ is in $A_{p_k}$, $k=1,2,\dots,m$. By using H\"older's inequality, it is easy to check that
\begin{equation}\label{includewh}
\prod_{k=1}^m A_{p_k}\subset A_{\vec{P}}.
\end{equation}
Moreover, it was shown in \cite[Remark 7.2]{lerner} that this inclusion \eqref{includewh} is strict. It is clear that $\prod_{k=1}^m A_{p_k}\subset \prod_{k=1}^m A_\infty$. So we have
\begin{equation}\label{include}
\prod_{k=1}^m A_{p_k}\subset A_{\vec{P}}\bigcap\prod_{k=1}^m A_\infty.
\end{equation}
This leads to a natural conjecture that whether the above inclusion is also strict. Currently, it is not clear whether one may also prove the converse of this inclusion relation. Thus, as a direct consequence of Theorems \ref{mainthm:1} and \ref{mainthm:2}, we immediately obtain the following results.
\begin{corollary}
Let $m\geq 2$ and $T_{\theta}$ be an $m$-linear $\theta$-type Calder\'on--Zygmund operator with $\theta$ satisfying the condition \eqref{theta1}. Suppose that $1<p_k\leq\alpha_k<q_k<\infty$, $k=1,2,\ldots,m$ and $p\in(1/m,\infty)$ with $1/p=\sum_{k=1}^m 1/{p_k}$, $q\in[1,\infty)$ with $1/q=\sum_{k=1}^m 1/{q_k}$ and $1/{\alpha}=\sum_{k=1}^m 1/{\alpha_k};$ $\vec{w}=(w_1,\ldots,w_m)\in\prod_{k=1}^m A_{p_k}$ and $\mu\in\Delta_2$. In addition, suppose that
\begin{equation*}
p_1\bigg(\frac{1}{\alpha_1}-\frac{1}{q_1}\bigg)=p_2\bigg(\frac{1}{\alpha_2}-\frac{1}{q_2}\bigg)
=\cdots=p_m\bigg(\frac{1}{\alpha_m}-\frac{1}{q_m}\bigg).
\end{equation*}
Then there exists a constant $C>0$ such that for all $\vec{f}=(f_1,\ldots,f_m)\in(L^{p_1},L^{q_1})^{\alpha_1}(w_1;\mu)\times\cdots
\times(L^{p_m},L^{q_m})^{\alpha_m}(w_m;\mu)$,
\begin{equation*}
\big\|T_\theta(\vec{f})\big\|_{(L^p,L^q)^{\alpha}(\nu_{\vec{w}};\mu)}\leq C\prod_{k=1}^m\big\|f_k\big\|_{(L^{p_k},L^{q_k})^{\alpha_k}(w_k;\mu)}
\end{equation*}
with $\nu_{\vec{w}}=\prod_{k=1}^m w_k^{p/{p_k}}$.
\end{corollary}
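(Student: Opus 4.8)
The plan is to deduce this corollary immediately from Theorem \ref{mainthm:1}, so essentially no new work is required beyond checking that the hypotheses of that theorem are met. The only difference between the two statements is that here the multiple weight $\vec{w}=(w_1,\ldots,w_m)$ is assumed to lie in the smaller class $\prod_{k=1}^m A_{p_k}$ rather than in $A_{\vec{P}}$ with the extra assumption that each $w_k\in A_\infty$.

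First I would invoke the inclusion \eqref{include}, namely $\prod_{k=1}^m A_{p_k}\subset A_{\vec{P}}\bigcap\prod_{k=1}^m A_\infty$, which was established earlier via H\"older's inequality together with the trivial fact $A_{p_k}\subset A_\infty$. Consequently, from $\vec{w}\in\prod_{k=1}^m A_{p_k}$ we obtain simultaneously that $\vec{w}=(w_1,\ldots,w_m)\in A_{\vec{P}}$ and that $w_1,\ldots,w_m\in A_\infty$.

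Next I would observe that all the remaining hypotheses of Theorem \ref{mainthm:1} are carried over verbatim from the statement of this corollary: the ranges $1<p_k\leq\alpha_k<q_k<\infty$ for $k=1,2,\ldots,m$, the relations $1/p=\sum_{k=1}^m 1/p_k$, $1/q=\sum_{k=1}^m 1/q_k$, $1/\alpha=\sum_{k=1}^m 1/\alpha_k$ with $p\in(1/m,\infty)$ and $q\in[1,\infty)$, the condition \eqref{supp} on the exponents, and $\mu\in\Delta_2$. Applying Theorem \ref{mainthm:1} then yields the desired norm inequality
\[
\big\|T_\theta(\vec{f})\big\|_{(L^p,L^q)^{\alpha}(\nu_{\vec{w}};\mu)}\leq C\prod_{k=1}^m\big\|f_k\big\|_{(L^{p_k},L^{q_k})^{\alpha_k}(w_k;\mu)},
\]
with $\nu_{\vec{w}}=\prod_{k=1}^m w_k^{p/p_k}$, which completes the proof.

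There is essentially no obstacle here; the content of the corollary is entirely contained in the inclusion \eqref{include}, and the statement is recorded mainly for convenient reference and because the class $\prod_{k=1}^m A_{p_k}$ is the one most frequently encountered in applications. The only point worth a sentence of care is that $w_k\in A_{p_k}$ does imply $w_k\in A_\infty$ for each fixed $k$, so that the $A_\infty$ hypothesis on the individual weights in Theorem \ref{mainthm:1} is automatically satisfied; no separate verification of a doubling-type property for the $w_k$ is needed.
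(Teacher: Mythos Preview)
Your proposal is correct and matches the paper's approach exactly: the paper states that the corollary is ``a direct consequence of Theorem~\ref{mainthm:1}'' via the inclusion \eqref{include}, which is precisely what you do.
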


\begin{corollary}
Let $m\geq 2$ and $T_{\theta}$ be an $m$-linear $\theta$-type Calder\'on--Zygmund operator with $\theta$ satisfying the condition \eqref{theta1}. Suppose that $1\leq p_k\leq\alpha_k<q_k<\infty$, $k=1,2,\ldots,m$, $\min\{p_1,\ldots,p_m\}=1$ and $p\in[1/m,\infty)$ with $1/p=\sum_{k=1}^m 1/{p_k}$, $q\in[1,\infty)$ with $1/q=\sum_{k=1}^m 1/{q_k}$ and $1/{\alpha}=\sum_{k=1}^m 1/{\alpha_k};$ $\vec{w}=(w_1,\ldots,w_m)\in\prod_{k=1}^m A_{p_k}$ and $\mu\in\Delta_2$. In addition, suppose that
\begin{equation*}
p_1\bigg(\frac{1}{\alpha_1}-\frac{1}{q_1}\bigg)=p_2\bigg(\frac{1}{\alpha_2}-\frac{1}{q_2}\bigg)
=\cdots=p_m\bigg(\frac{1}{\alpha_m}-\frac{1}{q_m}\bigg).
\end{equation*}
Then there exists a constant $C>0$ such that for all $\vec{f}=(f_1,\ldots,f_m)\in(L^{p_1},L^{q_1})^{\alpha_1}(w_1;\mu)\times\cdots
\times(L^{p_m},L^{q_m})^{\alpha_m}(w_m;\mu)$,
\begin{equation*}
\big\|T_\theta(\vec{f})\big\|_{(WL^p,L^q)^{\alpha}(\nu_{\vec{w}};\mu)}\leq C\prod_{k=1}^m\big\|f_k\big\|_{(L^{p_k},L^{q_k})^{\alpha_k}(w_k;\mu)}
\end{equation*}
with $\nu_{\vec{w}}=\prod_{k=1}^m w_k^{p/{p_k}}$.
\end{corollary}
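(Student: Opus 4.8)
The plan is to obtain this corollary as an immediate consequence of Theorem~\ref{mainthm:2}; the only thing to verify is that the stronger hypothesis $\vec{w}=(w_1,\ldots,w_m)\in\prod_{k=1}^m A_{p_k}$ implies all the assumptions imposed there. All the other data of the corollary---the relations $1/p=\sum_{k=1}^m 1/p_k$, $1/q=\sum_{k=1}^m 1/q_k$, $1/\alpha=\sum_{k=1}^m 1/\alpha_k$, the ranges $1\le p_k\le\alpha_k<q_k<\infty$ with $\min_k p_k=1$, the doubling condition $\mu\in\Delta_2$, and the balance condition \eqref{supp}---are identical to those of Theorem~\ref{mainthm:2}, so nothing needs to be checked for them.

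First I would record the containment \eqref{includewh}, namely $\prod_{k=1}^m A_{p_k}\subset A_{\vec P}$. This follows from H\"older's inequality: for any ball $B$, applying H\"older with exponents $p_k/p$ (legitimate since $\sum_{k=1}^m p/p_k=1$) to $\nu_{\vec w}=\prod_{k=1}^m w_k^{p/p_k}$ gives
\[
\Big(\frac{1}{|B|}\int_B\nu_{\vec w}(x)\,dx\Big)^{1/p}\le\prod_{k=1}^m\Big(\frac{1}{|B|}\int_B w_k(x)\,dx\Big)^{1/p_k},
\]
and multiplying by $\prod_{k=1}^m\big(\frac{1}{|B|}\int_B w_k^{-p'_k/p_k}\big)^{1/p'_k}$ and pairing the factors termwise shows that the left-hand side of \eqref{multiweight} is bounded by $\prod_{k=1}^m[w_k]_{A_{p_k}}<\infty$, where for $p_k=1$ the factor $\big(\frac{1}{|B|}\int_B w_k^{-p'_k/p_k}\big)^{1/p'_k}$ is read as $(\inf_B w_k)^{-1}$ and controlled by the $A_1$ condition on $w_k$. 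Hence $\vec w\in A_{\vec P}$. Second, since $A_{p_k}\subset A_\infty$ for every $1\le p_k<\infty$, the hypothesis $w_k\in A_{p_k}$ gives $w_1,\ldots,w_m\in A_\infty$, so the full assumption ``$\vec w\in A_{\vec P}$ with $w_1,\ldots,w_m\in A_\infty$'' of Theorem~\ref{mainthm:2} holds.

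Therefore Theorem~\ref{mainthm:2} applies verbatim and yields
\[
\big\|T_\theta(\vec f)\big\|_{(WL^p,L^q)^{\alpha}(\nu_{\vec w};\mu)}\le C\prod_{k=1}^m\big\|f_k\big\|_{(L^{p_k},L^{q_k})^{\alpha_k}(w_k;\mu)},
\]
which is the desired inequality. I do not expect any genuine obstacle here: the proof is a one-line invocation of the preceding theorem once \eqref{includewh} is observed, and the interest of the statement lies instead in the fact (already noted after \eqref{include}) that the inclusion \eqref{includewh} is strict, so the corollary applies to a smaller but still natural class of multiple weights for which each component is an ordinary Muckenhoupt weight.
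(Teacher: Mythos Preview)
Your proposal is correct and takes essentially the same approach as the paper: the paper derives this corollary as an immediate consequence of Theorem~\ref{mainthm:2} by invoking the inclusion \eqref{include}, namely $\prod_{k=1}^m A_{p_k}\subset A_{\vec P}\cap\prod_{k=1}^m A_\infty$, which is exactly what you verify. Your write-up in fact gives slightly more detail on the H\"older step behind \eqref{includewh} than the paper does.
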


\section{Proofs of Theorems \ref{mainthm:3} and \ref{mainthm:4}}
\label{sec5}
As pointed out in Section \ref{sec1}, the conclusions of Theorem \ref{comm} and Theorem \ref{commwh} still hold with \eqref{theta2} replaced by the weaker growth condition \eqref{theta1}. In this section, let us begin by proving the following estimates.
\begin{theorem}\label{comm2}
Let $m\in\mathbb N$ and $\vec{b}=(b_1,\ldots,b_m)\in\mathrm{BMO}^m$; let $\theta$ satisfy the condition \eqref{theta1}.
If $p_1,\ldots,p_m\in(1,\infty)$ and $p\in(1/m,\infty)$ with $1/p=\sum_{k=1}^m 1/{p_k}$, and $\vec{w}=(w_1,\ldots,w_m)\in A_{\vec{P}}$, then there exists a constant $C>0$ independent of $\vec{f}$ such that
\begin{equation*}
\Big\|\big[\Sigma\vec{b},T_\theta\big](\vec{f})\Big\|_{L^p(\nu_{\vec{w}})}\leq
C\cdot\big\|\vec{b}\big\|_{\mathrm{BMO}^m}\prod_{k=1}^m\big\|f_k\big\|_{L^{p_k}(w_k)},
\end{equation*}
and
\begin{equation*}
\Big\|\big[\Pi\vec{b},T_\theta\big](\vec{f})\Big\|_{L^p(\nu_{\vec{w}})}\leq
C\cdot\prod_{k=1}^m\big\|b_k\big\|_{*}\prod_{k=1}^m\big\|f_k\big\|_{L^{p_k}(w_k)},
\end{equation*}
where $\nu_{\vec{w}}=\prod_{k=1}^m w_k^{p/{p_k}}$ and $\vec{f}=(f_1,\dots,f_m)\in L^{p_1}(w_1)\times\cdots\times L^{p_m}(w_m)$.
\end{theorem}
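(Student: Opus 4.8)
The plan is to follow the conjugation method of Alvarez--Bagby--Kurtz--P\'erez \cite{alvarez} (see also \cite{ding,perez3}): one writes each commutator as a Cauchy integral of an analytic family of operators which are weighted-bounded \emph{by Theorem \ref{strong} alone}, so that the extra regularity \eqref{theta2}, \eqref{theta3} on $\theta$ is never needed --- only \eqref{theta1}, which is exactly what Theorem \ref{strong} requires.

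First I would treat $\big[\Sigma\vec b,T_\theta\big]=\sum_{k=1}^m\big[b_k,T_\theta\big]_k$ one term at a time. Fix $k$ and, for $z\in\mathbb C$, introduce
\[
T^z_{\theta,k}(\vec f)(x):=e^{zb_k(x)}\,T_\theta\big(f_1,\dots,e^{-zb_k}f_k,\dots,f_m\big)(x).
\]
This family is well defined (first for bounded $b_k$ and $f_k\in C^\infty_0(\mathbb R^n)$, then in general by truncation and density), depends analytically on $z$, and satisfies $\frac{d}{dz}T^z_{\theta,k}(\vec f)\big|_{z=0}=\big[b_k,T_\theta\big]_k(\vec f)$, so Cauchy's formula gives
\[
\big[b_k,T_\theta\big]_k(\vec f)(x)=\frac{1}{2\pi i}\oint_{|z|=\varepsilon}\frac{T^z_{\theta,k}(\vec f)(x)}{z^2}\,dz ,\qquad \varepsilon>0 .
\]
The decisive point is a change of weights: writing $g_k=e^{-zb_k}f_k$ and $\vec u=\vec u(z):=(w_1,\dots,w_ke^{p_k\,\mathrm{Re}\,z\,b_k},\dots,w_m)$, one has $\nu_{\vec u}=\nu_{\vec w}\,e^{p\,\mathrm{Re}\,z\,b_k}$ and $\|g_k\|_{L^{p_k}(u_k)}=\|f_k\|_{L^{p_k}(w_k)}$, hence
\[
\big\|T^z_{\theta,k}(\vec f)\big\|_{L^p(\nu_{\vec w})}=\big\|T_\theta(f_1,\dots,g_k,\dots,f_m)\big\|_{L^p(\nu_{\vec u})}\le C\big([\vec u]_{A_{\vec P}}\big)\prod_{j=1}^m\|f_j\|_{L^{p_j}(w_j)} ,
\]
using that the constant in Theorem \ref{strong} may be taken to depend only on the $A_{\vec P}$ characteristic. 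By Lemma \ref{multi} the condition $\vec u\in A_{\vec P}$ reduces to $\nu_{\vec u}\in A_{mp}$ together with $u_j^{1-p_j'}\in A_{mp_j'}$, and since $b_k\in\mathrm{BMO}$, a John--Nirenberg argument shows that if $\varepsilon=c\,\|b_k\|_*^{-1}$ with $c=c(n,m,\vec P,[\vec w]_{A_{\vec P}})$ small enough, then $\vec u(z)\in A_{\vec P}$ with $[\vec u(z)]_{A_{\vec P}}\lesssim[\vec w]_{A_{\vec P}}$ uniformly for $|z|=\varepsilon$. Taking $L^p(\nu_{\vec w})$-norms in the Cauchy formula and applying Minkowski's integral inequality (valid as $p\ge 1$) yields $\big\|[b_k,T_\theta]_k(\vec f)\big\|_{L^p(\nu_{\vec w})}\le\frac{1}{2\pi\varepsilon}C\prod_j\|f_j\|_{L^{p_j}(w_j)}\lesssim\|b_k\|_*\prod_j\|f_j\|_{L^{p_j}(w_j)}$, and summing over $k$ settles $\big[\Sigma\vec b,T_\theta\big]$ for $p\ge 1$.

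For $\big[\Pi\vec b,T_\theta\big]$ I would run the same scheme with $m$ complex parameters: set $T^{\vec z}_\theta(\vec f)(x):=e^{\sum_{j=1}^m z_jb_j(x)}\,T_\theta\big(e^{-z_1b_1}f_1,\dots,e^{-z_mb_m}f_m\big)(x)$. Each $\partial_{z_j}$ acting at $\vec z=0$ either brings down $b_j(x)$ from the outer exponential or $-b_j$ inside the $j$-th slot, so $\partial_{z_1}\cdots\partial_{z_m}T^{\vec z}_\theta(\vec f)\big|_{\vec z=0}=\big[\Pi\vec b,T_\theta\big](\vec f)$, reproducing exactly the factor $\prod_j(b_j(x)-b_j(y_j))$ in \eqref{iteratedc}; an $m$-fold Cauchy formula over the torus $|z_j|=\varepsilon_j$ then represents the iterated commutator, and with $\vec u(\vec z):=(w_je^{p_j\,\mathrm{Re}\,z_j\,b_j})_{j=1}^m$ the same bookkeeping leaves the input norms unchanged while $\nu_{\vec u}=\nu_{\vec w}\,e^{p\sum_j\mathrm{Re}\,z_j\,b_j}$. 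Applying the weight-perturbation lemma in each variable separately allows $\varepsilon_j=c\,\|b_j\|_*^{-1}$ with $[\vec u(\vec z)]_{A_{\vec P}}\lesssim[\vec w]_{A_{\vec P}}$, and Minkowski (again $p\ge 1$) gives $\big\|[\Pi\vec b,T_\theta](\vec f)\big\|_{L^p(\nu_{\vec w})}\le\frac{C}{\varepsilon_1\cdots\varepsilon_m}\prod_j\|f_j\|_{L^{p_j}(w_j)}\lesssim\prod_j\|b_j\|_*\prod_j\|f_j\|_{L^{p_j}(w_j)}$. Finally, to reach the full range $1/m<p<1$ --- where $L^p$ is only quasi-normed and Minkowski is unavailable --- I would observe that the argument above in fact establishes, for some fixed exponent tuple with $p\ge 1$ (for instance $p_1=\cdots=p_m=m$, so $p=1$), the weighted bound for \emph{every} vector weight in the associated multilinear $A$-class, with constant controlled by its characteristic; the multilinear Rubio de Francia extrapolation theorem \cite{lerner} then transfers the estimate to all $\vec P$ with $1<p_k<\infty$ and all $\vec w\in A_{\vec P}$, which is the assertion.

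I expect the main obstacle to be the uniform weight-perturbation lemma: one must check, via the John--Nirenberg inequality, that multiplying a \emph{single} entry of the vector weight by $e^{\lambda b}$ (with $b\in\mathrm{BMO}$ and $|\lambda|$ small) preserves \emph{all} the conditions of Lemma \ref{multi} simultaneously --- that is, $\nu_{\vec w}\,e^{p\lambda b}\in A_{mp}$ and $u_j^{1-p_j'}\in A_{mp_j'}$ for every $j$ --- with $A$-characteristics bounded independently of $\lambda$ on the relevant circle, and then to track how the admissible radius $\varepsilon$ (hence the final constant) depends on $\|\vec b\|_{\mathrm{BMO}^m}$ and on $[\vec w]_{A_{\vec P}}$. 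A secondary technical point, to be dispatched first, is the rigorous justification of the analyticity of the families above and of differentiation under the integral sign, which is standard once $\vec b$ is assumed bounded and $f_k\in C^\infty_0(\mathbb R^n)$, the general case following by truncation and density.
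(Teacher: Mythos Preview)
Your proposal follows essentially the same conjugation strategy as the paper --- the Cauchy integral representation of the commutator combined with the weighted boundedness of $T_\theta$ from Theorem~\ref{strong} --- but the two implementations differ at a key technical juncture. The paper does \emph{not} verify directly that the perturbed vector weight $\vec u(z)$ lies in $A_{\vec P}$; instead it establishes two separate weighted bounds, one for $(\vec w)^{1+\varepsilon}\in A_{\vec P}$ (via self-improvement) and one for the pure exponential weight $(e^{\eta_k\cos\varphi_k b_k})_k\in\prod_k A_{p_k}\subset A_{\vec P}$, and then applies Stein--Weiss interpolation with change of measures \cite{stein} to obtain the bound for the intermediate weight $\vec w_\varphi$. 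Your proposed route --- checking the conditions of Lemma~\ref{multi} directly for $\vec u(z)$ via the stability of $A_q$ classes under multiplication by $e^{\lambda b}$ with $|\lambda|\lesssim\|b\|_*^{-1}$ --- is more elementary and avoids interpolation theory, though it requires verifying several $A_q$ conditions simultaneously.

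You are also more careful than the paper on one point: you flag that Minkowski's integral inequality requires $p\ge 1$ and propose multilinear extrapolation \cite{lerner} to reach $1/m<p<1$. The paper invokes ``Minkowski's inequality for integrals'' without comment on the range of $p$, so your extrapolation step is a genuine addition. Both approaches are correct; yours trades an interpolation argument for a perturbation-plus-extrapolation argument, which is arguably cleaner and makes the dependence of constants on $[\vec w]_{A_{\vec P}}$ and $\|\vec b\|_{\mathrm{BMO}^m}$ more transparent.
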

\begin{proof}
We only need to establish strong-type estimates for iterated commutators of multilinear $\theta$-type Calder\'on--Zygmund operators. The corresponding estimates for multilinear commutators can be obtained in a similar way. Some ideas of the proof of Theorem \ref{comm2} come from \cite{alvarez,ding} and \cite[Proposition 3.1]{perez3}.
For given $b_k\in\mathrm{BMO}(\mathbb R^n)$ with $1\leq k\leq m$, we denote $F_k(\xi)=e^{\xi[b_k(x)-b_k(y)]}$, $\xi\in\mathbb C$. Then by the analyticity of $F_k(\xi)$ on $\mathbb C$ and the Cauchy integral formula, we get
\begin{equation*}
\begin{split}
b_k(x)-b_k(y)&=F'_k(0)=\frac{1}{2\pi i}\int_{|\xi|=1}\frac{F_k(\xi)}{\xi^2}\,d\xi\\
&=\frac{1}{2\pi}\int_0^{2\pi}e^{e^{i \varphi_k}[b_k(x)-b_k(y)]}\cdot e^{-i \varphi_k}d\varphi_k.
\end{split}
\end{equation*}
Hence
\begin{equation*}
\begin{split}
&\big[\Pi\vec{b},T_\theta\big](f_1,\ldots,f_m)(x)\\
&=\int_{(\mathbb R^n)^m}\prod_{k=1}^m\big[b_k(x)-b_k(y_k)\big]K(x,y_1,\dots,y_m)f_1(y_1)\cdots f_m(y_m)\,dy_1\cdots dy_m\\
&=\int_{(\mathbb R^n)^m}\prod_{k=1}^m\bigg(\frac{1}{2\pi}\int_0^{2\pi}e^{e^{i\varphi_k}[b_k(x)-b_k(y_k)]}\cdot e^{-i\varphi_k}d\varphi_k\bigg)\\
&\times K(x,y_1,\dots,y_m)f_1(y_1)\cdots f_m(y_m)\,dy_1\cdots dy_m\\
&=\int_{(\mathbb R^n)^m}\bigg[\frac{1}{(2\pi)^m}\int_{[0,2\pi]^m}\bigg(\prod_{k=1}^me^{e^{i\varphi_k}b_k(x)}\cdot e^{-i\varphi_k}\bigg)d\varphi_1\cdots d\varphi_m\bigg]\\
&\times K(x,y_1,\dots,y_m)\prod_{k=1}^m e^{-e^{i\varphi_k}b_k(y_k)}\cdot f_k(y_k)\,dy_1\cdots dy_m\\
&=\frac{1}{(2\pi)^m}\int_{[0,2\pi]^m}T_{\theta}\Big(e^{-e^{i\varphi_1}b_1}\cdot f_1,\dots,e^{-e^{i\varphi_m}b_m}\cdot f_m\Big)(x)
\bigg(\prod_{k=1}^me^{e^{i\varphi_k}b_k(x)}\cdot e^{-i\varphi_k}\bigg)d\varphi_1\cdots d\varphi_m.
\end{split}
\end{equation*}
So we have
\begin{equation*}
\begin{split}
&\big|\big[\Pi\vec{b},T_\theta\big](f_1,\ldots,f_m)(x)\big|\\
\leq &\frac{1}{(2\pi)^m}\int_{[0,2\pi]^m}\Big|T_{\theta}\Big(e^{-e^{i\varphi_1}b_1}\cdot f_1,\dots,e^{-e^{i\varphi_m}b_m}\cdot f_m\Big)(x)\Big|\bigg(\prod_{k=1}^me^{\cos\varphi_k b_k(x)}\bigg)d\varphi_1\cdots d\varphi_m.
\end{split}
\end{equation*}
For any $(\varphi_1,\dots,\varphi_m)\in[0,2\pi]^m$, define $m$-tuples
\begin{equation*}
\vec{g}_{\varphi}=\big(g^1_{\varphi_1},\dots,g^m_{\varphi_m}\big),\quad\mbox{where}\;\; g^k_{\varphi_k}=e^{-e^{i\varphi_k}b_k}\cdot f_k,\;\; k=1,2,\dots,m,
\end{equation*}
and define
\begin{equation*}
\vec{w}_{\varphi}=\big(w^1_{\varphi_1},\dots,w^m_{\varphi_m}\big),\quad \mbox{where}\;\;
w^k_{\varphi_k}=w_k\cdot e^{p_k\cos\varphi_k b_k},\;\; k=1,2,\dots,m.
\end{equation*}
Set
\begin{equation*}
\nu^\ast_{\vec{w}}=\prod_{k=1}^m\big(w^k_{\varphi_k}\big)^{p/{p_k}}.
\end{equation*}
Then we have
\begin{equation*}
\nu^\ast_{\vec{w}}=\prod_{k=1}^m\big(w_k\cdot e^{p_k\cos\varphi_k b_k}\big)^{p/{p_k}}
=\nu_{\vec{w}}\cdot\prod_{k=1}^m e^{p\cos\varphi_k b_k}.
\end{equation*}
Using Minkowski's inequality for integrals, we thus obtain
\begin{equation*}
\begin{split}
\Big\|\big[\Pi\vec{b},T_\theta\big](\vec{f})\Big\|_{L^p(\nu_{\vec{w}})}
&\leq \frac{1}{(2\pi)^m}\int_{[0,2\pi]^m}\bigg\|T_{\theta}(\vec{g}_{\varphi})\prod_{k=1}^me^{\cos\varphi_k b_k}\bigg\|_{L^p(\nu_{\vec{w}})}
d\varphi_1\cdots d\varphi_m\\
&=\frac{1}{(2\pi)^m}\int_{[0,2\pi]^m}\big\|T_{\theta}(\vec{g}_{\varphi})\big\|_{L^p(\nu^\ast_{\vec{w}})}
d\varphi_1\cdots d\varphi_m.
\end{split}
\end{equation*}
Since $\vec{w}=(w_1,\ldots,w_m)\in A_{\vec{P}}$, we have $\nu_{\vec{w}}\in A_{mp}$ and $w_k^{1-p'_k}\in A_{mp'_k}$,$k=1,2,\ldots,m$, by using Lemma \ref{multi}. Hence, by the self-improvement property of $A_p$ weights (see \cite{duoand,garcia}), there exist some positive numbers $\varepsilon',\varepsilon_1,\dots,\varepsilon_m>0$(sufficiently small) such that
\begin{equation*}
\nu_{\vec{w}}^{1+\varepsilon'}\in A_{mp}\quad \&\quad\big(w_k^{1-p'_k}\big)^{1+\varepsilon_k}\in A_{mp'_k},~k=1,2,\dots,m.
\end{equation*}
Now choose
\begin{equation*}
\varepsilon:=\min\big\{\varepsilon',\varepsilon_1,\dots,\varepsilon_m\big\}.
\end{equation*}
Then we have
\begin{equation*}
\nu_{\vec{w}}^{1+\varepsilon}\in A_{mp}\quad \&\quad\big(w_k^{1-p'_k}\big)^{1+\varepsilon}=\big(w_k^{1+\varepsilon}\big)^{1-p'_k}\in A_{mp'_k},~k=1,2,\dots,m,
\end{equation*}
which implies $(\vec{w})^{1+\varepsilon}:=(w_1^{1+\varepsilon},\ldots,w_m^{1+\varepsilon})\in A_{\vec{P}}$ by using Lemma \ref{multi} again.
Note that
\begin{equation*}
\prod_{k=1}^m\big(w_i^{1+\epsilon}\big)^{p/{p_i}}=\bigg(\prod_{i=1}^m w_i^{p/{p_i}}\bigg)^{1+\epsilon}
=(\nu_{\vec{w}})^{1+\epsilon}.
\end{equation*}
Thus by Theorem \ref{strong},
\begin{equation}\label{inter1}
T_{\theta}:L^{p_1}(w_1^{1+\epsilon})\times\cdots\times L^{p_m}(w_m^{1+\epsilon})
\longrightarrow L^p((\nu_{\vec{w}})^{1+\epsilon}).
\end{equation}
On the other hand, for any fixed $\eta>0$, it is known that when $b\in\mathrm{BMO}(\mathbb R^n)$ with $\|b\|_{*}<\min\{C_2/{\eta},C_2(p-1)/{\eta}\}$, where $C_2$ is the constant in the John--Nirenberg inequality mentioned above, we have $e^{\eta b(x)}\in A_p$ for $1<p<\infty$
(see \cite[Lemma 1]{ding}). For $b_k\in\mathrm{BMO}(\mathbb R^n)$($1\leq k\leq m$), we now choose
\begin{equation*}
\eta_k:=\frac{p_k(1+\varepsilon)}{\varepsilon}.
\end{equation*}
For such $\eta_k>0$, we may assume that $\|b_k\|_{*}<\min\{C_2/{\eta_k},C_2(p_k-1)/{\eta_k}\}$. The general case can be proved using the linearity of $T_{\theta}$ as well. Then for any $\varphi_k\in[0,2\pi]$, we have $\cos\varphi_k\cdot b_k(x)\in \mathrm{BMO}(\mathbb R^n)$, and
\begin{equation*}
\|\cos\varphi_k\cdot b_k\|_{*}\leq\|b_k\|_{*}<\min\big\{C_2/{\eta_k},C_2(p_k-1)/{\eta_k}\big\},
\end{equation*}
which implies that each $\nu_k(x):=e^{\eta_k\cos\varphi_kb_k(x)}\in A_{p_k}$ for $1<p_k<\infty$, $k=1,2,\dots,m$. Notice that
\begin{equation*}
\prod_{k=1}^m\big(e^{\frac{1+\varepsilon}{\varepsilon}p_k\cos\varphi_kb_k}\big)^{p/{p_k}}=
\prod_{k=1}^m\big(e^{\frac{1+\varepsilon}{\varepsilon}p\cos\varphi_kb_k}\big)=
\bigg(\prod_{k=1}^m e^{p\cos\varphi_kb_k}\bigg)^{\frac{1+\varepsilon}{\varepsilon}}.
\end{equation*}
This fact along with \eqref{include} and Theorem \ref{strong} gives us that
\begin{equation}\label{inter2}
T_{\theta}:L^{p_1}\big(e^{\frac{1+\varepsilon}{\varepsilon}p_1\cos\varphi_1b_1}\big)
\times\cdots
\times L^{p_m}\big(e^{\frac{1+\varepsilon}{\varepsilon}p_m\cos\varphi_mb_m}\big)\longrightarrow
L^p\Big(\big(\prod_{k=1}^m e^{p\cos\varphi_kb_k}\big)^{\frac{1+\varepsilon}{\varepsilon}}\Big).
\end{equation}
Interpolating between \eqref{inter1} and \eqref{inter2}(see \cite{bergh,stein}) we obtain that
\begin{equation*}
T_{\theta}:L^{p_1}\big(w_1e^{p_1\cos\varphi_1b_1}\big)
\times\cdots
\times L^{p_m}\big(w_me^{p_m\cos\varphi_mb_m}\big)\longrightarrow
L^p\Big(\nu_{\vec{w}}\prod_{k=1}^m e^{p\cos\varphi_kb_k}\Big);
\end{equation*}
that is
\begin{equation}\label{inter3}
T_{\theta}:L^{p_1}\big(w^1_{\varphi_1}\big)
\times\cdots
\times L^{p_m}\big(w^m_{\varphi_m}\big)\longrightarrow
L^p\big(\nu^\ast_{\vec{w}}\big).
\end{equation}
By \eqref{inter3} we have
\begin{equation}
\big\|T_{\theta}(\vec{g}_{\varphi})\big\|_{L^p(\nu^\ast_{\vec{w}})}
\leq C\prod_{k=1}^m\big\|g^k_{\varphi_k}\big\|_{L^{p_k}(w^k_{\varphi_k})}.
\end{equation}
Since $f_k\in L^{p_k}(w_k)$, it is easy to check that for any $\varphi_k\in[0,2\pi]$,
\begin{equation*}
\begin{split}
\big\|g^k_{\varphi_k}\big\|_{L^{p_k}(w^k_{\varphi_k})}
&=\bigg(\int_{\mathbb R^n}\big|g^k_{\varphi_k}(x)\big|^{p_k} w_k(x)\cdot e^{p_k\cos\varphi_k b_k(x)}dx\bigg)^{1/{p_k}}\\
&=\bigg(\int_{\mathbb R^n}\big|f_k(x)\big|^{p_k}e^{-p_k\cos\varphi_k b_k(x)}\cdot w_k(x)\cdot e^{p_k\cos\varphi_k b_k(x)}dx\bigg)^{1/{p_k}}\\
&=\bigg(\int_{\mathbb R^n}\big|f_k(x)\big|^{p_k} w_k(x)dx\bigg)^{1/{p_k}}=\big\|f_k\big\|_{L^{p_k}(w_k)}.
\end{split}
\end{equation*}
Therefore
\begin{equation*}
\begin{split}
\Big\|\big[\Pi\vec{b},T_\theta\big](\vec{f})\Big\|_{L^p(\nu_{\vec{w}})}
&\leq C\frac{1}{(2\pi)^m}\int_{[0,2\pi]^m}\prod_{k=1}^m\big\|g^k_{\varphi_k}\big\|_{L^{p_k}(w^k_{\varphi_k})}
d\varphi_1\cdots d\varphi_m\\
&=C\frac{1}{(2\pi)^m}\int_{[0,2\pi]^m}\prod_{k=1}^m\big\|f_k\big\|_{L^{p_k}(w_k)}
d\varphi_1\cdots d\varphi_m\\
&\leq C\prod_{k=1}^m\big\|f_k\big\|_{L^{p_k}(w_k)},
\end{split}
\end{equation*}
which is our desired estimate. This gives the proof in the special case. We now proceed to the general case. To do this, we set
\begin{equation*}
\widetilde{b}_k(x):=\widetilde{\eta}_k\cdot\frac{b_k(x)}{\|b_k\|_{*}}\quad \& \quad \vec{b}_\sharp:=\big(\widetilde{b}_1,\dots,\widetilde{b}_m\big).
\end{equation*}
Here $\widetilde{\eta}_k$ is chosen so that $0<\widetilde{\eta}_k<\min\{C_2/{\eta_k},C_2(p_k-1)/{\eta_k}\}$, $k=1,2,\dots,m$. Then
\begin{equation*}
\|\widetilde{b}_k\|_{*}=\widetilde{\eta}_k<\min\big\{C_2/{\eta_k},C_2(p_k-1)/{\eta_k}\big\}.
\end{equation*}
From the previous proof, it actually follows that
\begin{equation*}
\begin{split}
\Big\|\big[\Pi\vec{b},T_\theta\big](\vec{f})\Big\|_{L^p(\nu_{\vec{w}})}
&=\Big\|\prod_{k=1}^m\frac{\|b_k\|_{*}}{\widetilde{\eta}_k}\cdot\big[\Pi\vec{b}_\sharp,T_\theta\big](\vec{f})\Big\|_{L^p(\nu_{\vec{w}})}\\
&\leq C\cdot\prod_{k=1}^m\big\|b_k\big\|_{*}\prod_{k=1}^m\big\|f_k\big\|_{L^{p_k}(w_k)}.
\end{split}
\end{equation*}
This completes the proof of Theorem \ref{comm2}.
\end{proof}

To prove our main theorems for multilinear commutators in this section, let us first set up two auxiliary lemmas about $\mathrm{BMO}$ functions, which play an important role in our proofs of main theorems.
\begin{lemma}\label{BMO}
Let $b$ be a function in $\mathrm{BMO}(\mathbb R^n)$. Then

$(i)$ For every ball $B$ in $\mathbb R^n$ and for all $j\in\mathbb{N}$,
\begin{equation*}
\big|b_{2^{j+1}B}-b_B\big|\leq C\cdot(j+1)\|b\|_*.
\end{equation*}

$(ii)$ Let $1\leq p<\infty$. For every ball $B$ in $\mathbb R^n$ and for all $\omega\in A_{\infty}$,
\begin{equation*}
\bigg(\int_B\big|b(x)-b_B\big|^p\omega(x)\,dx\bigg)^{1/p}\leq C\|b\|_*\cdot\omega(B)^{1/p}.
\end{equation*}
\end{lemma}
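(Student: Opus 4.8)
The plan is to prove the two parts separately, both being classical consequences of the definition of $\mathrm{BMO}$ combined with standard weight estimates. For part $(i)$, the idea is to telescope through the chain of balls $B\subset 2B\subset 2^2B\subset\cdots\subset 2^{j+1}B$. First I would write
\[
\big|b_{2^{j+1}B}-b_B\big|\le\sum_{i=0}^{j}\big|b_{2^{i+1}B}-b_{2^{i}B}\big|,
\]
and then estimate each summand: since $2^{i}B\subset 2^{i+1}B$ and $|2^{i+1}B|=2^{n}|2^{i}B|$, one has
\[
\big|b_{2^{i+1}B}-b_{2^{i}B}\big|\le\frac{1}{|2^{i}B|}\int_{2^{i}B}\big|b(x)-b_{2^{i+1}B}\big|\,dx
\le\frac{2^{n}}{|2^{i+1}B|}\int_{2^{i+1}B}\big|b(x)-b_{2^{i+1}B}\big|\,dx\le 2^{n}\|b\|_{*}.
\]
Summing over $i=0,1,\dots,j$ yields the bound $C(j+1)\|b\|_{*}$ with $C=2^{n}$, which is exactly the claim.

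For part $(ii)$, the plan is to invoke the John--Nirenberg inequality together with the $A_{\infty}$ comparison estimate \eqref{compare}. Recall that John--Nirenberg gives constants $C_{1},C_{2}>0$ (depending only on $n$) such that for every ball $B$ and every $\lambda>0$,
\[
\big|\{x\in B:|b(x)-b_{B}|>\lambda\}\big|\le C_{1}\,|B|\,\exp\!\Big(\!-\frac{C_{2}\lambda}{\|b\|_{*}}\Big).
\]
Since $\omega\in A_{\infty}$, by \eqref{compare} there exist $C,\delta>0$ with $\omega(E)/\omega(B)\le C(|E|/|B|)^{\delta}$ for all measurable $E\subset B$; applying this to the level set above gives
\[
\omega\big(\{x\in B:|b(x)-b_{B}|>\lambda\}\big)\le C\,\omega(B)\,\exp\!\Big(\!-\frac{C_{2}\delta\lambda}{\|b\|_{*}}\Big).
\]
Then I would use the distribution-function formula
\[
\int_{B}\big|b(x)-b_{B}\big|^{p}\omega(x)\,dx
= p\int_{0}^{\infty}\lambda^{p-1}\,\omega\big(\{x\in B:|b(x)-b_{B}|>\lambda\}\big)\,d\lambda,
\]
split the integral at $\lambda=\|b\|_{*}$, bound the first piece trivially by $\|b\|_{*}^{p}\omega(B)$ and the second piece by $C\,\omega(B)\int_{0}^{\infty}\lambda^{p-1}e^{-C_{2}\delta\lambda/\|b\|_{*}}\,d\lambda=C'\,\|b\|_{*}^{p}\,\omega(B)$ via the change of variables $\lambda\mapsto\lambda/\|b\|_{*}$. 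Taking $p$-th roots gives $\big(\int_{B}|b-b_{B}|^{p}\omega\,dx\big)^{1/p}\le C\|b\|_{*}\,\omega(B)^{1/p}$.

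The routine part is the telescoping and the distribution-function computation; the only genuine input is the John--Nirenberg inequality, which is standard and already alluded to in the text. I do not anticipate a real obstacle here — the mild point to be careful about is that the constant $\delta$ in \eqref{compare} depends on the $A_{\infty}$ constant of $\omega$, so the final constant $C$ in part $(ii)$ depends on $n$, $p$, and $[\omega]_{A_{\infty}}$, which is consistent with the statement.
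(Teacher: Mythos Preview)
Your proof is correct. The paper itself does not prove this lemma at all: it simply writes ``For the proofs of $(i)$ and $(ii)$, we refer the reader to \cite{stein2}.'' Your telescoping argument for $(i)$ is the standard one, and your argument for $(ii)$ --- John--Nirenberg combined with the $A_\infty$ comparison estimate \eqref{compare}, then the layer-cake formula --- is exactly the mechanism the paper spells out a few lines later in its proof of Lemma~\ref{BMO3}. One minor remark: the split at $\lambda=\|b\|_*$ in part $(ii)$ is not strictly necessary, since the exponential bound already yields a convergent integral over $(0,\infty)$; but it does no harm and makes the role of the trivial bound $\omega(\{\cdots\})\le\omega(B)$ transparent.
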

\begin{proof}
For the proofs of $(i)$ and $(ii)$, we refer the reader to \cite{stein2}.
\end{proof}
Based on Lemma \ref{BMO}, we now assert that for any $j\in \mathbb{N}$ and any $\omega\in A_{\infty}$, the following inequality
\begin{equation}\label{j1cb}
\bigg(\int_{2^{j+1}B}\big|b(x)-b_B\big|^p\omega(x)\,dx\bigg)^{1/p}\leq C(j+1)\|b\|_*\cdot\omega(2^{j+1}B)^{1/p}
\end{equation}
holds whenever $b\in\mathrm{BMO}(\mathbb R^n)$ and $1\leq p<\infty$. Indeed, by using Lemma \ref{BMO} $(i)$ and $(ii)$, we can deduce that
\begin{equation*}
\begin{split}
&\bigg(\int_{2^{j+1}B}\big|b(x)-b_B\big|^p\omega(x)\,dx\bigg)^{1/p}\\
&\leq \bigg(\int_{2^{j+1}B}\big|b(x)-b_{2^{j+1}B}\big|^p\omega(x)\,dx\bigg)^{1/p}
+\bigg(\int_{2^{j+1}B}\big|b_{2^{j+1}B}-b_B\big|^p\omega(x)\,dx\bigg)^{1/p}\\
&\leq C\|b\|_*\cdot \omega(2^{j+1}B)^{1/p}+C(j+1)\|b\|_*\cdot \omega(2^{j+1}B)^{1/p}\\
&\leq C(j+1)\|b\|_*\cdot \omega(2^{j+1}B)^{1/p},
\end{split}
\end{equation*}
as desired.
\begin{lemma}\label{BMO3}
Let $b$ be a function in $\mathrm{BMO}(\mathbb R^n)$. Then for any ball $B$ in $\mathbb R^n$ and any $\omega\in A_{\infty}$, we have
\begin{equation}\label{BMOwang}
\big\|b-b_{B}\big\|_{\exp L(\omega),B}\leq C\|b\|_*.
\end{equation}
\end{lemma}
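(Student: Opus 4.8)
The plan is to prove Lemma \ref{BMO3}, namely that $\|b-b_B\|_{\exp L(\omega),B}\leq C\|b\|_*$ for every ball $B$ and every $\omega\in A_\infty$, by reducing the weighted Luxemburg norm to a series estimate that can be summed using a John--Nirenberg-type inequality in the weighted setting. First I would recall the definition: $\|b-b_B\|_{\exp L(\omega),B}$ is the infimum of those $\sigma>0$ for which
\begin{equation*}
\frac{1}{\omega(B)}\int_B\Big(\exp\Big(\tfrac{|b(x)-b_B|}{\sigma}\Big)-1\Big)\omega(x)\,dx\leq 1.
\end{equation*}
Expanding $\exp(t)-1=\sum_{\ell\geq1}t^\ell/\ell!$ and using the monotone convergence theorem, it suffices to show that for $\sigma=c_n\|b\|_*$ with a suitable dimensional (and $A_\infty$) constant,
\begin{equation*}
\sum_{\ell=1}^{\infty}\frac{1}{\ell!\,\sigma^\ell}\cdot\frac{1}{\omega(B)}\int_B|b(x)-b_B|^\ell\omega(x)\,dx\leq 1.
\end{equation*}

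The key input is a weighted $L^\ell$ bound on $b-b_B$. By Lemma \ref{BMO}$(ii)$ applied with $p=\ell$, we have $\big(\int_B|b(x)-b_B|^\ell\omega(x)\,dx\big)^{1/\ell}\leq C\|b\|_*\cdot\omega(B)^{1/\ell}$; however, the constant $C$ there a priori depends on $\ell$, so I must be careful. The cleaner route is to invoke the weighted John--Nirenberg inequality: for $\omega\in A_\infty$ there exist constants $c_1,c_2>0$ (depending only on $n$ and the $A_\infty$ constant of $\omega$) such that
\begin{equation*}
\omega\big(\{x\in B:|b(x)-b_B|>\lambda\}\big)\leq c_1\,e^{-c_2\lambda/\|b\|_*}\,\omega(B)
\end{equation*}
for all $\lambda>0$. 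From this, the distribution-function formula gives, for any $\sigma<c_2/\|b\|_*$ (say $\sigma$ slightly larger than $2\|b\|_*/c_2$ inverted appropriately),
\begin{equation*}
\frac{1}{\omega(B)}\int_B\exp\Big(\tfrac{c_2|b(x)-b_B|}{2\|b\|_*}\Big)\omega(x)\,dx
= 1+\frac{c_2}{2\|b\|_*\,\omega(B)}\int_0^\infty e^{c_2\lambda/(2\|b\|_*)}\omega\big(\{|b-b_B|>\lambda\}\cap B\big)\,d\lambda,
\end{equation*}
and the integral is dominated by $c_1\int_0^\infty e^{-c_2\lambda/(2\|b\|_*)}\,d\lambda<\infty$, giving a bound by an absolute constant. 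Choosing $\sigma=C\|b\|_*$ large enough (depending on $c_1,c_2$) makes the whole average $\leq 2$, hence subtracting the $1$ makes $\frac{1}{\omega(B)}\int_B(\exp(|b-b_B|/\sigma)-1)\omega\,dx\leq 1$, which is exactly the condition in the Luxemburg infimum. Therefore $\|b-b_B\|_{\exp L(\omega),B}\leq C\|b\|_*$.

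The main obstacle is justifying the weighted John--Nirenberg inequality, or equivalently controlling the $\ell$-dependence of the constant in Lemma \ref{BMO}$(ii)$; this is where the hypothesis $\omega\in A_\infty$ is genuinely used (through the comparability \eqref{compare} of $\omega$-measure and Lebesgue measure applied to the level sets furnished by the classical John--Nirenberg inequality). Concretely, one writes $\{x\in B:|b(x)-b_B|>\lambda\}$ as a union of Calderón--Zygmund stopping-time cubes at height $\sim\lambda/\|b\|_*$ whose total Lebesgue measure is $\leq e^{-c\lambda/\|b\|_*}|B|$, and then uses \eqref{compare} with its exponent $\delta$ to pass to $\omega$-measure, picking up $\big(e^{-c\lambda/\|b\|_*}\big)^\delta\omega(B)$; this is the weighted John--Nirenberg bound with $c_2=c\delta$. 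Since this argument is standard and presumably appears in the references already cited (e.g. \cite{stein2,duoand}), I would state it as a known fact and cite it, then carry out the short computation above. Everything else is routine: the interchange of sum and integral, and the elementary estimate of $\int_0^\infty e^{-at}\,dt$.
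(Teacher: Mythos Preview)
Your proposal is correct and follows essentially the same route as the paper: invoke the classical John--Nirenberg inequality for the Lebesgue measure of the level set $\{x\in B:|b(x)-b_B|>\lambda\}$, transfer it to $\omega$-measure via the $A_\infty$ comparison property \eqref{compare} (which directly gives $\omega(E)/\omega(B)\leq C(|E|/|B|)^\delta$ for $E\subset B$, so no Calder\'on--Zygmund decomposition is needed here), and conclude exponential integrability with respect to $\omega$, which is equivalent to the Luxemburg norm bound. The series-expansion detour you sketch is unnecessary once you have the weighted distributional estimate, as you yourself recognize.
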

\begin{proof}
By the well-known John--Nirenberg's inequality (see \cite{john,duoand}), we know that there exist two positive constants $C_1$ and $C_2$, depending only on the dimension $n$, such that for any $\lambda>0$,
\begin{equation*}
\big|\big\{x\in B:|b(x)-b_B|>\lambda\big\}\big|\leq C_1|B|\exp\bigg\{-\frac{C_2\lambda}{\|b\|_{*}}\bigg\}.
\end{equation*}
This result shows that in some sense logarithmic growth is the maximum possible for BMO functions (more precisely, we can take $C_1=\sqrt{2}$, $C_2=\log 2/{2^{n+2}}$, see \cite[p.123--125]{duoand}). Applying the comparison property \eqref{compare} of $A_{\infty}$ weights, there is a positive number $\delta>0$ such that
\begin{equation*}
\omega\big(\big\{x\in B:|b(x)-b_B|>\lambda\big\}\big)\leq C_1\omega(B)\exp\bigg\{-\frac{C_2\delta\lambda}{\|b\|_{*}}\bigg\}.
\end{equation*}
From this, it follows that ($c_0$ and $C$ are two positive constants which are independent of the choice of $B$)
\begin{equation*}
\frac{1}{\omega(B)}\int_B\exp\bigg(\frac{|b(y)-b_B|}{c_0\|b\|_*}\bigg)\omega(y)\,dy\leq C,
\end{equation*}
which is equivalent to \eqref{BMOwang}.
\end{proof}
Furthermore, by \eqref{BMOwang} and Lemma \ref{BMO}$(i)$, it is easy to check that for any $\omega\in A_\infty$ and any given ball $B$ in $\mathbb R^n$,
\begin{equation}\label{Jensen}
\big\|b-b_{B}\big\|_{\exp L(\omega),2^{j+1}B}\leq C(j+1)\|b\|_\ast,\quad j\in \mathbb{N}.
\end{equation}

We are now in a position to give the proofs of Theorems \ref{mainthm:3} and \ref{mainthm:4}.
\begin{proof}[Proof of Theorem $\ref{mainthm:3}$]
Let $1<p_k\leq\alpha_k<q_k<\infty$ and $\vec{f}=(f_1,\dots,f_m)$ be in $(L^{p_1},L^{q_1})^{\alpha_1}(w_1;\mu)\times\cdots
\times(L^{p_m},L^{q_m})^{\alpha_m}(w_m;\mu)$ with $(w_1,\dots,w_m)\in A_{\vec{P}}$ and $\mu\in\Delta_2$.
As was pointed out in \cite{lerner}, by linearity it is enough to consider the multilinear commutator $[\Sigma b,T_{\theta}]$ with only one symbol. Without loss of generality, we fix $b\in\mathrm{BMO}(\mathbb R^n)$ and then consider the commutator operator $[b,T_{\theta}]_1$ given by
\begin{equation*}
\big[b,T_\theta\big]_1(\vec{f})(x)=b(x)\cdot T_\theta(f_1,f_2,\dots,f_m)(x)-T_{\theta}(bf_1,f_2,\dots,f_m)(x).
\end{equation*}
For any fixed ball $B=B(y,r)\subset\mathbb R^n$ with $y\in\mathbb R^n$ and $r\in(0,+\infty)$, as before, we split each $f_k$ as
\begin{equation*}
f_k=f^0_k+f^{\infty}_k,\quad k=1,2,\dots,m,
\end{equation*}
where $f^0_k=f_k\cdot\chi_{2B}$, $f^{\infty}_k=f_k\cdot\chi_{(2B)^{\complement}}$ and $2B=B(y,2r)\subset\mathbb R^n$. Let $\mathfrak{L}$ be the same as before. By using Lemma \ref{Min}($N=2^m$), we can write
\begin{align}\label{J}
&\nu_{\vec{w}}(B(y,r))^{1/{\alpha}-1/p-1/q}
\big\|[b,T_\theta]_1(\vec{f})\cdot\chi_{B(y,r)}\big\|_{L^p(\nu_{\vec{w}})}\notag\\
&=\nu_{\vec{w}}(B(y,r))^{1/{\alpha}-1/p-1/q}
\bigg(\int_{B(y,r)}\big|[b,T_\theta]_1(f_1,\dots,f_m)(x)\big|^p\nu_{\vec{w}}(x)\,dx\bigg)^{1/p}\notag\\
&\leq C\cdot\nu_{\vec{w}}(B(y,r))^{1/{\alpha}-1/p-1/q}
\bigg(\int_{B(y,r)}\big|[b,T_\theta]_1(f^0_1,\dots,f^0_m)(x)\big|^p\nu_{\vec{w}}(x)\,dx\bigg)^{1/p}\notag\\
&+C\sum_{(\beta_1,\dots,\beta_m)\in\mathfrak{L}}\nu_{\vec{w}}(B(y,r))^{1/{\alpha}-1/p-1/q}
\bigg(\int_{B(y,r)}\big|[b,T_\theta]_1(f^{\beta_1}_1,\ldots,f^{\beta_m}_m)(x)\big|^p\nu_{\vec{w}}(x)\,dx\bigg)^{1/p}\notag\\
&:=J^{0,\dots,0}(y,r)+\sum_{(\beta_1,\dots,\beta_m)\in\mathfrak{L}} J^{\beta_1,\dots,\beta_m}(y,r).
\end{align}
To estimate the first term in \eqref{J}, applying Theorem \ref{comm2} along with \eqref{C1} and \eqref{doubling}, we get
\begin{align}\label{J1yr}
J^{0,\dots,0}(y,r)&\leq C\cdot\nu_{\vec{w}}(B(y,r))^{1/{\alpha}-1/p-1/q}
\prod_{k=1}^m\bigg(\int_{B(y,2r)}|f_k(x)|^{p_k}w_k(x)\,dx\bigg)^{1/{p_k}}\notag\\
&=C\cdot\nu_{\vec{w}}(B(y,r))^{1/{\alpha}-1/p-1/q}
\prod_{k=1}^m w_k(B(y,2r))^{1/{p_k}+1/{q_k}-1/{\alpha_k}}\notag\\
&\times\prod_{k=1}^m\bigg[w_k(B(y,2r))^{1/{\alpha_k}-1/{p_k}-1/{q_k}}
\big\|f_k\cdot\chi_{B(y,2r)}\big\|_{L^{p_k}(w_k)}\bigg]\notag\\
&\leq C\cdot\frac{\nu_{\vec{w}}(B(y,2r))^{1/p+1/q-1/{\alpha}}}{\nu_{\vec{w}}(B(y,r))^{1/p+1/q-1/{\alpha}}}\notag\\
&\times\prod_{k=1}^m\bigg[w_k(B(y,2r))^{1/{\alpha_k}-1/{p_k}-1/{q_k}}
\big\|f_k\cdot\chi_{B(y,2r)}\big\|_{L^{p_k}(w_k)}\bigg]\\
&\leq C\cdot\prod_{k=1}^m\bigg[w_k(B(y,2r))^{1/{\alpha_k}-1/{p_k}-1/{q_k}}
\big\|f_k\cdot\chi_{B(y,2r)}\big\|_{L^{p_k}(w_k)}\bigg]\notag.
\end{align}
To estimate the remaining terms in \eqref{J}, let us first deal with the case when $\beta_1=\cdots=\beta_m=\infty$. It is easily seen that for any $x\in B(y,r)$,
\begin{equation*}
\big[b,T_\theta\big]_1(\vec{f})(x)=[b(x)-b_B]\cdot T_\theta(f_1,f_2,\dots,f_m)(x)-T_{\theta}((b-b_B)f_1,f_2,\dots,f_m)(x).
\end{equation*}
From it, the term $J^{\infty,\dots,\infty}(y,r)$ will be divided into two parts.
\begin{equation*}
\begin{split}
&J^{\infty,\dots,\infty}(y,r)\\
&\leq C\cdot\nu_{\vec{w}}(B(y,r))^{1/{\alpha}-1/p-1/q}
\bigg(\int_{B(y,r)}\big|[b(x)-b_B]\cdot T_\theta(f^\infty_1,f^\infty_2,\dots,f^\infty_m)(x)\big|^p\nu_{\vec{w}}(x)\,dx\bigg)^{1/p}\\
&+C\cdot\nu_{\vec{w}}(B(y,r))^{1/{\alpha}-1/p-1/q}
\bigg(\int_{B(y,r)}\big|T_\theta((b-b_B)f^\infty_1,f^\infty_2,\dots,f^\infty_m)(x)\big|^p
\nu_{\vec{w}}(x)\,dx\bigg)^{1/p}\\
&:=J^{\infty,\dots,\infty}_{\star}(y,r)+J^{\infty,\dots,\infty}_{\star\star}(y,r).
\end{split}
\end{equation*}
We analyze each term separately. In the proof of Theorem \ref{mainthm:1}, we have already established the following estimate for $T_{\theta}(f^\infty_1,\ldots,f^\infty_m)$ (see \eqref{111}).
\begin{equation*}
\big|T_{\theta}(f^\infty_1,f^\infty_2,\ldots,f^\infty_m)(x)\big|
\lesssim\sum_{j=1}^\infty\bigg(\prod_{k=1}^m\frac{1}{|B(y,2^{j+1}r)|}
\int_{B(y,2^{j+1}r)}\big|f_k(z_k)\big|\,dz_k\bigg).
\end{equation*}
Note that $\nu_{\vec{w}}\in A_{mp}\subset A_{\infty}$. Consequently, from $(ii)$ of Lemma \ref{BMO}, it follows that
\begin{equation*}
\begin{split}
J^{\infty,\dots,\infty}_{\star}(y,r)
&\leq C\cdot\nu_{\vec{w}}(B(y,r))^{1/{\alpha}-1/p-1/q}
\sum_{j=1}^\infty\bigg(\prod_{k=1}^m\frac{1}{|B(y,2^{j+1}r)|}
\int_{B(y,2^{j+1}r)}\big|f_k(z_k)\big|\,dz_k\bigg)\\
&\times\bigg(\int_{B(y,r)}\big|b(x)-b_{B(y,r)}\big|^p\nu_{\vec{w}}(x)\,dx\bigg)^{1/p}\\
&\leq C\|b\|_\ast\cdot\nu_{\vec{w}}(B(y,r))^{1/{\alpha}-1/q}
\sum_{j=1}^\infty\bigg(\prod_{k=1}^m\frac{1}{|B(y,2^{j+1}r)|}
\int_{B(y,2^{j+1}r)}\big|f_k(z_k)\big|\,dz_k\bigg).
\end{split}
\end{equation*}
We now proceed in the same way as in the estimation of $I^{\infty,\dots,\infty}(y,r)$, and obtain
\begin{equation*}
\begin{split}
J^{\infty,\dots,\infty}_{\star}(y,r)&\lesssim\|b\|_\ast\cdot\nu_{\vec{w}}(B(y,r))^{1/{\alpha}-1/q}\\
&\times\sum_{j=1}^\infty\Bigg\{\frac{1}{\nu_{\vec{w}}(B(y,2^{j+1}r))^{1/p}}\cdot
\prod_{k=1}^m\big\|f_k\cdot\chi_{B(y,2^{j+1}r)}\big\|_{L^{p_k}(w_k)}\Bigg\}\\
&=\|b\|_\ast\cdot\nu_{\vec{w}}(B(y,r))^{1/{\alpha}-1/q}
\times\sum_{j=1}^\infty\Bigg\{\frac{\prod_{k=1}^mw_k(B(y,2^{j+1}r))^{1/{p_k}+1/{q_k}-1/{\alpha_k}} }{\nu_{\vec{w}}(B(y,2^{j+1}r))^{1/p}}\\
&\times\prod_{k=1}^m\bigg[w_k(B(y,2^{j+1}r))^{1/{\alpha_k}-1/{p_k}-1/{q_k}}
\big\|f_k\cdot\chi_{B(y,2^{j+1}r)}\big\|_{L^{p_k}(w_k)}\bigg]\Bigg\}\\
&\lesssim\|b\|_\ast\sum_{j=1}^\infty\Bigg\{\prod_{k=1}^m\bigg[w_k(B(y,2^{j+1}r))^{1/{\alpha_k}-1/{p_k}-1/{q_k}}
\big\|f_k\cdot\chi_{B(y,2^{j+1}r)}\big\|_{L^{p_k}(w_k)}\bigg]\\
&\times\frac{\nu_{\vec{w}}(B(y,r))^{1/{\alpha}-1/q}}{\nu_{\vec{w}}(B(y,2^{j+1}r))^{1/{\alpha}-1/q}}\Bigg\}.
\end{split}
\end{equation*}
Following the same arguments as in the proof of Theorem $\ref{mainthm:1}$, we can also deduce that for any $x\in B(y,r)$,
\begin{align}\label{pointwise3}
&\big|T_\theta((b-b_B)f^\infty_1,f^\infty_2,\dots,f^\infty_m)(x)\big|\\
&\lesssim\int_{(\mathbb R^n)^m\backslash B(y,2r)^m}
\frac{|(b(z_1)-b_{B})f_1(z_1)|\cdot|f_2(z_2)\cdots f_m(z_m)|}{(|x-z_1|+\cdots+|x-z_m|)^{mn}}\,dz_1\cdots dz_m\notag\\
&=\sum_{j=1}^\infty\int_{B(y,2^{j+1}r)^m\backslash B(y,2^{j}r)^m}
\frac{|(b(z_1)-b_{B})f_1(z_1)|\cdot|f_2(z_2)\cdots f_m(z_m)|}{(|x-z_1|+\cdots+|x-z_m|)^{mn}}\,dz_1\cdots dz_m\notag\\
&\lesssim\sum_{j=1}^\infty\bigg(\frac{1}{|B(y,2^{j+1}r)|^m}
\int_{B(y,2^{j+1}r)^m\backslash B(y,2^{j}r)^m}\big|(b(z_1)-b_{B})f_1(z_1)\big|\cdot\big|f_2(z_2)\cdots f_m(z_m)\big|\,dz_1\cdots dz_m\bigg)\notag\\
&\leq\sum_{j=1}^\infty\bigg(\frac{1}{|B(y,2^{j+1}r)|^m}\int_{B(y,2^{j+1}r)}\big|(b(z_1)-b_{B})f_1(z_1)\big|\,dz_1
\prod_{k=2}^m\int_{B(y,2^{j+1}r)}\big|f_k(z_k)\big|\,dz_k\bigg)\notag\\
&=\sum_{j=1}^\infty\bigg(\frac{1}{|B(y,2^{j+1}r)|}\int_{B(y,2^{j+1}r)}\big|(b(z_1)-b_{B})f_1(z_1)\big|\,dz_1\bigg)
\bigg(\prod_{k=2}^m\frac{1}{|B(y,2^{j+1}r)|}\int_{B(y,2^{j+1}r)}\big|f_k(z_k)\big|\,dz_k\bigg)\notag.
\end{align}
Then we have
\begin{align}\label{substiw1}
J^{\infty,\dots,\infty}_{\star\star}(y,r)&\lesssim\nu_{\vec{w}}(B(y,r))^{1/{\alpha}-1/q}\sum_{j=1}^\infty
\bigg(\frac{1}{|B(y,2^{j+1}r)|}\int_{B(y,2^{j+1}r)}\big|(b(z_1)-b_{B})f_1(z_1)\big|\,dz_1\bigg)\notag\\
&\times\bigg(\prod_{k=2}^m\frac{1}{|B(y,2^{j+1}r)|}\int_{B(y,2^{j+1}r)}\big|f_k(z_k)\big|\,dz_k\bigg).
\end{align}
For each $2\leq k\leq m$, by using H\"older's inequality with exponent $p_k$, we obtain that
\begin{equation*}
\begin{split}
&\int_{B(y,2^{j+1}r)}\big|f_k(z_k)\big|\,dz_k\\
&\leq
\bigg(\int_{B(y,2^{j+1}r)}\big|f_k(z_k)\big|^{p_k}w_k(z_k)\,dz_k\bigg)^{1/{p_k}}
\bigg(\int_{B(y,2^{j+1}r)}w_k(z_k)^{-p'_k/{p_k}}\,dz_k\bigg)^{1/{p'_k}}.
\end{split}
\end{equation*}
According to Lemma \ref{multi}, we have $w_k^{1-p'_k}=w_k^{-p'_k/{p_k}}\in A_{mp'_k}\subset A_{\infty}$, $k=1,2,\dots,m$. By using H\"older's inequality again with exponent $p_1$ and \eqref{j1cb}, we deduce that
\begin{equation*}
\begin{split}
&\int_{B(y,2^{j+1}r)}|(b(z_1)-b_{B})f_1(z_1)|\,dz_1\\
&\leq\bigg(\int_{B(y,2^{j+1}r)}\big|f_1(z_1)\big|^{p_1}w_1(z_1)\,dz_1\bigg)^{1/{p_1}}
\bigg(\int_{B(y,2^{j+1}r)}|b(z_1)-b_{B(y,r)}|^{p'_1}w_1(z_1)^{-p'_1/{p_1}}\,dz_1\bigg)^{1/{p'_1}}\\
&\lesssim\bigg(\int_{B(y,2^{j+1}r)}\big|f_1(z_1)\big|^{p_1}w_1(z_1)\,dz_1\bigg)^{1/{p_1}}
(j+1)\|b\|_*\cdot\bigg(\int_{B(y,2^{j+1}r)}w_1(z_1)^{-p'_1/{p_1}}\,dz_1\bigg)^{1/{p'_1}},
\end{split}
\end{equation*}
where the last inequality follows from the fact that $w_1^{-p'_1/{p_1}}\in A_{\infty}$.
Hence, from the above two estimates and the $A_{\vec{P}}$ condition on $\vec{w}$, it follows that \eqref{substiw1} is bounded by
\begin{equation*}
\begin{split}
&\|b\|_*\cdot\nu_{\vec{w}}(B(y,r))^{1/{\alpha}-1/q}\sum_{j=1}^\infty(j+1)\Bigg\{\prod_{k=1}^m
\frac{1}{|B(y,2^{j+1}r)|}\bigg(\int_{B(y,2^{j+1}r)}\big|f_k(z_k)\big|^{p_k}w_k(z_k)\,dz_k\bigg)^{1/{p_k}}\\
&\bigg(\int_{B(y,2^{j+1}r)}w_k(z_k)^{-p'_k/{p_k}}\,dz_k\bigg)^{1/{p'_k}}\Bigg\}\\
&\lesssim\|b\|_*\cdot\nu_{\vec{w}}(B(y,r))^{1/{\alpha}-1/q}\sum_{j=1}^\infty(j+1)
\Bigg\{\frac{1}{\nu_{\vec{w}}(B(y,2^{j+1}r))^{1/p}}\cdot
\prod_{k=1}^m\big\|f_k\cdot\chi_{B(y,2^{j+1}r)}\big\|_{L^{p_k}(w_k)}\Bigg\}.
\end{split}
\end{equation*}
Therefore, in view of the estimate \eqref{C1}, we conclude that
\begin{equation*}
\begin{split}
J^{\infty,\dots,\infty}_{\star\star}(y,r)
&\lesssim\|b\|_\ast\sum_{j=1}^\infty(j+1)\Bigg\{\prod_{k=1}^m\bigg[w_k(B(y,2^{j+1}r))^{1/{\alpha_k}-1/{p_k}-1/{q_k}}
\big\|f_k\cdot\chi_{B(y,2^{j+1}r)}\big\|_{L^{p_k}(w_k)}\bigg]\\
&\times\frac{\nu_{\vec{w}}(B(y,r))^{1/{\alpha}-1/q}}{\nu_{\vec{w}}(B(y,2^{j+1}r))^{1/{\alpha}-1/q}}\Bigg\}.
\end{split}
\end{equation*}
Summarizing the estimates for $J^{\infty,\dots,\infty}_{\star}(y,r)$ and $J^{\infty,\dots,\infty}_{\star\star}(y,r)$ derived above, we get
\begin{equation}\label{J2yr}
\begin{split}
J^{\infty,\dots,\infty}(y,r)&\lesssim\|b\|_\ast\sum_{j=1}^\infty(j+1)\Bigg\{\prod_{k=1}^m\bigg[w_k(B(y,2^{j+1}r))^{1/{\alpha_k}-1/{p_k}-1/{q_k}}
\big\|f_k\cdot\chi_{B(y,2^{j+1}r)}\big\|_{L^{p_k}(w_k)}\bigg]\\
&\times\frac{\nu_{\vec{w}}(B(y,r))^{1/{\alpha}-1/q}}{\nu_{\vec{w}}(B(y,2^{j+1}r))^{1/{\alpha}-1/q}}\Bigg\}.
\end{split}
\end{equation}
We now consider the case where exactly $\ell$ of the $\beta_k$ are $\infty$ for some $1\le\ell<m$. We only give the arguments for one of these cases. The rest are similar and can be easily obtained from the arguments below by permuting the indices. Meanwhile, we only consider the case $\beta_1=\infty$ here since the other case can be proved in the same way. We now estimate the term $J^{\beta_1,\dots,\beta_m}(y,r)$ when
\begin{equation*}
\beta_1=\cdots=\beta_{\ell}=\infty \quad \&\quad\beta_{\ell+1}=\cdots=\beta_m=0.
\end{equation*}
In the present situation, we first divide the term $J^{\beta_1,\dots,\beta_m}(y,r)$ into two parts as follows.
\begin{equation*}
\begin{split}
&J^{\beta_1,\dots,\beta_m}(y,r)\\
&\leq C\cdot\nu_{\vec{w}}(B(y,r))^{1/{\alpha}-1/p-1/q}
\bigg(\int_{B(y,r)}\big|[b(x)-b_B]\cdot T_\theta(f^\infty_1,\ldots,f^\infty_\ell,f^0_{\ell+1},\ldots,f^0_m)(x)\big|^p\nu_{\vec{w}}(x)\,dx\bigg)^{1/p}\\
&+C\cdot\nu_{\vec{w}}(B(y,r))^{1/{\alpha}-1/p-1/q}
\bigg(\int_{B(y,r)}\big|T_\theta((b-b_B)f^\infty_1,\ldots,f^\infty_\ell,f^0_{\ell+1},\ldots,f^0_m)(x)\big|^p
\nu_{\vec{w}}(x)\,dx\bigg)^{1/p}\\
&:=J^{\beta_1,\dots,\beta_m}_{\star}(y,r)+J^{\beta_1,\dots,\beta_m}_{\star\star}(y,r).
\end{split}
\end{equation*}
We estimate each term respectively. Recall that the following result has been proved in Theorem \ref{mainthm:1}(see \eqref{112}).
\begin{equation*}
\big|T_{\theta}(f^\infty_1,\ldots,f^\infty_\ell,f^0_{\ell+1},\ldots,f^0_m)(x)\big|
\lesssim\sum_{j=1}^\infty\bigg(\prod_{k=1}^m\frac{1}{|B(y,2^{j+1}r)|}\int_{B(y,2^{j+1}r)}\big|f_k(z_k)\big|\,dz_k\bigg).
\end{equation*}
Then it follows directly from $(ii)$ of Lemma \ref{BMO} that
\begin{equation*}
\begin{split}
J^{\beta_1,\dots,\beta_m}_{\star}(y,r)
&\leq C\cdot\nu_{\vec{w}}(B(y,r))^{1/{\alpha}-1/p-1/q}
\sum_{j=1}^\infty\bigg(\prod_{k=1}^m\frac{1}{|B(y,2^{j+1}r)|}
\int_{B(y,2^{j+1}r)}\big|f_k(z_k)\big|\,dz_k\bigg)\\
&\times\bigg(\int_{B(y,r)}\big|b(x)-b_{B(y,r)}\big|^p\nu_{\vec{w}}(x)\,dx\bigg)^{1/p}\\
&\leq C\|b\|_\ast\cdot\nu_{\vec{w}}(B(y,r))^{1/{\alpha}-1/q}
\sum_{j=1}^\infty\bigg(\prod_{k=1}^m\frac{1}{|B(y,2^{j+1}r)|}
\int_{B(y,2^{j+1}r)}\big|f_k(z_k)\big|\,dz_k\bigg).
\end{split}
\end{equation*}
We can now argue exactly as we did in the estimation of $I^{\infty,\dots,\infty}(y,r)$ to get that
\begin{equation*}
\begin{split}
J^{\beta_1,\dots,\beta_m}_{\star}(y,r)  &\lesssim\|b\|_\ast\sum_{j=1}^\infty\Bigg\{\prod_{k=1}^m\bigg[w_k(B(y,2^{j+1}r))^{1/{\alpha_k}-1/{p_k}-1/{q_k}}
\big\|f_k\cdot\chi_{B(y,2^{j+1}r)}\big\|_{L^{p_k}(w_k)}\bigg]\\
&\times\frac{\nu_{\vec{w}}(B(y,r))^{1/{\alpha}-1/q}}{\nu_{\vec{w}}(B(y,2^{j+1}r))^{1/{\alpha}-1/q}}\Bigg\}.
\end{split}
\end{equation*}
On the other hand, we will adopt the same method as in Theorem \ref{mainthm:1} and obtain
\begin{align}\label{pointwise5}
&\big|T_{\theta}((b-b_B)f^\infty_1,\ldots,f^\infty_\ell,f^0_{\ell+1},\ldots,f^0_m)(x)\big|\notag\\
&\lesssim\int_{(\mathbb R^n)^{\ell}\backslash B(y,2r)^{\ell}}\int_{B(y,2r)^{m-\ell}}
\frac{|(b(z_1)-b_{B})f_1(z_1)|\cdot|f_2(z_2)\cdots f_m(z_m)|}{(|x-z_1|+\cdots+|x-z_m|)^{mn}}\,dz_1\cdots dz_m\notag\\
&\lesssim\prod_{k=\ell+1}^m\int_{B(y,2r)}\big|f_k(z_k)\big|\,dz_k\notag\\
&\times\sum_{j=1}^\infty\frac{1}{|B(y,2^{j+1}r)|^m}\int_{B(y,2^{j+1}r)^\ell\backslash B(y,2^{j}r)^\ell}
|(b(z_1)-b_{B})f_1(z_1)|\cdot\big|f_2(z_2)\cdots f_{\ell}(z_\ell)\big|\,dz_1\cdots dz_\ell\notag\\
&\leq \prod_{k=\ell+1}^m\int_{B(y,2r)}\big|f_k(z_k)\big|\,dz_k\notag\\
&\times\sum_{j=1}^\infty\frac{1}{|B(y,2^{j+1}r)|^m}\int_{B(y,2^{j+1}r)}|(b(z_1)-b_{B})f_1(z_1)|\,dz_1
\prod_{k=2}^{\ell}\int_{B(y,2^{j+1}r)}\big|f_k(z_k)\big|\,dz_k\notag\\
&\leq \sum_{j=1}^\infty\bigg(\frac{1}{|B(y,2^{j+1}r)|^m}
\int_{B(y,2^{j+1}r)}|(b(z_1)-b_{B})f_1(z_1)|\,dz_1\prod_{k=2}^m\int_{B(y,2^{j+1}r)}\big|f_k(z_k)\big|\,dz_k\bigg),
\end{align}
where in the last inequality we have used the inclusion relation $2B\subseteq 2^{j+1}B$ with $j\in \mathbb{N}$. Repeating arguments as above show that
\begin{equation*}
\begin{split}
J^{\beta_1,\dots,\beta_m}_{\star\star}(y,r)
&\lesssim\|b\|_\ast\sum_{j=1}^\infty(j+1)\Bigg\{\prod_{k=1}^m\bigg[w_k(B(y,2^{j+1}r))^{1/{\alpha_k}-1/{p_k}-1/{q_k}}
\big\|f_k\cdot\chi_{B(y,2^{j+1}r)}\big\|_{L^{p_k}(w_k)}\bigg]\\
&\times\frac{\nu_{\vec{w}}(B(y,r))^{1/{\alpha}-1/q}}{\nu_{\vec{w}}(B(y,2^{j+1}r))^{1/{\alpha}-1/q}}\Bigg\}.
\end{split}
\end{equation*}
Summarizing the estimates derived above, we get
\begin{align}\label{J3yr}
J^{\beta_1,\dots,\beta_m}(y,r)
&\lesssim\|b\|_\ast\sum_{j=1}^\infty(j+1)\Bigg\{\prod_{k=1}^m\bigg[w_k(B(y,2^{j+1}r))^{1/{\alpha_k}-1/{p_k}-1/{q_k}}
\big\|f_k\cdot\chi_{B(y,2^{j+1}r)}\big\|_{L^{p_k}(w_k)}\bigg]\notag\\
&\times\frac{\nu_{\vec{w}}(B(y,r))^{1/{\alpha}-1/q}}{\nu_{\vec{w}}(B(y,2^{j+1}r))^{1/{\alpha}-1/q}}\Bigg\}.
\end{align}
Therefore, by taking the $L^q({\mu})$-norm of both sides of \eqref{J}(with respect to the variable $y$), and then using Minkowski's inequality($q\geq1$), \eqref{J1yr}, \eqref{J2yr} and \eqref{J3yr}, we have
\begin{equation*}
\begin{split}
&\Big\|\nu_{\vec{w}}(B(y,r))^{1/{\alpha}-1/p-1/q}
\big\|[b,T_\theta]_1(\vec{f})\cdot\chi_{B(y,r)}\big\|_{L^p(\nu_{\vec{w}})}\Big\|_{L^q({\mu})}\\
&\leq\Big\|J^{0,\dots,0}(y,r)\Big\|_{L^q({\mu})}+
\sum_{(\beta_1,\dots,\beta_m)\in\mathfrak{L}}\Big\|J^{\beta_1,\dots,\beta_m}(y,r)\Big\|_{L^q({\mu})}\\
&\leq C\bigg\|\prod_{k=1}^m\bigg[w_k(B(y,2r))^{1/{\alpha_k}-1/{p_k}-1/{q_k}}
\big\|f_k\cdot\chi_{B(y,2r)}\big\|_{L^{p_k}(w_k)}\bigg]\bigg\|_{L^q({\mu})}\\
&+C\cdot2^m\sum_{j=1}^\infty\bigg\|\prod_{k=1}^m\bigg[w_k(B(y,2^{j+1}r))^{1/{\alpha_k}-1/{p_k}-1/{q_k}}
\big\|f_k\cdot\chi_{B(y,2^{j+1}r)}\big\|_{L^{p_k}(w_k)}\bigg]\bigg\|_{L^q({\mu})}\\
&\times(j+1)\left(\frac{|B(y,r)|}{|B(y,2^{j+1}r)|}\right)^{\delta(1/{\alpha}-1/q)},
\end{split}
\end{equation*}
where in the last step we have used the estimate \eqref{psi1}. Another application of H\"older's inequality gives us that
\begin{equation*}
\begin{split}
&\Big\|\nu_{\vec{w}}(B(y,r))^{1/{\alpha}-1/p-1/q}
\big\|[b,T_\theta]_1(\vec{f})\cdot\chi_{B(y,r)}\big\|_{L^p(\nu_{\vec{w}})}\Big\|_{L^q({\mu})}\\
&\leq C\prod_{k=1}^m\Big\|w_k(B(y,2r))^{1/{\alpha_k}-1/{p_k}-1/{q_k}}
\big\|f_k\cdot\chi_{B(y,2r)}\big\|_{L^{p_k}(w_k)}\Big\|_{L^{q_k}({\mu})}\\
&+C\sum_{j=1}^\infty\prod_{k=1}^m\Big\|w_k(B(y,2^{j+1}r))^{1/{\alpha_k}-1/{p_k}-1/{q_k}}
\big\|f_k\cdot\chi_{B(y,2^{j+1}r)}\big\|_{L^{p_k}(w_k)}\Big\|_{L^{q_k}({\mu})}\\
&\times(j+1)\left(\frac{|B(y,r)|}{|B(y,2^{j+1}r)|}\right)^{\delta(1/{\alpha}-1/q)}\\
&\leq C\prod_{k=1}^m\big\|f_k\big\|_{(L^{p_k},L^{q_k})^{\alpha_k}(w_k;\mu)}+
C\prod_{k=1}^m\big\|f_k\big\|_{(L^{p_k},L^{q_k})^{\alpha_k}(w_k;\mu)}
\times\sum_{j=1}^\infty(j+1)\left[\frac{1}{2^{(j+1)n}}\right]^{\delta(1/{\alpha}-1/q)}\\
&\leq C\prod_{k=1}^m\big\|f_k\big\|_{(L^{p_k},L^{q_k})^{\alpha_k}(w_k;\mu)},
\end{split}
\end{equation*}
where the last series is convergent since $\delta>0$ and $1/{\alpha}-1/q>0$. We end the proof by taking the supremum over all $r>0$.
\end{proof}

\begin{proof}[Proof of Theorem $\ref{mainthm:4}$]
Given $\vec{f}=(f_1,f_2,\dots,f_m)$, for any fixed ball $B=B(y,r)$ in $\mathbb R^n$, as before, we split each $f_k$ as
\begin{equation*}
f_k=f_k^0+f_k^{\infty},~~k=1,2,\dots,m,
\end{equation*}
where $f_k^0=f_k\cdot\chi_{2B}$, $f_k^{\infty}=f_k\cdot\chi_{(2B)^{\complement}}$ and $2B=B(y,2r)\subset\mathbb R^n$. Again, we only need to consider here the multilinear commutator with only one symbol by linearity; that is, fix $b\in \mathrm{BMO}(\mathbb R^n)$ and consider the commutator operator
\begin{equation*}
\big[b,T_\theta\big]_1(\vec{f})(x)=b(x)\cdot T_\theta(f_1,f_2,\dots,f_m)(x)-T_{\theta}(bf_1,f_2,\dots,f_m)(x).
\end{equation*}
Let $\mathfrak{L}$ be the same as before. Then for any given $\lambda>0$, by using Lemma \ref{WMin}($N=2^m$), one can write
\begin{align}\label{Jprime}
&\nu_{\vec{w}}(B(y,r))^{1/{\alpha}-m-1/q}\cdot \Big[\nu_{\vec{w}}\big(\big\{x\in B(y,r):\big|\big[{b},T_\theta\big]_1(\vec{f})(x)\big|>\lambda^m\big\}\big)\Big]^m\notag\\
\leq & C\cdot\nu_{\vec{w}}(B(y,r))^{1/{\alpha}-m-1/q}\cdot
\Big[\nu_{\vec{w}}\big(\big\{x\in B(y,r):\big|\big[{b},T_\theta\big]_1(f^0_1,\dots,f^0_m)(x)\big|>\lambda^m/{2^m}\big\}\big)\Big]^m\notag\\
+&C\sum_{(\beta_1,\dots,\beta_m)\in\mathfrak{L}}\nu_{\vec{w}}(B(y,r))^{1/{\alpha}-m-1/q}
\Big[\nu_{\vec{w}}\big(\big\{x\in B(y,r):\big|\big[{b},T_\theta\big]_1(f^{\beta_1}_1,\ldots,f^{\beta_m}_m)(x)\big|>\lambda^m/{2^m}\big\}\big)\Big]^m\notag\\
:=&J^{0,\dots,0}_\ast(y,r)+\sum_{(\beta_1,\dots,\beta_m)\in\mathfrak{L}} J^{\beta_1,\dots,\beta_m}_\ast(y,r).
\end{align}
Observe that the Young function $\Phi(t)=t\cdot(1+\log^+t)$ satisfies the doubling condition, that is, there is a constant $C_{\Phi}>0$ such that for every $t>0$,
\begin{equation*}
\Phi(2t)\leq C_{\Phi}\,\Phi(t).
\end{equation*}
This fact together with Theorem \ref{Wcomm} and inequality \eqref{main esti1} implies
\begin{equation*}
\begin{split}
J^{0,\dots,0}_\ast(y,r)&\leq C\cdot\nu_{\vec{w}}(B(y,r))^{1/{\alpha}-m-1/q}
\prod_{k=1}^m\bigg(\int_{\mathbb R^n}\Phi\bigg(\frac{2|f^0_k(x)|}{\lambda}\bigg)\cdot w_k(x)\,dx\bigg)\\
&\leq C\cdot\nu_{\vec{w}}(B(y,r))^{1/{\alpha}-m-1/q}
\prod_{k=1}^m\bigg(\int_{B(y,2r)}\Phi\bigg(\frac{|f_k(x)|}{\lambda}\bigg)\cdot w_k(x)\,dx\bigg)\\
&= C\cdot\nu_{\vec{w}}(B(y,r))^{1/{\alpha}-m-1/q}
\prod_{k=1}^m w_k(B(y,2r))\\
&\times\bigg[\frac{1}{w_k(B(y,2r))}\int_{B(y,2r)}\Phi\bigg(\frac{|f_k(x)|}{\lambda}\bigg)\cdot w_k(x)\,dx\bigg]\\
&\leq C\cdot\nu_{\vec{w}}(B(y,r))^{1/{\alpha}-m-1/q}
\prod_{k=1}^m w_k(B(y,2r))
\cdot\bigg\|\Phi\bigg(\frac{|f_k|}{\lambda}\bigg)\bigg\|_{L\log L(w_k),B(y,2r)}.
\end{split}
\end{equation*}
Since $\vec{w}=(w_1,\ldots,w_m)\in A_{(1,\dots,1)}$, by definition, we know that
\begin{equation}\label{A11}
\bigg(\frac{1}{|\mathcal{B}|}\int_{\mathcal{B}}\nu_{\vec{w}}(x)\,dx\bigg)^{m}
\leq C\prod_{k=1}^m\inf_{x\in \mathcal{B}}w_k(x)
\end{equation}
holds for any ball $\mathcal{B}=\mathcal{B}(y,r)$ in $\mathbb R^n$ with $y\in\mathbb R^n$ and $r>0$, where $\nu_{\vec{w}}=\prod_{k=1}^m w_k^{1/{m}}$. We can rewrite this inequality as
\begin{equation*}
\begin{split}
\bigg(\frac{1}{|\mathcal{B}|}\int_{\mathcal{B}}\nu_{\vec{w}}(x)\,dx\bigg)
&\leq C\bigg(\prod_{k=1}^m\inf_{x\in \mathcal{B}}w_k(x)\bigg)^{1/m}
=C\bigg(\prod_{k=1}^m\inf_{x\in \mathcal{B}}w_k(x)^{1/m}\bigg)\\
&\leq C\bigg(\inf_{x\in \mathcal{B}}\prod_{k=1}^mw_k(x)^{1/m}\bigg)=C\cdot\inf_{x\in \mathcal{B}}\nu_{\vec{w}}(x),
\end{split}
\end{equation*}
which means that $\nu_{\vec{w}}\in A_1$. Moreover, for each $w_k$, $k=1,2,\dots,m$, it is easy to see that
\begin{equation*}
\begin{split}
\bigg(\prod_{j\neq k}\inf_{x\in \mathcal{B}}w_j(x)^{1/{m}}\bigg)^m\bigg(\frac{1}{|\mathcal{B}|}\int_{\mathcal{B}}w_k(x)^{1/{m}}\,dx\bigg)^{m}
&\leq\bigg(\frac{1}{|\mathcal{B}|}\int_{\mathcal{B}}w_k(x)^{1/{m}}\cdot\prod_{j\neq k}w_j(x)^{1/{m}}\,dx\bigg)^{m}\\
&\leq C\prod_{j=1}^m\inf_{x\in \mathcal{B}}w_j(x).
\end{split}
\end{equation*}
Also observe that
\begin{equation*}
\bigg(\prod_{j\neq k}\inf_{x\in \mathcal{B}}w_j(x)^{1/{m}}\bigg)^m=\prod_{j\neq k}\inf_{x\in \mathcal{B}}w_j(x).
\end{equation*}
From this, it follows that
\begin{equation*}
\bigg(\frac{1}{|\mathcal{B}|}\int_{\mathcal{B}}w_k(x)^{1/{m}}\,dx\bigg)^{m}\leq C\cdot\inf_{x\in \mathcal{B}}w_k(x),
\end{equation*}
which implies that $w_k^{1/{m}}\in A_1$ ($k=1,2,\dots,m$). Recall that for any ball $\mathcal{B}=\mathcal{B}(y,r)$ in $\mathbb R^n$, the following result holds (taking $p_1=\cdots=p_m=1$ and $p=1/m$ in \eqref{C1}):
\begin{equation}\label{C2}
\prod_{k=1}^m w_k(\mathcal{B}(y,r))^{1+1/{q_k}-1/{\alpha_k}}
\lesssim\nu_{\vec{w}}(\mathcal{B}(y,r))^{m+1/q-1/{\alpha}}.
\end{equation}
Hence,
\begin{equation*}
\begin{split}
J^{0,\dots,0}_\ast(y,r)
&\leq C\cdot\nu_{\vec{w}}(B(y,r))^{1/{\alpha}-m-1/q}
\prod_{k=1}^m w_k(B(y,2r))^{1+1/{q_k}-1/{\alpha_k}}\\
&\times\prod_{k=1}^m\bigg[w_k(B(y,2r))^{1/{\alpha_k}-1/{q_k}}
\bigg\|\Phi\bigg(\frac{|f_k|}{\lambda}\bigg)\bigg\|_{L\log L(w_k),B(y,2r)}\bigg]\\
&\leq C\cdot\frac{\nu_{\vec{w}}(B(y,2r))^{m+1/q-1/{\alpha}}}{\nu_{\vec{w}}(B(y,r))^{m+1/q-1/{\alpha}}}\\
&\times\prod_{k=1}^m\bigg[w_k(B(y,2r))^{1/{\alpha_k}-1/{q_k}}
\bigg\|\Phi\bigg(\frac{|f_k|}{\lambda}\bigg)\bigg\|_{L\log L(w_k),B(y,2r)}\bigg].
\end{split}
\end{equation*}
Moreover, since $\nu_{\vec{w}}$ is in $A_{1}$ and $m+1/q-1/{\alpha}>0$, then by inequality \eqref{weights}, we have
\begin{equation*}
\begin{split}
J^{0,\dots,0}_\ast(y,r)
&\leq C\prod_{k=1}^m\bigg[w_k(B(y,2r))^{1/{\alpha_k}-1/{q_k}}
\bigg\|\Phi\bigg(\frac{|f_k|}{\lambda}\bigg)\bigg\|_{L\log L(w_k),B(y,2r)}\bigg].
\end{split}
\end{equation*}
To estimate the other terms $J^{\beta_1,\dots,\beta_m}_\ast(y,r)$ for $(\beta_1,\dots,\beta_m)\in\mathfrak{L}$, we remark that for any $x\in B(y,r)$,
\begin{equation*}
\big[b,T_\theta\big]_1(\vec{f})(x)=[b(x)-b_B]\cdot T_\theta(f_1,f_2,\dots,f_m)(x)-T_{\theta}((b-b_B)f_1,f_2,\dots,f_m)(x).
\end{equation*}
So we can further decompose $J^{\beta_1,\dots,\beta_m}_\ast(y,r)$ as
\begin{equation*}
\begin{split}
&J^{\beta_1,\dots,\beta_m}_\ast(y,r)\\
\leq&C\nu_{\vec{w}}(B(y,r))^{1/{\alpha}-m-1/q}
\Big[\nu_{\vec{w}}\Big(\Big\{x\in B(y,r):\big|[b(x)-b_B]\cdot T_\theta(f^{\beta_1}_1,f^{\beta_2}_2,\ldots,f^{\beta_m}_m)(x)\big|>\lambda^m/{2^{m+1}}\Big\}\Big)\Big]^m\\
+&C\nu_{\vec{w}}(B(y,r))^{1/{\alpha}-m-1/q}\cdot
\Big[\nu_{\vec{w}}\Big(\Big\{x\in B(y,r):\big|T_{\theta}((b-b_B)f^{\beta_1}_1,f^{\beta_2}_2,\dots,f^{\beta_m}_m)(x)\big|>\lambda^m/{2^{m+1}}\Big\}\Big)\Big]^m\\
:=&\widetilde{J}^{\beta_1,\dots,\beta_m}_\star(y,r)+\widetilde{J}^{\beta_1,\dots,\beta_m}_{\star\star}(y,r).
\end{split}
\end{equation*}
By using the previous pointwise estimates \eqref{111} and \eqref{112} together with Chebyshev's inequality, we can deduce that
\begin{equation*}
\begin{split}
\widetilde{J}^{\beta_1,\dots,\beta_m}_\star(y,r)&\leq C\nu_{\vec{w}}(B(y,r))^{1/{\alpha}-m-1/q}\\
&\times\frac{2^{m+1}}{\lambda^m}
\bigg(\int_{B(y,r)}\big|[b(x)-b_{B(y,r)}]\cdot T_\theta(f^{\beta_1}_1,f^{\beta_2}_2,\ldots,f^{\beta_m}_m)(x)\big|^{\frac{\,1\,}{m}}\nu_{\vec{w}}(x)\,dx\bigg)^m\\
&\leq C\cdot\nu_{\vec{w}}(B(y,r))^{1/{\alpha}-m-1/q}
\sum_{j=1}^\infty\bigg(\prod_{k=1}^m\frac{1}{|B(y,2^{j+1}r)|}
\int_{B(y,2^{j+1}r)}\frac{|f_k(z_k)|}{\lambda}\,dz_k\bigg)\\
&\times\bigg(\int_{B(y,r)}\big|b(x)-b_{B(y,r)}\big|^{\frac{\,1\,}{m}}\nu_{\vec{w}}(x)\,dx\bigg)^m.
\end{split}
\end{equation*}
Now we claim that for $2\leq m\in\mathbb{N}$ and $\nu_{\vec{w}}\in A_1$,
\begin{equation}\label{assertion}
\bigg(\int_{B(y,r)}\big|b(x)-b_{B(y,r)}\big|^{\frac{\,1\,}{m}}\nu_{\vec{w}}(x)\,dx\bigg)^m\lesssim\|b\|_*\cdot\nu_{\vec{w}}(B(y,r))^{m}.
\end{equation}
Taking this claim momentarily for granted, then we have
\begin{equation*}
\widetilde{J}^{\beta_1,\dots,\beta_m}_\star(y,r)\lesssim\|b\|_*\cdot\nu_{\vec{w}}(B(y,r))^{1/{\alpha}-1/q}
\sum_{j=1}^\infty\bigg(\prod_{k=1}^m\frac{1}{|B(y,2^{j+1}r)|}
\int_{B(y,2^{j+1}r)}\frac{|f_k(z_k)|}{\lambda}\,dz_k\bigg).
\end{equation*}
Furthermore, note that $t\leq\Phi(t)=t\cdot(1+\log^+t)$ for any $t>0$. It then follows from the multiple $A_{(1,\dots,1)}$ condition \eqref{A11} and the previous estimate \eqref{main esti1} that
\begin{equation*}
\begin{split}
\widetilde{J}^{\beta_1,\dots,\beta_m}_\star(y,r)
&\lesssim\|b\|_*\cdot\nu_{\vec{w}}(B(y,r))^{1/{\alpha}-1/q}\\
&\times\sum_{j=1}^\infty\prod_{k=1}^m\bigg(\frac{1}{|B(y,2^{j+1}r)|}
\int_{B(y,2^{j+1}r)}\frac{|f_k(z_k)|}{\lambda}\cdot w_k(z_k)\,dz_k\bigg)
\left(\inf_{z_k\in B(y,2^{j+1}r)}w_k(z_k)\right)^{-1}\\
&\lesssim\|b\|_*\cdot\nu_{\vec{w}}(B(y,r))^{1/{\alpha}-1/q}\\
&\times\sum_{j=1}^\infty\frac{1}{\nu_{\vec{w}}(B(y,2^{j+1}r))^m}
\prod_{k=1}^m\int_{B(y,2^{j+1}r)}\Phi\bigg(\frac{|f_k(z_k)|}{\lambda}\bigg)\cdot w_k(z_k)\,dz_k\\
&\lesssim\|b\|_*\cdot\nu_{\vec{w}}(B(y,r))^{1/{\alpha}-1/q}\\
&\times\sum_{j=1}^\infty\frac{1}{\nu_{\vec{w}}(B(y,2^{j+1}r))^m}
\prod_{k=1}^mw_k\big(B(y,2^{j+1}r)\big)\bigg\|\Phi\bigg(\frac{|f_k|}{\lambda}\bigg)\bigg\|_{L\log L(w_k),B(y,2^{j+1}r)}\\
&=\|b\|_\ast\cdot\nu_{\vec{w}}(B(y,r))^{1/{\alpha}-1/q}
\sum_{j=1}^\infty\frac{1}{\nu_{\vec{w}}(B(y,2^{j+1}r))^m}\prod_{k=1}^m w_k\big(B(y,2^{j+1}r)\big)^{1+1/{q_k}-1/{\alpha_k}}\\
&\times\prod_{k=1}^m\bigg[w_k\big(B(y,2^{j+1}r)\big)^{1/{\alpha_k}-1/{q_k}}
\bigg\|\Phi\bigg(\frac{|f_k|}{\lambda}\bigg)\bigg\|_{L\log L(w_k),B(y,2^{j+1}r)}\bigg].
\end{split}
\end{equation*}
Therefore, in view of \eqref{C2} and \eqref{compare}, we conclude that
\begin{align}\label{WJ2yr}
\widetilde{J}^{\beta_1,\dots,\beta_m}_\star(y,r)
&\lesssim\|b\|_\ast\cdot\sum_{j=1}^\infty\bigg\{\prod_{k=1}^m\bigg[w_k\big(B(y,2^{j+1}r)\big)^{1/{\alpha_k}-1/{q_k}}
\bigg\|\Phi\bigg(\frac{|f_k|}{\lambda}\bigg)\bigg\|_{L\log L(w_k),B(y,2^{j+1}r)}\bigg]\notag\\
&\times\frac{\nu_{\vec{w}}(B(y,r))^{1/{\alpha}-1/q}}{\nu_{\vec{w}}(B(y,2^{j+1}r))^{1/{\alpha}-1/q}}\bigg\}\notag\\
&\lesssim\|b\|_\ast\cdot\sum_{j=1}^\infty\bigg\{\prod_{k=1}^m\bigg[w_k\big(B(y,2^{j+1}r)\big)^{1/{\alpha_k}-1/{q_k}}
\bigg\|\Phi\bigg(\frac{|f_k|}{\lambda}\bigg)\bigg\|_{L\log L(w_k),B(y,2^{j+1}r)}\bigg]\notag\\
&\times\left(\frac{|B(y,r)|}{|B(y,2^{j+1}r)|}\right)^{\delta(1/{\alpha}-1/q)}.
\end{align}
Let us return to the proof of \eqref{assertion}. Since $\nu_{\vec{w}}\in A_1$, we know that $\nu_{\vec{w}}$ belongs to the reverse H\"{o}lder class $RH_s$ for some $1<s<\infty$(see \cite{duoand} and \cite{grafakos3}). Here the reverse H\"{o}lder class is defined in the following way: $\omega\in RH_s$, if there is a constant $C>0$ such that
\begin{equation*}
\bigg(\frac{1}{|B|}\int_B\omega(x)^s\,dx\bigg)^{1/s}\leq C\bigg(\frac{1}{|B|}\int_B\omega(x)\,dx\bigg).
\end{equation*}
This fact together with H\"older's inequality implies that
\begin{equation*}
\begin{split}
&\int_{B(y,r)}\big|b(x)-b_{B(y,r)}\big|^{\frac{\,1\,}{m}}\nu_{\vec{w}}(x)\,dx\\
&\leq|B(y,r)|\bigg(\frac{1}{|B(y,r)|}\int_{B(y,r)}\big|b(x)-b_{B(y,r)}\big|^{\frac{s'}{m}}\,dx\bigg)^{1/s'}
\bigg(\frac{1}{|B(y,r)|}\int_{B(y,r)}\nu_{\vec{w}}(x)^s\,dx\bigg)^{1/s}\\
&\leq C\nu_{\vec{w}}(B(y,r))\bigg(\frac{1}{|B(y,r)|}\int_{B(y,r)}\big|b(x)-b_{B(y,r)}\big|^{\frac{s'}{m}}\,dx\bigg)^{1/s'}.
\end{split}
\end{equation*}
This can be done by considering the following two cases: $s'/m<1$ and $s'/m\geq1$. If $s'/m<1$, then \eqref{assertion} holds by using H\"older's inequality again. If $s'/m\geq1$, then \eqref{assertion} holds by applying $(ii)$ of Lemma \ref{BMO}. On the other hand, applying the pointwise estimates \eqref{pointwise3},\eqref{pointwise5} and Chebyshev's inequality, we have
\begin{equation*}
\begin{split}
\widetilde{J}^{\beta_1,\dots,\beta_m}_{\star\star}(y,r)
&\leq C\cdot\nu_{\vec{w}}(B(y,r))^{1/{\alpha}-m-1/q}\\
&\times\frac{2^{m+1}}{\lambda^m}
\bigg(\int_{B(y,r)}\big|T_\theta((b-b_{B(y,r)})f^{\beta_1}_1,f^{\beta_2}_2,\ldots,f^{\beta_m}_m)(x)\big|^{\frac{\,1\,}{m}}
\nu_{\vec{w}}(x)\,dx\bigg)^m\\
&\leq C\cdot\nu_{\vec{w}}(B(y,r))^{1/{\alpha}-1/q}
\sum_{j=1}^\infty\bigg(\prod_{k=2}^m\frac{1}{|B(y,2^{j+1}r)|}
\int_{B(y,2^{j+1}r)}\frac{|f_k(z_k)|}{\lambda}\,dz_k\bigg)\\
&\times\bigg(\frac{1}{|B(y,2^{j+1}r)|}\int_{B(y,2^{j+1}r)}
\big|b(z_1)-b_{B(y,r)}\big|\cdot\frac{|f_1(z_1)|}{\lambda}\,dz_1\bigg)\\
&\leq C\cdot\nu_{\vec{w}}(B(y,r))^{1/{\alpha}-1/q}
\sum_{j=1}^\infty\bigg(\prod_{k=2}^m\frac{1}{|B(y,2^{j+1}r)|}
\int_{B(y,2^{j+1}r)}\frac{|f_k(z_k)|}{\lambda}w_k(z_k)\,dz_k\bigg)\\
&\times\bigg(\frac{1}{|B(y,2^{j+1}r)|}\int_{B(y,2^{j+1}r)}
\big|b(z_1)-b_{B(y,r)}\big|\cdot\frac{|f_1(z_1)|}{\lambda}w_1(z_1)\,dz_1\bigg)\\
&\times\prod_{k=1}^m\left(\inf_{z_k\in B(y,2^{j+1}r)}w_k(z_k)\right)^{-1}\\
&\leq C\cdot\nu_{\vec{w}}(B(y,r))^{1/{\alpha}-1/q}
\times\sum_{j=1}^\infty\frac{1}{\nu_{\vec{w}}(B(y,2^{j+1}r))^m}
\bigg(\prod_{k=2}^m\int_{B(y,2^{j+1}r)}\frac{|f_k(z_k)|}{\lambda}w_k(z_k)\,dz_k\bigg)\\
&\times\bigg(\int_{B(y,2^{j+1}r)}\big|b(z_1)-b_{B(y,r)}\big|\cdot\frac{|f_1(z_1)|}{\lambda}w_1(z_1)\,dz_1\bigg),
\end{split}
\end{equation*}
where in the last inequality we have used \eqref{A11}. In addition, by \eqref{main esti1} and the fact that $t\leq \Phi(t)$, we get
\begin{equation*}
\begin{split}
&\int_{B(y,2^{j+1}r)}\frac{|f_k(z_k)|}{\lambda}w_k(z_k)\,dz_k\\
&\leq\int_{B(y,2^{j+1}r)}\Phi\bigg(\frac{|f_k(z_k)|}{\lambda}\bigg)\cdot w_k(z_k)\,dz_k\\
&\leq w_k\big(B(y,2^{j+1}r)\big)\bigg\|\Phi\bigg(\frac{|f_k|}{\lambda}\bigg)\bigg\|_{L\log L(w_k),B(y,2^{j+1}r)},
\end{split}
\end{equation*}
and by the inequality \eqref{Wholder} and the fact that $t\leq \Phi(t)$, we thus obtain
\begin{equation*}
\begin{split}
&\int_{B(y,2^{j+1}r)}\big|b(z_1)-b_{B(y,r)}\big|\cdot\frac{|f_1(z_1)|}{\lambda}w_1(z_1)\,dz_1\\
&\leq\int_{B(y,2^{j+1}r)}\big|b(z_1)-b_{B(y,r)}\big|\cdot\Phi\bigg(\frac{|f_1(z_1)|}{\lambda}\bigg)w_1(z_1)\,dz_1\\
&\leq C\cdot w_1\big(B(y,2^{j+1}r)\big)
\Big\|b-b_{B(y,r)}\Big\|_{\exp L(w_1),B(y,2^{j+1}r)}
\bigg\|\Phi\bigg(\frac{|f_1|}{\lambda}\bigg)\bigg\|_{L\log L(w_1),B(y,2^{j+1}r)}.
\end{split}
\end{equation*}
Furthermore, by the estimate \eqref{Jensen} and the assumption $w_1\in A_{\infty}$, the last expression is dominated by
\begin{equation*}
\begin{split}
&C(j+1)\|b\|_\ast\cdot w_1\big(B(y,2^{j+1}r)\big)
\bigg\|\Phi\bigg(\frac{|f_1|}{\lambda}\bigg)\bigg\|_{L\log L(w_1),B(y,2^{j+1}r)}.
\end{split}
\end{equation*}
Consequently, by combining the above two estimates, we conclude that
\begin{equation*}
\begin{split}
\widetilde{J}^{\beta_1,\dots,\beta_m}_{\star\star}(y,r)
&\leq C\|b\|_\ast\nu_{\vec{w}}(B(y,r))^{1/{\alpha}-1/q}\\
&\times\sum_{j=1}^\infty
\left\{\frac{(j+1)}{\nu_{\vec{w}}(B(y,2^{j+1}r))^m}
\prod_{k=1}^mw_k\big(B(y,2^{j+1}r)\big)\bigg\|\Phi\bigg(\frac{|f_k|}{\lambda}\bigg)\bigg\|_{L\log L(w_k),B(y,2^{j+1}r)}\right\}\\
&=C\|b\|_\ast\nu_{\vec{w}}(B(y,r))^{1/{\alpha}-1/q}
\sum_{j=1}^\infty\frac{(j+1)}{\nu_{\vec{w}}(B(y,2^{j+1}r))^m}
\prod_{k=1}^m w_k\big(B(y,2^{j+1}r)\big)^{1+1/{q_k}-1/{\alpha_k}}\\
&\times\prod_{k=1}^m\bigg[w_k\big(B(y,2^{j+1}r)\big)^{1/{\alpha_k}-1/{q_k}}
\bigg\|\Phi\bigg(\frac{|f_k|}{\lambda}\bigg)\bigg\|_{L\log L(w_k),B(y,2^{j+1}r)}\bigg].
\end{split}
\end{equation*}
By using \eqref{C2} and \eqref{compare} again, we thus obtain
\begin{align}\label{WJ3yr}
\widetilde{J}^{\beta_1,\dots,\beta_m}_{\star\star}(y,r)
&\lesssim\|b\|_\ast\cdot\sum_{j=1}^\infty
\Bigg\{\prod_{k=1}^m\bigg[w_k\big(B(y,2^{j+1}r)\big)^{1/{\alpha_k}-1/{q_k}}
\bigg\|\Phi\bigg(\frac{|f_k|}{\lambda}\bigg)\bigg\|_{L\log L(w_k),B(y,2^{j+1}r)}\bigg]\notag\\
&\times\big(j+1\big)\frac{\nu_{\vec{w}}(B(y,r))^{1/{\alpha}-1/q}}{\nu_{\vec{w}}(B(y,2^{j+1}r))^{1/{\alpha}-1/q}}\Bigg\}\notag\\
&\lesssim\|b\|_\ast\cdot\sum_{j=1}^\infty
\Bigg\{\prod_{k=1}^m\bigg[w_k\big(B(y,2^{j+1}r)\big)^{1/{\alpha_k}-1/{q_k}}
\bigg\|\Phi\bigg(\frac{|f_k|}{\lambda}\bigg)\bigg\|_{L\log L(w_k),B(y,2^{j+1}r)}\bigg]\notag\\
&\times\big(j+1\big)\left(\frac{|B(y,r)|}{|B(y,2^{j+1}r)|}\right)^{\delta(1/{\alpha}-1/q)}.
\end{align}
Therefore by taking the $L^q({\mu})$-norm of both sides of \eqref{Jprime}(with respect to the variable $y$), and then using Minkowski's inequality($q\geq1$), \eqref{WJ2yr} and \eqref{WJ3yr}, we have
\begin{equation*}
\begin{split}
&\Big\|\nu_{\vec{w}}(B(y,r))^{1/{\alpha}-m-1/q}\cdot \Big[\nu_{\vec{w}}\big(\big\{x\in B(y,r):\big|\big[{b},T_\theta\big]_1(\vec{f})(x)\big|>\lambda^m\big\}\big)\Big]^m\Big\|_{L^q({\mu})}\\
&\leq\Big\|J^{0,\dots,0}_\ast(y,r)\Big\|_{L^q({\mu})}+
\sum_{(\beta_1,\dots,\beta_m)\in\mathfrak{L}}\Big\|J^{\beta_1,\dots,\beta_m}_\ast(y,r)\Big\|_{L^q({\mu})}\\
&\leq C\bigg\|\prod_{k=1}^m
\bigg[w_k(B(y,2r))^{1/{\alpha_k}-1/{q_k}}
\bigg\|\Phi\bigg(\frac{|f_k|}{\lambda}\bigg)\bigg\|_{L\log L(w_k),B(y,2r)}\bigg]\bigg\|_{L^q({\mu})}\\
&+C\cdot2^m\sum_{j=1}^\infty\bigg\|\prod_{k=1}^m\bigg[w_k\big(B(y,2^{j+1}r)\big)^{1/{\alpha_k}-1/{q_k}}
\bigg\|\Phi\bigg(\frac{|f_k|}{\lambda}\bigg)\bigg\|_{L\log L(w_k),B(y,2^{j+1}r)}\bigg]\bigg\|_{L^q({\mu})}\\
&\times\big(j+1\big)\left(\frac{|B(y,r)|}{|B(y,2^{j+1}r)|}\right)^{\delta(1/{\alpha}-1/q)}.
\end{split}
\end{equation*}
Another application of H\"older's inequality gives us that
\begin{equation*}
\begin{split}
&\Big\|\nu_{\vec{w}}(B(y,r))^{1/{\alpha}-m-1/q}\cdot\Big[\nu_{\vec{w}}\big(\big\{x\in B(y,r):\big|\big[{b},T_\theta\big]_1(\vec{f})(x)\big|>\lambda^m\big\}\big)\Big]^m\Big\|_{L^q(\mu)}\\
&\leq C\prod_{k=1}^m\bigg\|w_k(B(y,2r))^{1/{\alpha_k}-1/{q_k}}
\bigg\|\Phi\bigg(\frac{|f_k|}{\lambda}\bigg)\bigg\|_{L\log L(w_k),B(y,2r)}\bigg\|_{L^{q_k}(\mu)}\\
&+C\sum_{j=1}^\infty\prod_{k=1}^m\bigg\|w_k\big(B(y,2^{j+1}r)\big)^{1/{\alpha_k}-1/{q_k}}
\bigg\|\Phi\bigg(\frac{|f_k|}{\lambda}\bigg)\bigg\|_{L\log L(w_k),B(y,2^{j+1}r)}\bigg\|_{L^{q_k}(\mu)}\\
&\times\big(j+1\big)\left(\frac{|B(y,r)|}{|B(y,2^{j+1}r)|}\right)^{\delta(1/{\alpha}-1/q)}\\
&\leq C\prod_{k=1}^m\bigg\|\Phi\bigg(\frac{|f_k|}{\lambda}\bigg)\bigg\|_{(L\log L,L^{q_k})^{\alpha_k}(w_k;\mu)}\\
&+C\prod_{k=1}^m\bigg\|\Phi\bigg(\frac{|f_k|}{\lambda}\bigg)\bigg\|_{(L\log L,L^{q_k})^{\alpha_k}(w_k;\mu)}
\times\sum_{j=1}^\infty
\big(j+1\big)\left(\frac{|B(y,r)|}{|B(y,2^{j+1}r)|}\right)^{\delta(1/{\alpha}-1/q)}\\
&\leq C\prod_{k=1}^m\bigg\|\Phi\bigg(\frac{|f_k|}{\lambda}\bigg)\bigg\|_{(L\log L,L^{q_k})^{\alpha_k}(w_k;\mu)},
\end{split}
\end{equation*}
where the last inequality holds since $\delta(1/{\alpha}-1/q)>0$. This completes the proof of Theorem \ref{mainthm:4}.
\end{proof}
For the iterated commutator $\big[\Pi\vec{b},T_\theta\big]$, we can also establish the following results in the same manner as in Theorems \ref{mainthm:3} and \ref{mainthm:4}. The proof then needs appropriate but minor modifications and we leave this to the reader.
\begin{theorem}\label{mainthm:5}
Let $m\geq2$ and $\big[\Pi\vec{b},T_\theta\big]$ be the iterated commutator of $\theta$-type Calder\'on--Zygmund operator $T_{\theta}$ with $\theta$ satisfying the condition \eqref{theta1} and $\vec{b}\in\mathrm{BMO}^m$. Assume that $1<p_k\leq\alpha_k<q_k<\infty$, $k=1,2,\ldots,m$ and $p\in(1/m,\infty)$ with $1/p=\sum_{k=1}^m 1/{p_k}$, $q\in[1,\infty)$ with $1/q=\sum_{k=1}^m 1/{q_k}$ and $1/{\alpha}=\sum_{k=1}^m 1/{\alpha_k};$ $\vec{w}=(w_1,\ldots,w_m)\in A_{\vec{P}}$ with $w_1,\ldots,w_m\in A_\infty$ and $\mu\in\Delta_2$. Assume further that \eqref{supp} holds. Then there exists a constant $C>0$ such that for all $\vec{f}=(f_1,\ldots,f_m)\in(L^{p_1},L^{q_1})^{\alpha_1}(w_1;\mu)\times\cdots
\times(L^{p_m},L^{q_m})^{\alpha_m}(w_m;\mu)$,
\begin{equation*}
\big\|\big[\Pi\vec{b},T_\theta\big](\vec{f})\big\|_{(L^p,L^q)^{\alpha}(\nu_{\vec{w}};\mu)}\leq C\prod_{k=1}^m\big\|f_k\big\|_{(L^{p_k},L^{q_k})^{\alpha_k}(w_k;\mu)}
\end{equation*}
with $\nu_{\vec{w}}=\prod_{k=1}^m w_k^{p/{p_k}}$.
\end{theorem}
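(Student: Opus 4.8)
The plan is to follow the proof of Theorem $\ref{mainthm:3}$ almost line by line, the one essential difference being that the iterated structure of $\big[\Pi\vec{b},T_\theta\big]$ does not permit a reduction to a single BMO symbol, so all $m$ oscillations $b_k(x)-b_k(y_k)$ must be transported simultaneously. First I would fix a ball $B=B(y,r)\subset\mathbb R^n$, decompose each $f_k=f^0_k+f^{\infty}_k$ with $f^0_k=f_k\cdot\chi_{2B}$, $f^{\infty}_k=f_k\cdot\chi_{(2B)^{\complement}}$ and $2B=B(y,2r)$, and split $\big[\Pi\vec{b},T_\theta\big](\vec{f})$ by multilinearity into the ``local'' term $\big[\Pi\vec{b},T_\theta\big](f^0_1,\dots,f^0_m)$ plus the tail terms indexed by $(\beta_1,\dots,\beta_m)\in\mathfrak{L}$, exactly as in $\eqref{J}$ (here $\mathfrak{L}$ is the index set appearing there). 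For the local term I would use the second inequality of Theorem $\ref{comm2}$ — which is valid already under the weaker hypothesis $\eqref{theta1}$ — in place of Theorem $\ref{comm}$, and then argue verbatim as in $\eqref{J1yr}$, inserting $\eqref{C1}$ and $\eqref{doubling}$ to bound it by $\prod_{k=1}^m\big[w_k(B(y,2r))^{1/{\alpha_k}-1/{p_k}-1/{q_k}}\|f_k\cdot\chi_{B(y,2r)}\|_{L^{p_k}(w_k)}\big]$.

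For a fixed tail term $(\beta_1,\dots,\beta_m)\in\mathfrak{L}$ I would write, for $x\in B$, $b_k(x)-b_k(y_k)=\big(b_k(x)-(b_k)_B\big)-\big(b_k(y_k)-(b_k)_B\big)$ and expand $\prod_{k=1}^m[b_k(x)-b_k(y_k)]$ into $2^m$ sub-terms indexed by subsets $S\subseteq\{1,\dots,m\}$: the factors $b_k(x)-(b_k)_B$ with $k\in S$ leave the integral, while the factors $b_j(y_j)-(b_j)_B$ with $j\notin S$ are absorbed into $f^{\beta_j}_j$. Applying the size condition $\eqref{size}$ on the annuli $B(y,2^{j+1}r)^{m}\setminus B(y,2^{j}r)^{m}$ exactly as in $\eqref{111}$ and $\eqref{112}$, then H\"older's inequality, the multiple $A_{\vec{P}}$ condition on $\vec{w}$ (recall that $w_j^{1-p'_j}\in A_\infty$ by Lemma $\ref{multi}$), and the weighted BMO estimate $\eqref{j1cb}$ for every $j\notin S$ to control the averages $\frac{1}{|B(y,2^{j+1}r)|}\int_{B(y,2^{j+1}r)}|b_j(z_j)-(b_j)_B|\,|f_j(z_j)|\,dz_j$, I would obtain, for $x\in B$, the pointwise estimate
\[
\Big|T_\theta\big(\dots,(b_j-(b_j)_B)f^{\beta_j}_j,\dots\big)(x)\Big|
\lesssim\sum_{j=1}^\infty(j+1)^{m-|S|}\,\frac{\prod_{k=1}^m\big\|f_k\cdot\chi_{B(y,2^{j+1}r)}\big\|_{L^{p_k}(w_k)}}{\nu_{\vec{w}}(B(y,2^{j+1}r))^{1/p}}.
\]
Multiplying by $\prod_{k\in S}|b_k(x)-(b_k)_B|$ and integrating over $B$ against $\nu_{\vec{w}}\in A_{mp}\subset A_\infty$, iterated H\"older's inequality together with $(ii)$ of Lemma $\ref{BMO}$ (and Jensen's inequality in the range $p|S|<1$) yields $\big(\int_B\prod_{k\in S}|b_k-(b_k)_B|^p\,\nu_{\vec{w}}\big)^{1/p}\lesssim\prod_{k\in S}\|b_k\|_*\cdot\nu_{\vec{w}}(B(y,r))^{1/p}$.

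Summing the $2^m$ sub-terms — each of which carries exactly $\prod_{k=1}^m\|b_k\|_*$ — and inserting $\eqref{C1}$, I would reach for every $(\beta_1,\dots,\beta_m)\in\mathfrak{L}$ the analogue of $\eqref{J3yr}$, namely a bound of the form $\prod_{k=1}^m\|b_k\|_*\sum_{j=1}^\infty(j+1)^m\prod_{k=1}^m\big[w_k(B(y,2^{j+1}r))^{1/{\alpha_k}-1/{p_k}-1/{q_k}}\|f_k\cdot\chi_{B(y,2^{j+1}r)}\|_{L^{p_k}(w_k)}\big]\cdot\frac{\nu_{\vec{w}}(B(y,r))^{1/{\alpha}-1/q}}{\nu_{\vec{w}}(B(y,2^{j+1}r))^{1/{\alpha}-1/q}}$, the only change from $\eqref{J3yr}$ being the polynomial weight $(j+1)^m$ instead of $(j+1)$. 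Finally I would take the $L^q(\mu)$-norm in $y$ of the splitting $\eqref{J}$, apply Minkowski's inequality (legitimate since $q\ge1$), then H\"older's inequality with $1/q=\sum_k1/{q_k}$ and the definitions of the norms $\|f_k\|_{(L^{p_k},L^{q_k})^{\alpha_k}(w_k;\mu)}$, together with the geometric decay $\eqref{psi1}$; the resulting series $\sum_j(j+1)^m\big[2^{-(j+1)n}\big]^{\delta(1/{\alpha}-1/q)}$ converges since $\delta(1/{\alpha}-1/q)>0$, and a supremum over $r>0$ completes the argument.

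The main obstacle is the bookkeeping imposed by the iterated structure: one must split each of the $m$ oscillations against the ball average, track which of the $2^m$ resulting factors escape the integral (picking up a weighted $L^p(\nu_{\vec{w}})$ norm of a product of up to $m$ BMO functions) and which are folded into the $f^{\beta_j}_j$ (each costing a factor $j+1$ through $\eqref{j1cb}$), and then keep all these estimates compatible. Controlling the product of oscillations $\prod_{k\in S}|b_k-(b_k)_B|$ in the weighted $L^p$ norm is where the $A_\infty$ character of $\nu_{\vec{w}}$ — via Lemma $\ref{BMO}$, and, for the corresponding weak-endpoint statement, the $\exp L(\omega)$ estimate $\eqref{BMOwang}$ and the generalized H\"older inequality $\eqref{Wholder}$ — is indispensable. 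The appearance of $(j+1)^m$ rather than $(j+1)$ is a harmless by-product, since the genuine geometric decay of $\nu_{\vec{w}}(B(y,r))/\nu_{\vec{w}}(B(y,2^{j+1}r))$ coming from $\nu_{\vec{w}}\in A_\infty$ dominates any fixed power of $j+1$. Beyond these points the proof is a direct transcription of the arguments already carried out for Theorems $\ref{mainthm:1}$ and $\ref{mainthm:3}$.
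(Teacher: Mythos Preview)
Your proposal is correct and matches what the paper intends: the paper does not write out a proof of Theorem \ref{mainthm:5} but merely states that it ``can also be established in the same manner as in Theorems \ref{mainthm:3} and \ref{mainthm:4}'' with ``appropriate but minor modifications,'' and the modifications you supply --- handling all $m$ symbols simultaneously via the $2^m$-term expansion of $\prod_{k=1}^m\big[(b_k(x)-(b_k)_B)-(b_k(y_k)-(b_k)_B)\big]$, invoking the iterated-commutator part of Theorem \ref{comm2} for the local piece, and accepting the harmless polynomial factor $(j+1)^m$ in place of $(j+1)$ --- are exactly the expected ones.
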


\begin{theorem}\label{mainthm:6}
Let $m\geq2$ and $\big[\Pi\vec{b},T_\theta\big]$ be the iterated commutator of $\theta$-type Calder\'on--Zygmund operator $T_{\theta}$ with $\theta$ satisfying the condition \eqref{theta3} and $\vec{b}\in \mathrm{BMO}^m$. Assume that $p_k=1$, $1\leq\alpha_k<q_k<\infty$, $k=1,2,\ldots,m$ and $p=1/m$, $q\in[1,\infty)$ with $1/q=\sum_{k=1}^m 1/{q_k}$ and $1/{\alpha}=\sum_{k=1}^m 1/{\alpha_k};$ $\vec{w}=(w_1,\ldots,w_m)\in A_{(1,\dots,1)}$ with $w_1,\ldots,w_m\in A_\infty$ and $\mu\in\Delta_2$. Assume further that \eqref{supp} holds. Then for any given $\lambda>0$ and any ball $B(y,r)\subset\mathbb R^n$ with $(y,r)\in\mathbb R^n\times(0,+\infty)$, there exists a constant $C>0$ independent of $\vec{f}=(f_1,\ldots,f_m)$, $B(y,r)$ and $\lambda$ such that
\begin{equation*}
\begin{split}
&\Big\|\nu_{\vec{w}}(B(y,r))^{1/{\alpha}-m-1/q}\cdot\Big[\nu_{\vec{w}}\Big(\Big\{x\in B(y,r):\big|\big[\Pi\vec{b},T_\theta\big](\vec{f})(x)\big|>\lambda^m\Big\}\Big)\Big]^m\Big\|_{L^q(\mu)}\\
&\leq C\cdot\prod_{k=1}^m\bigg\|\Phi^{(m)}\bigg(\frac{|f_k|}{\lambda}\bigg)\bigg\|_{(L\log L,L^{q_k})^{\alpha_k}(w_k;\mu)},
\end{split}
\end{equation*}
where $\nu_{\vec{w}}=\prod_{k=1}^m w_k^{1/{m}}$, $\Phi(t)=t\cdot(1+\log^+t)$ and $\Phi^{(m)}=\overbrace{\Phi\circ\cdots\circ\Phi}^m$. Here the norm $\|\cdot\|_{L^q(\mu)}$ is taken with respect to the variable $y$, i.e.,
\begin{equation*}
\begin{split}
&\Big\|\nu_{\vec{w}}(B(y,r))^{1/{\alpha}-m-1/q}\cdot\Big[\nu_{\vec{w}}\Big(\Big\{x\in B(y,r):\big|\big[\Pi\vec{b},T_\theta\big](\vec{f})(x)\big|>\lambda^m\Big\}\Big)\Big]^m\Big\|_{L^q(\mu)}\\
=&\left\{\int_{\mathbb R^n}\bigg[\nu_{\vec{w}}(B(y,r))^{1/{\alpha}-m-1/q}\cdot\Big[\nu_{\vec{w}}\Big(\Big\{x\in B(y,r):\big|\big[\Pi\vec{b},T_\theta\big](\vec{f})(x)\big|>\lambda^m\Big\}\Big)\Big]^m\bigg]^q\mu(y)\,dy\right\}^{1/q}.
\end{split}
\end{equation*}
\end{theorem}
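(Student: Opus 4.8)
The plan is to adapt the proof of Theorem~\ref{mainthm:4}. Two changes are needed. First, the single‑symbol reduction available for $[\Sigma\vec b,T_\theta]$ is not available for $[\Pi\vec b,T_\theta]$, so it is replaced by the standard expansion of the product kernel factor $\prod_{k=1}^m[b_k(x)-b_k(y_k)]$ around the averages $(b_k)_B$. Second, the endpoint input Theorem~\ref{Wcomm} is replaced by Theorem~\ref{Wcommwh}, which is precisely why the stronger growth hypothesis \eqref{theta3} on $\theta$ is imposed here. Fix a ball $B=B(y,r)\subset\mathbb R^n$ and split each $f_k=f_k^0+f_k^\infty$ with $f_k^0=f_k\chi_{2B}$ as in the previous proofs. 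Using the multilinearity of $[\Pi\vec b,T_\theta]$ together with Lemma~\ref{WMin} ($N=2^m$) to distribute the level $\lambda^m$, the quantity $\nu_{\vec w}\big(\{x\in B:|[\Pi\vec b,T_\theta](\vec f)(x)|>\lambda^m\}\big)$ is controlled by the ``diagonal'' contribution $J_\ast^{0,\dots,0}(y,r)$ of $[\Pi\vec b,T_\theta](f_1^0,\dots,f_m^0)$ plus the ``off‑diagonal'' contributions indexed by the tuples $(\beta_1,\dots,\beta_m)\in\mathfrak L$, exactly as in the splitting \eqref{Jprime}.

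For the diagonal piece I would apply Theorem~\ref{Wcommwh} directly to $[\Pi\vec b,T_\theta](f_1^0,\dots,f_m^0)$, then use the doubling property of $\Phi^{(m)}$ (a composition of $m$ copies of the doubling Young function $\Phi$), the inequality \eqref{main esti1} to pass from $\|\cdot\|_{L(w_k)}$ to $\|\cdot\|_{L\log L(w_k)}$, the bound \eqref{C2} (the case $p_k=1$, $p=1/m$ of \eqref{C1}), and the doubling \eqref{weights} of $\nu_{\vec w}\in A_1$ together with $m+1/q-1/\alpha>0$; this yields
\[
J_\ast^{0,\dots,0}(y,r)\lesssim\prod_{k=1}^m w_k\big(B(y,2r)\big)^{1/\alpha_k-1/q_k}\Big\|\Phi^{(m)}\big(|f_k|/\lambda\big)\Big\|_{L\log L(w_k),B(y,2r)},
\]
in complete analogy with the estimate of $J^{0,\dots,0}_\ast$ in the proof of Theorem~\ref{mainthm:4}; note that $\nu_{\vec w}\in A_1$ and $w_k^{1/m}\in A_1$ follow from $\vec w\in A_{(1,\dots,1)}$ just as there.

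For a tuple $(\beta_1,\dots,\beta_m)\in\mathfrak L$ I would expand, for $x\in B$,
\[
\prod_{k=1}^m[b_k(x)-b_k(y_k)]=\sum_{S\subseteq\{1,\dots,m\}}(-1)^{m-|S|}\prod_{k\in S}\big(b_k(x)-(b_k)_B\big)\prod_{k\notin S}\big(b_k(y_k)-(b_k)_B\big),
\]
so that $[\Pi\vec b,T_\theta](f_1^{\beta_1},\dots,f_m^{\beta_m})(x)$ is a finite sum, over $S$, of terms $\prod_{k\in S}(b_k(x)-(b_k)_B)\cdot T_\theta(g_1^{\beta_1},\dots,g_m^{\beta_m})(x)$, where $g_k^{\beta_k}=f_k^{\beta_k}$ for $k\in S$ and $g_k^{\beta_k}=(b_k-(b_k)_B)f_k^{\beta_k}$ for $k\notin S$. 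Since at least one $\beta_k=\infty$, the annular argument used for \eqref{111}, \eqref{112} (now carrying the extra factor $|b_k(z_k)-(b_k)_B|$ in the slots $k\notin S$, and using $2B\subseteq B(y,2^{j+1}r)$ on the finite slots) produces the analogue of the pointwise bounds \eqref{pointwise3}, \eqref{pointwise5}: for $x\in B$,
\[
\big|T_\theta(g_1^{\beta_1},\dots,g_m^{\beta_m})(x)\big|\lesssim\sum_{j=1}^\infty\prod_{k=1}^m\frac1{|B(y,2^{j+1}r)|}\int_{B(y,2^{j+1}r)}\big|b_k(z_k)-(b_k)_B\big|^{\mathbf 1_{k\notin S}}|f_k(z_k)|\,dz_k.
\]
Now apply Chebyshev's inequality with the critical exponent $1/m$ to the corresponding level set; the factor $\prod_{k\in S}|b_k(x)-(b_k)_B|^{1/m}$ is absorbed by the product version of the claim \eqref{assertion}, namely $\big(\int_B\prod_{k\in S}|b_k(x)-(b_k)_B|^{1/m}\nu_{\vec w}(x)\,dx\big)^m\lesssim\nu_{\vec w}(B)^m$, which follows from $\nu_{\vec w}\in A_1\subset RH_s$, H\"older's inequality and Lemma~\ref{BMO}$(ii)$, exactly as \eqref{assertion} was proved. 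In each annular factor I would then insert the weight $w_k$ and bound $|f_k|/\lambda\le\Phi^{(m)}(|f_k|/\lambda)$: for $k\in S$ use \eqref{main esti1}; for $k\notin S$ use the weighted generalized H\"older inequality \eqref{Wholder} together with $\|b_k-(b_k)_B\|_{\exp L(w_k),B(y,2^{j+1}r)}\lesssim(j+1)\|b_k\|_\ast$ from \eqref{Jensen}. Multiplying over $k$ and invoking the multiple $A_{(1,\dots,1)}$ condition \eqref{A11} in the form $\prod_{k=1}^m\inf_{B(y,2^{j+1}r)}w_k\gtrsim\big(\nu_{\vec w}(B(y,2^{j+1}r))/|B(y,2^{j+1}r)|\big)^m$ converts the powers of $|B(y,2^{j+1}r)|^{-1}$ into $\nu_{\vec w}(B(y,2^{j+1}r))^{-m}$; finally \eqref{C2} and the $A_\infty$‑comparison \eqref{compare} (as in \eqref{psi1}) give, for each such $(\beta_1,\dots,\beta_m,S)$,
\[
J_\ast^{\beta,S}(y,r)\lesssim\sum_{j=1}^\infty(j+1)^m\Big(\frac1{2^{(j+1)n}}\Big)^{\delta(1/\alpha-1/q)}\prod_{k=1}^m w_k\big(B(y,2^{j+1}r)\big)^{1/\alpha_k-1/q_k}\Big\|\Phi^{(m)}\big(|f_k|/\lambda\big)\Big\|_{L\log L(w_k),B(y,2^{j+1}r)}.
\]

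To conclude, I would take the $L^q(\mu)$ norm in the variable $y$ of $J_\ast^{0,\dots,0}(y,r)+\sum_{(\beta_1,\dots,\beta_m)\in\mathfrak L}\sum_S J_\ast^{\beta,S}(y,r)$, apply Minkowski's inequality (legitimate since $q\ge1$) and then H\"older's inequality with $1/q=\sum_{k=1}^m1/q_k$; after replacing $r$ by $2^{j+1}r$ in the definition of the $L\log L$‑type amalgam space, each $k$‑factor is bounded by $\|\Phi^{(m)}(|f_k|/\lambda)\|_{(L\log L,L^{q_k})^{\alpha_k}(w_k;\mu)}$, and the remaining series $\sum_{j\ge1}(j+1)^m2^{-(j+1)n\delta(1/\alpha-1/q)}$ converges because $\delta(1/\alpha-1/q)>0$. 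Taking the supremum over $r>0$ then gives the stated inequality; all constants that depend on $\vec b$ (through $\|b_k\|_\ast$ and the $RH_s$ exponent) are harmless, since $C$ is only required to be independent of $\vec f$, $B(y,r)$ and $\lambda$. The delicate point is the off‑diagonal bookkeeping: producing the single Young function $\Phi^{(m)}$ uniformly over all subsets $S$ --- in particular when $S=\varnothing$, where all $m$ slots carry a BMO factor and \eqref{Wholder} must be used $m$ times --- while keeping the accumulated logarithmic factors of size $O((j+1)^m)$ so that the geometric decay in $j$ still wins; the diagonal term is where the hypothesis \eqref{theta3} is actually consumed, via Theorem~\ref{Wcommwh}.
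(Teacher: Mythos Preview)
Your proposal is correct and follows exactly the approach the paper intends: the paper does not write out a proof of Theorem~\ref{mainthm:6} but merely states that it is obtained ``in the same manner as in Theorems~\ref{mainthm:3} and~\ref{mainthm:4}'' with ``appropriate but minor modifications,'' and you have supplied precisely those modifications --- replacing the single-symbol reduction by the subset expansion of $\prod_{k}[b_k(x)-b_k(y_k)]$ around the averages $(b_k)_B$, invoking Theorem~\ref{Wcommwh} in place of Theorem~\ref{Wcomm} for the diagonal piece (which is where \eqref{theta3} enters), upgrading the claim \eqref{assertion} to products over $k\in S$ via the same $RH_s$/H\"older argument, and tracking the accumulated factor $(j+1)^{m-|S|}\le(j+1)^m$ so that the series $\sum_j(j+1)^m 2^{-(j+1)n\delta(1/\alpha-1/q)}$ still converges. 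One cosmetic remark: the final ``taking the supremum over $r>0$'' is unnecessary, since the statement is for each fixed ball $B(y,r)$ and the amalgam norm on the right already contains the supremum.
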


\begin{rek}
In light of Theorem \ref{mainthm:6}, we can say that the iterated commutator $\big[\Pi\vec{b},T_\theta\big]$ is bounded from $(L\log L,L^{q_1})^{\alpha_1}(w_1;\mu)\times\cdots\times(L\log L,L^{q_m})^{\alpha_m}(w_m;\mu)$ into $(WL^p,L^q)^{\alpha}(\nu_{\vec{w}};\mu)$ with $p=1/m$.
\end{rek}

Finally, in view of the relation \eqref{include}, we have the following results concerning weighted estimates for the multilinear commutators $\big[\Sigma\vec{b},T_\theta\big]$ and $\big[\Pi\vec{b},T_\theta\big]$.
\begin{corollary}
Let $m\geq2$ and $\vec{b}\in \mathrm{BMO}^m$. Assume that $1<p_k\leq\alpha_k<q_k<\infty$, $k=1,2,\ldots,m$ and $p\in(1/m,\infty)$ with $1/p=\sum_{k=1}^m 1/{p_k}$, $q\in[1,\infty)$ with $1/q=\sum_{k=1}^m 1/{q_k}$ and $1/{\alpha}=\sum_{k=1}^m 1/{\alpha_k};$ $\vec{w}=(w_1,\ldots,w_m)\in\prod_{k=1}^m A_{p_k}$ and $\mu\in\Delta_2$. In addition, assume that \eqref{supp} holds. Then both the multilinear commutator $\big[\Sigma\vec{b},T_\theta\big]$ and the iterated commutator $\big[\Pi\vec{b},T_\theta\big]$ satisfy
\begin{equation*}
\big\|\big[\Sigma\vec{b},T_\theta\big](\vec{f})\big\|_{(L^p,L^q)^{\alpha}(\nu_{\vec{w}};\mu)}\leq C\prod_{k=1}^m\big\|f_k\big\|_{(L^{p_k},L^{q_k})^{\alpha_k}(w_k;\mu)}
\end{equation*}
and
\begin{equation*}
\big\|\big[\Pi\vec{b},T_\theta\big](\vec{f})\big\|_{(L^p,L^q)^{\alpha}(\nu_{\vec{w}};\mu)}\leq C\prod_{k=1}^m\big\|f_k\big\|_{(L^{p_k},L^{q_k})^{\alpha_k}(w_k;\mu)}
\end{equation*}
with $\nu_{\vec{w}}=\prod_{k=1}^m w_k^{p/{p_k}}$, provided that $\theta$ satisfying the condition \eqref{theta1}.
\end{corollary}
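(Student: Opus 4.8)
The plan is to deduce this corollary directly from Theorems~\ref{mainthm:3} and~\ref{mainthm:5}, using only the inclusion relation~\eqref{include}. The sole discrepancy between the hypotheses stated here and those of Theorems~\ref{mainthm:3} and~\ref{mainthm:5} is that the latter require, separately, that $\vec{w}=(w_1,\ldots,w_m)\in A_{\vec{P}}$ and that $w_1,\ldots,w_m\in A_\infty$, whereas here we assume the single stronger condition $\vec{w}\in\prod_{k=1}^m A_{p_k}$; so the first task is to see that this stronger condition supplies both of the weaker ones at once.

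First I would record that, by H\"older's inequality, $\prod_{k=1}^m A_{p_k}\subset A_{\vec{P}}$, which is exactly~\eqref{includewh}, and that trivially $A_{p_k}\subset A_\infty$ for each $k$, whence $\prod_{k=1}^m A_{p_k}\subset\prod_{k=1}^m A_\infty$; combining these two inclusions yields~\eqref{include}. Consequently, from $\vec{w}\in\prod_{k=1}^m A_{p_k}$ we obtain simultaneously $\vec{w}\in A_{\vec{P}}$ and $w_k\in A_\infty$ for every $k=1,2,\dots,m$, which is precisely the weight hypothesis needed to run Theorems~\ref{mainthm:3} and~\ref{mainthm:5}.

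With this observation in hand, the remaining hypotheses match verbatim: $m\geq2$, $\theta$ satisfies~\eqref{theta1}, $\vec{b}\in\mathrm{BMO}^m$, the exponent relations $1<p_k\leq\alpha_k<q_k<\infty$ with $1/p=\sum_{k=1}^m 1/p_k$, $1/q=\sum_{k=1}^m 1/q_k$, $1/\alpha=\sum_{k=1}^m 1/\alpha_k$, the balance condition~\eqref{supp}, and $\mu\in\Delta_2$. Therefore Theorem~\ref{mainthm:3} applies and gives the asserted bound for $\big[\Sigma\vec{b},T_\theta\big]$, while Theorem~\ref{mainthm:5} applies and gives the asserted bound for $\big[\Pi\vec{b},T_\theta\big]$, in both cases with $\nu_{\vec{w}}=\prod_{k=1}^m w_k^{p/p_k}$ as in the corollary, since $p$ is determined there by the same relation $1/p=\sum_{k=1}^m 1/p_k$.

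I expect there to be essentially no obstacle: the corollary is a soft consequence of the two theorems once~\eqref{include} is invoked. The only points deserving a line of care are checking that the definition of $\nu_{\vec{w}}$ coincides across the three statements (it does), and noting that Theorem~\ref{mainthm:5} for the iterated commutator is indeed available under the weaker growth condition~\eqref{theta1} rather than~\eqref{theta3}, which is exactly what is assumed here; this is consistent with the remark following Theorem~\ref{mainthm:5} and with the fact that Theorem~\ref{comm2} already establishes the needed weighted Lebesgue estimate for $\big[\Pi\vec{b},T_\theta\big]$ under~\eqref{theta1}.
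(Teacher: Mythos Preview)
Your proposal is correct and matches the paper's approach exactly: the paper presents this corollary with no proof beyond the phrase ``in view of the relation~\eqref{include}'', and you have simply spelled out that one-line argument in full, invoking Theorems~\ref{mainthm:3} and~\ref{mainthm:5} once the inclusion $\prod_{k=1}^m A_{p_k}\subset A_{\vec{P}}\cap\prod_{k=1}^m A_\infty$ supplies the required weight hypotheses. One small inaccuracy: there is no remark immediately following Theorem~\ref{mainthm:5}; that theorem is already stated under the weaker condition~\eqref{theta1}, so your point stands without needing any remark.
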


\begin{corollary}
Let $m\geq2$ and $\vec{b}\in \mathrm{BMO}^m$. Assume that $p_k=1$, $1\leq\alpha_k<q_k<\infty$, $k=1,2,\ldots,m$ and $p=1/m$, $q\in[1,\infty)$ with $1/q=\sum_{k=1}^m 1/{q_k}$ and $1/{\alpha}=\sum_{k=1}^m 1/{\alpha_k};$ $\vec{w}=(w_1,\ldots,w_m)\in\prod_{k=1}^m A_1$ and $\mu\in\Delta_2$. In addition, assume that \eqref{supp} holds. Then for any given $\lambda>0$ and for any ball $B(y,r)$ with $y\in\mathbb R^n$ and $r>0$, there exists a constant $C>0$ independent of $\vec{f}=(f_1,\ldots,f_m)$, $B(y,r)$ and $\lambda$ such that $(\nu_{\vec{w}}=\prod_{k=1}^m w_k^{1/{m}})$
\begin{equation*}
\begin{split}
&\Big\|\nu_{\vec{w}}(B(y,r))^{1/{\alpha}-m-1/q}\cdot\Big[\nu_{\vec{w}}\Big(\Big\{x\in B(y,r):\big|\big[\Sigma\vec{b},T_\theta\big](\vec{f})(x)\big|>\lambda^m\Big\}\Big)\Big]^m\Big\|_{L^q(\mu)}\\
&\leq C\cdot\prod_{k=1}^m\bigg\|\Phi\bigg(\frac{|f_k|}{\lambda}\bigg)\bigg\|_{(L\log L,L^{q_k})^{\alpha_k}(w_k;\mu)},
\end{split}
\end{equation*}
provided that $\theta$ satisfies the condition \eqref{theta2}, and
\begin{equation*}
\begin{split}
&\Big\|\nu_{\vec{w}}(B(y,r))^{1/{\alpha}-m-1/q}\cdot\Big[\nu_{\vec{w}}\Big(\Big\{x\in B(y,r):\big|\big[\Pi\vec{b},T_\theta\big](\vec{f})(x)\big|>\lambda^m\Big\}\Big)\Big]^m\Big\|_{L^q(\mu)}\\
&\leq C\cdot\prod_{k=1}^m\bigg\|\Phi^{(m)}\bigg(\frac{|f_k|}{\lambda}\bigg)\bigg\|_{(L\log L,L^{q_k})^{\alpha_k}(w_k;\mu)},
\end{split}
\end{equation*}
provided that $\theta$ satisfies the condition \eqref{theta3}.
\end{corollary}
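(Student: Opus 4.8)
The plan is to obtain this corollary as an immediate consequence of Theorems \ref{mainthm:4} and \ref{mainthm:6}, once one observes that the hypothesis $\vec w=(w_1,\ldots,w_m)\in\prod_{k=1}^m A_1$ already encodes every weight condition required in those two theorems. So the proof will be purely a matter of bookkeeping on the hypotheses, with the analytic content entirely supplied by the results proved earlier in the paper.

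First I would specialize the inclusion \eqref{include} to the endpoint exponents $p_1=\cdots=p_m=1$. Since $A_{\vec P}$ with $\vec P=(1,\dots,1)$ is by definition $A_{(1,\dots,1)}$, the relation \eqref{include} reads $\prod_{k=1}^m A_1\subset A_{(1,\dots,1)}\cap\prod_{k=1}^m A_\infty$; the first containment is \eqref{includewh} (established via H\"older's inequality, as remarked just after it), and the second is trivial from $A_1\subset A_\infty$. Consequently, assuming $\vec w\in\prod_{k=1}^m A_1$ forces simultaneously $\vec w\in A_{(1,\dots,1)}$ and $w_1,\ldots,w_m\in A_\infty$, which are exactly the weight hypotheses appearing in Theorems \ref{mainthm:4} and \ref{mainthm:6}.

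Next I would note that all the remaining hypotheses of Theorem \ref{mainthm:4} are already part of the hypotheses of the corollary: $p_k=1$ and $1\le\alpha_k<q_k<\infty$ for each $k$, $p=1/m$, $1/q=\sum_{k=1}^m 1/q_k$, $1/\alpha=\sum_{k=1}^m 1/\alpha_k$, $\mu\in\Delta_2$, the balance condition \eqref{supp}, and — for the first displayed estimate — $\theta$ satisfying \eqref{theta2}. Hence Theorem \ref{mainthm:4} applies verbatim and yields the first inequality. Replacing \eqref{theta2} by the stronger condition \eqref{theta3} and applying Theorem \ref{mainthm:6} in exactly the same way produces the second inequality, now with the Young function $\Phi$ replaced by its $m$-fold iterate $\Phi^{(m)}=\Phi\circ\cdots\circ\Phi$. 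In both cases one takes the same ball $B(y,r)$ and the same $\lambda>0$, so the constants $C$ inherit the uniformity (independence of $\vec f$, $B(y,r)$, $\lambda$) asserted in those theorems.

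I do not expect any genuine obstacle: the only point deserving a line of verification is that the endpoint instance of \eqref{include} indeed holds, i.e.\ that membership in $\prod_{k=1}^m A_1$ yields both the multilinear condition $A_{(1,\dots,1)}$ and the individual $A_\infty$ conditions on the $w_k$ — and this is precisely \eqref{includewh} together with $A_1\subset A_\infty$, already recorded in the text. One may additionally remark, as after \eqref{includewh}, that the inclusion \eqref{include} is strict, so the corollary is strictly weaker than Theorems \ref{mainthm:4} and \ref{mainthm:6}; this observation is not needed for the proof itself.
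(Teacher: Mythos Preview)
Your proposal is correct and follows essentially the same approach as the paper: the corollary is stated there as an immediate consequence of Theorems~\ref{mainthm:4} and~\ref{mainthm:6} ``in view of the relation~\eqref{include}'', which is precisely the reduction you carry out. Your verification that the endpoint case of~\eqref{include} supplies both $\vec{w}\in A_{(1,\dots,1)}$ and $w_k\in A_\infty$ is exactly the point the paper invokes implicitly.
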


\subsection*{Acknowledgment}
This work was supported by the Natural Science Foundation of China (Grant No. XJEDU2020Y002 and 2022D01C407).

\end{document}